\theoremstyle{plain}
\newtheorem{theorem}{Theorem}
\newtheorem{corollary}[theorem]{Corollary}
\newtheorem{lemma}[theorem]{Lemma}
\newtheorem{proposition}[theorem]{Proposition}
\numberwithin{theorem}{section}
\theoremstyle{definition}
\newtheorem{definition}[theorem]{Definition}
\newtheorem{example}[theorem]{Example}
\theoremstyle{remark}
\newtheorem{remark}[theorem]{Remark}
\newtheorem{notation}[theorem]{Notation}
\title{Elliptic normal curves of even degree and theta functions}
\date{March 31, 2023}
\author{Masanobu Kaneko} 
\address{Faculty of Mathematics, 
Kyushu University, 
Motooka 744, Nishi-ku,
Fukuoka 819-0395, 
Japan}
\email{mkaneko@math.kyushu-u.ac.jp}
\author{Masato Kuwata}
\address{Faculty of Economics, 
Chuo University, 
742-1 Higashinakano, 
Hachioji-shi, Tokyo 192-0393, 
Japan}
\email{kuwata@tamacc.chuo-u.ac.jp}
\newcommand\N{\mathbf {N}} \newcommand\Z{\mathbf {Z}}
\renewcommand\P{\mathbf {P}}  
\newcommand\R{\mathbf {R}} \newcommand\C{\mathbf {C}}
\renewcommand\H{\mathfrak{H}}
\renewcommand\Im{\operatorname{Im}}
\newcommand{\Gal}{\operatorname{Gal}}
\newcommand{\Proj}{\operatorname{Proj}}
\newcommand{\Aut}{\operatorname{Aut}}
\newcommand{\Hom}{\operatorname{Hom}}
\newcommand{\Div}{\operatorname{Div}}
\renewcommand{\div}{\operatorname{div}}
\newcommand{\addition}{\operatornamewithlimits{\mathrm{Sum}}}
\newcommand{\Nul}{\operatorname{Nul}}
\newcommand{\e}{\mathbf{e}}
\def\<{\langle}\renewcommand\>{\rangle}
\def\sltwo(#1,#2;#3,#4){\mathchoice%
{\begin{pmatrix} #1 & #2 \\ #3 & #4 \end{pmatrix}}
{\left(\begin{smallmatrix}#1 & #2 \\ #3 & #4 \end{smallmatrix}\right)}
{\left(\begin{smallmatrix}#1 & #2 \\ #3 & #4 \end{smallmatrix}\right)}
{\left(\begin{smallmatrix}#1 & #2 \\ #3 & #4 \end{smallmatrix}\right)}
}
\def\tsltwo(#1,#2;#3,#4){\mathchoice%
{\left(\begin{smallmatrix}#1 & #2 \\ #3 & #4 \end{smallmatrix}\right)}
{\left(\begin{smallmatrix}#1 & #2 \\ #3 & #4 \end{smallmatrix}\right)}
{\left(\begin{smallmatrix}#1 & #2 \\ #3 & #4 \end{smallmatrix}\right)}
{\left(\begin{smallmatrix}#1 & #2 \\ #3 & #4 \end{smallmatrix}\right)}
}
\def\@fnsymbol#1{\ensuremath{\ifcase#1\or \dagger\or \ddagger\or
   \mathsection\or \mathparagraph\or \|\or \dagger\dagger
   \or \ddagger\ddagger \else\@ctrerr\fi}}\makeatother
\numberwithin{equation}{section}
\newcommand{\action}[1]{{}^{#1}\!}
\newcommand\ph{\varphi}
\newcommand{\dd}{D}
\renewcommand{\L}{\mathscr{L}}
\newcommand{\I}{\mathscr{I}}
\newcommand{\Sym}{\mathscr{S}}
\newcommand{\asterisk}{*}
\def\udots{\scalebox{-1}[1]{$\ddots$}}
\newcommand{\E}{E}
\begin{document}

\begin{abstract} 
An elliptic curve $E$ can be immersed in $\P^{N-1}$ as a curve of degree~$N$ by means of the linear system of $|NO|$, where $O$ is the origin of ~$E$. Well-known classical results going back to Bianchi and Klein say that if $N$ is odd, this immersion is uniquely determined by specifying a full-level $N$ structure. In this paper we show that if $N$ is even, uniqueness of immersion is ensured by specifying a level structure associated with a certain congruence subgroup between $\Gamma(N)$ and $\Gamma(2N)$. Moreover, we construct, over the complex number field, an immersion by means of suitably chosen theta functions, and write down the quadratic equations satisfied by them. 
\end{abstract}

\maketitle

\section{Introduction}

An elliptic normal curve of degree~$N$ means an elliptic curve $E$ together with an immersion into a projective space $E\hookrightarrow \P^{N-1}$ as a degree~$N$ curve that is contained in no hyperplane.  We assume $N\ge 4$, and that the base field $K$ is a field of characteristic not dividing~$N$. 
By the Riemann-Roch theorem, any elliptic curve $E$ over $K$ can be realized as an elliptic normal curve of degree~$N$ in $\P^{N-1}$ by means of a complete linear system $|D|$ with any effective divisor $D$ of degree~$N$.  In particular, we may take $D=NO$, where $O$ is the origin of the group structure of~$E$.  Moreover, it is known that the image of $E$ in $\P^{N-1}$ is defined by a system of $N(N-3)/2$ quadratic equations.

Classically, for the case $N=5$ over the complex number field $\C$, Bianchi~\cite{Bianchi} first wrote down defining equations of an elliptic curve in the following ``normal form'':
\[
E_{\phi} : 
\left\{\renewcommand{\arraystretch}{1.2}\begin{array}{l}
x_{0}^{2} + \phi x_{2}x_{3}-\phi^{-1}x_{1}x_{4} = 0, \\
x_{1}^{2} + \phi x_{3}x_{4}-\phi^{-1}x_{2}x_{0} = 0, \\
x_{2}^{2} + \phi x_{4}x_{0}-\phi^{-1}x_{3}x_{1} = 0, \\
x_{3}^{2} + \phi x_{0}x_{1}-\phi^{-1}x_{4}x_{2} = 0, \\
x_{4}^{2} + \phi x_{1}x_{2}-\phi^{-1}x_{0}x_{3} = 0.
\end{array}\right.
\]
These five quadratic equations define an elliptic curve $E_{\phi}$ as a projective curve of degree~$5$ in $\P^{4}$, and the translations by the $5$-torsion points are realized as linear transformations of the projective space.  The curve $E_{\phi}$ may be viewed as the universal elliptic curve over the modular curve parametrizing triples $(E,S,T)$ where $E$ is an elliptic curve and $(S,T)$ is a basis of $E[5]$ such that the Weil pairing $e_{5}(S,T)$ is equal to a fixed primitive $5$th root of unity $\zeta_{5}$.  The parameter $\phi=\phi(\tau)$ is a modular function  associated with the congruence subgroup $\Gamma(5)$.  Klein \cite{Klein} generalized Bianchi's results to obtain such a normal form of quadratic equations for general odd integers $N$, and studied their relation to modular functions of level~$N$. Later, V\'elu \cite{Velu:1978} studied the immersion of elliptic curves into $\P^{N-1}$ as elliptic normal curves of degree $N$ in terms of schemes. In the end, he constructed the modular curve $X(p)$ for odd prime number~$p$ as a subscheme in $\P^{p-1}$ over $\operatorname{Spec}\Z[1/p]$, and the universal elliptic curve $E(p)$ over $X(p)$. (See also \cite{Fisher:thesis}.) 

In the case where $N$ is even, Hurwitz \cite{Hurwitz} generalized Klein's results to a certain extent.  In this case, however, it is not possible to obtain the universal family associated with $\Gamma(N)$.  Hurwitz's family is over the modular curve associated with some congruence subgroup of level $2N$ or $4N$ depending on $N\bmod 4$.
In this paper we use the work of V\'elu \cite{Velu:1978}, and construct a slightly different immersion from that of Hurwitz based on an analysis of the level structures associated with congruence subgroups between $\Gamma(N)$ and $\Gamma(2N)$.  We obtain a canonical coordinate system compatible with the translations by the $N$-torsion points and the universal elliptic curve associated with a certain congruence subgroup.  

Over $\C$, we go one step further and realize the above immersion $E\to \P^{N-1}$ in terms of certain theta functions that become the coordinate functions of the system described above, and write down the quadratic equations satisfied by the image of $E$ using the classical relations among theta functions.

Let us describe our results in more detail.  Let $E$ be an elliptic curve over a field of characteristic not dividing~$N$, and consider the immersion $E\to \P^{N-1}$ using the complete linear system $|NO|$.  For an $N$-torsion point $T\in E[N]$, let us denote by $\tau_{T}$ the translation-by-$T$ map $P\mapsto P+T$.  It is easy to see that $\tau_{T}$ can be lifted to an automorphism of the ambient space $\P^{N-1}$.  By choosing a basis $(S,T)$ of $E[N]$, we can find a coordinate system such that the translations $\tau_{S}$ and $\tau_{T}$ are realized as linear transformations of the ambient space $\P^{N-1}$.  

If $N$ is odd, the choice of such a coordinate system is unique once we fix a $\Gamma(N)$ structure, that is, a basis $(S,T)$ of $E[N]$ such that $e_{N}(S,T)$ is equal to a fixed primitive $N$th root of unity $\zeta_{N}$ (Proposition~\ref{prop:proj-coord-odd}).  Thus, we obtain an immersion of the family of elliptic curves with $\Gamma(N)$ structure to a single $\P^{N-1}$, and by associating the origin of the elliptic curves, we obtain a morphism from the modular curve $X(N)$ to $\P^{N-1}$. 
  
The situation is not the same if $N$ is even.  In this case, the coordinate system mentioned above is not unique even if we specify a $\Gamma(N)$ structure. So, we define an intermediate subgroup $\Gamma^{(N)}(2N)$ between $\Gamma(N)$ and $\Gamma(2N)$ (see Definition~\ref{def:Gamma^(N)(2N)}), and show that the coordinate system mentioned above is determined uniquely once we fix a $\Gamma^{(N)}(2N)$ structure (Theorem~\ref{thm:proj-coord-even}).  As a consequence we obtain the universal elliptic curve over the modular curve associated with $\Gamma^{(N)}(2N)$.

Furthermore, over $\C$, we realize the immersion $E\hookrightarrow \P^{N-1}$ using the theta functions, denoted by $\theta_{k}^{(N)}(z,\tau)$ for $k=0,\dots,N-1$ (see Definition~\ref{def:theta-N}), that serve as the coordinate functions of the coordinate system described above, and we obtain quadratic equations of the image (Theorems~\ref{thm:eq-N-odd} and~\ref{thm:eq-N-even}) using the relations between the theta functions coming from Jacobi's identity \eqref{eq:Jacobi4} or \eqref{eq:Jacobi2}, which is essentially the addition formula for the elliptic curve.

In \S\S\ref{sec:level-4}--\ref{sec:level-8} we work out in detail the cases $N=4,6$, and $8$.  There, we show the explicit equations of the modular curves and the universal curves over~them.

Since our theta functions and Hurwitz's $\sigma$-functions \cite{Hurwitz} look quite different, it is not easy to see the exact relationship between them.  In Appendix~\ref{appendix}, we describe the connections and differences between these two sets of functions in detail.

\subsection*{Acknowledgements}
Kaneko was supported in part by JSPS KAKENHI Grant Numbers JP23340010, JP15K13428, JP16H06336. 
Kuwata was supported by JSPS KAKENHI Grant Numbers JP23540028, JP26400023, and by the Chuo University Grant for Special Research.  Part of this work was done while Kuwata was visiting Boston University.  
We also thank the referee for useful comments.

\section{Preliminaries}

From this section until \S\ref{sec:can-coord-system} we fix a positive integer $N$ greater than~$3$, and suppose that the base field $K$ is a field of characteristic not dividing $N$.  Let $K_{s}$ be a separable closure of $K$, and let $\Gamma_{K}=\Gal(K_{s}/K)$.  

Let $E$ be an elliptic curve defined over $K$. Denote by $E(K)$ the group of $K$-rational points of~$E$, and $E[N]=\{P\in E(K_{s})\mid NP=O\}$ the subgroup of $N$-torsion points. 
Since we use the notation and results of V\'elu \cite{Velu:1978} extensively, we give a summary of parts of \cite{Velu:1978} necessary for later use.

\subsection{The Weil pairing}\label{subsec:Weil_pairing}

We first fix some notation.
\begin{itemize}
\item $K(E)^{\times}$ : the multiplicative group of the function field $K(E)$ of $E$.
\item $\Div_{K}(E)$ : the group of $K$-rational divisors of $E$; i.e., the group of formal $\Z$-linear combination%
\footnote{For a point $P\in E(K_{s})$, we denote by $\{P\}$ the base of the formal sums associated with~$P$.  Thus, the sum $\{P\}+\{Q\}$ is a formal sum, while the sum in $\{P+Q\}$ means the addition in the elliptic curve~$E$.} 
 $D=\sum_{P\in E(K_{s})} n_{P}\{P\}$ with $n_{P}\in \Z$, such that $n_{P}=0$ for all but finitely many $P\in E(K_{s})$, and $n_{\, \action{\sigma}P}=n_{P}$ for all $\sigma\in \Gamma_{K}=\Gal(K_{s}/K)$.  
\item $\Div_{K}^{0}(E)$ : the kernel of $\deg:\Div_{K}(E)\to \Z$. 
\item $\Nul_{K}(E)$ : the kernel of the homomorphism $\addition:\Div_{K}(E)\to E(K)$ defined by $\sum_{P}n_{P}\{P\}\mapsto\addition\limits_{P}\,n_{P}P$, where $\addition$ means the addition in~$E$.
\item $P_{K}(E) = \Div_{K}^{0}(E)\cap \Nul_{K}(E)$.
\end{itemize}

We have an obvious homomorphism $\div:K(E)^{\times}\to \Div_K(E)$ that sends a function to its divisor, and the following is fundamental.

\begin{theorem}[Abel-Jacobi]\label{th:abel}
The following sequence is exact.
\begin{equation}\label{eq:Abel}
1 \longrightarrow K^{\times} \longrightarrow K(E)^{\times}
\overset{\div}{\longrightarrow} P_{K}(E) \longrightarrow 0.
\qedhere
\end{equation}
\end{theorem}

In this section and the next we assume $E[N]\subset E(K)$, so in particular, $K$ contains the $N$th roots of unity.
Let $G\subset E[N]$ be a cyclic group of order~$N$.  We consider \eqref{eq:Abel} as an exact sequence of $G$-modules. 
Define an action of $G$ on various groups as follows:
\begin{itemize}
\item trivially on $K^{\times}$, $\Z$, and $E(K)$.
\item by translation on $K(E)$; i.e., if $f\in K(E)$ and $T\in G$, define $\action{T}\!f(X)=f(X-T)$, where $X$ is a generic point of~$E$.
\item by translation on $\Div_{K}(E)$; i.e., if $\dd=\sum_{P}n_{P}\{P\}$ and $T\in G$, define 
\[
\action{T}\!\dd=\sum_{P}n_{P}\{P+T\}.  
\]
\end{itemize}
Clearly, the action of $G$ on $\Div_{K}(E)$ induces actions on $\Div_{K}^{0}(E)$ and $P_{K}(E)$.
The Weil pairing is obtained essentially as the connecting homomorphism $\delta$ of the long exact sequence of the group cohomology induced from \eqref{eq:Abel}:
\begin{equation}\label{eq:long-exact-seq}
K^{\times} \longrightarrow {K(E)^{\times}}^{G} \overset{\div}\longrightarrow P_{K}(E)^{G}
\overset{\delta}{\longrightarrow} H^{1}(G,K^{\times}).
\end{equation}

\begin{theorem}[Weil pairing]\label{thm:weil-pairing}
\begin{enumerate}
\item
The homomorphism $\delta$ in \eqref{eq:long-exact-seq} induces  a $\Gamma_{K}$-isomor\-phism $\Psi:G'=E[N]/G \overset{\simeq}{\longrightarrow}\Hom(G,\mu_{N})$.
\item
For $S'\in G'$ and $T\in G$, define $e_{G}(S',T)$ by
\[
e_{G}(S',T)=\Psi(S')(T).
\]
Then $e_{G}$ is a $\Gamma_{K}$ compatible non degenerate bilinear form $G'\times G\to \mu_{N}$.
\item
For $S'\in G'$, choose $S\in E[N]$ such that $S\bmod G=S'$, and define $\dd_{S}=\{S\}-\{O\}\in \Div_{K_{s}}^{0}(E)$.  Choose $f_{S}\in K_{s}(E)^{\times}$ such that 
\[
\div f_{S} = \sum_{T\in G}\action{T}\!\dd_{S} 
= \sum_{T\in G}\bigl(\{S+T\}-\{T\}\big).
\]
Then, we have%
\[
e_{G}(S',T)=\action{T}\!f_{S}/f_{S}=f_{S}(X-T)/f_{S}(X).
\]
\end{enumerate}
\end{theorem}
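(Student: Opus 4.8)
The plan is to produce the pairing over the separable closure as the composition of the connecting map $\delta$ with an explicit map $E[N]\to P_{K_{s}}(E)^{G}$, and to read off $\Gamma_{K}$-equivariance at the end by naturality; base-changing \eqref{eq:Abel} from $K$ to $K_{s}$ does not affect the formalism. First I would record two elementary facts. Since $G$ is cyclic of order $N$ acting trivially on $K_{s}^{\times}$, every $1$-cocycle is a homomorphism and every coboundary is trivial, so $H^{1}(G,K_{s}^{\times})=\Hom(G,K_{s}^{\times})=\Hom(G,\mu_{N})$, the last equality because a homomorphism out of a group of exponent $N$ has image in $\mu_{N}$. Next, the connecting map has the concrete shape: if $\dd\in P_{K_{s}}(E)^{G}$ is written $\dd=\div f$, then for each $T\in G$ the invariance $\action{T}\!\dd=\dd$ gives $\div(\action{T}\!f)=\div f$, hence $\action{T}\!f=c_{T}f$ with $c_{T}\in K_{s}^{\times}$, and $\delta(\dd)$ is the class of $T\mapsto c_{T}$. (If one also wants $\delta$ surjective on the nose, note $K_{s}(E)/K_{s}(E)^{G}$ is Galois with group $G$, so $H^{1}(G,K_{s}(E)^{\times})=0$ by Hilbert~90 and \eqref{eq:long-exact-seq} continues with $\delta$ onto; this is not needed below, as surjectivity will drop out of a cardinality count.)

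For (1), I would introduce $\bar\mu\colon E[N]\to\Div_{K_{s}}^{0}(E)$ by $\bar\mu(S)=\sum_{T\in G}\bigl(\{S+T\}-\{T\}\bigr)$. This divisor is $G$-invariant by construction, has degree $0$, and its sum in $E$ is $NS=O$, so by Theorem~\ref{th:able} it is principal; thus $\bar\mu(S)\in P_{K_{s}}(E)^{G}$, and clearly $\bar\mu$ kills $G$. Let $\pi\colon E\to E'=E/G$ be the isogeny with kernel $G$ (separable, since the characteristic of $K$ does not divide $N$); then $\pi^{\asterisk}\{\pi(S)\}=\sum_{T\in G}\{S+T\}$, so $\bar\mu(S)=\pi^{\asterisk}\bigl(\{\pi(S)\}-\{O'\}\bigr)$. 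Using this together with $(K_{s}(E)^{\times})^{G}=\pi^{\asterisk}K_{s}(E')^{\times}$ and injectivity of $\pi^{\asterisk}$ on divisors, I would read off: (a) $\bar\mu(S)\in\div\bigl((K_{s}(E)^{\times})^{G}\bigr)$ iff $\{\pi(S)\}-\{O'\}$ is principal on $E'$, i.e.\ iff $\pi(S)=O'$, i.e.\ iff $S\in G$; and (b) for $S_{1},S_{2}\in E[N]$ the divisor $\bar\mu(S_{1})+\bar\mu(S_{2})-\bar\mu(S_{1}+S_{2})$ equals $\pi^{\asterisk}\bigl(\{\pi(S_{1})\}+\{\pi(S_{2})\}-\{O'\}-\{\pi(S_{1})+\pi(S_{2})\}\bigr)$, the pullback of a principal divisor on $E'$ (its sum in $E'$ is $O'$), hence again in $\div\bigl((K_{s}(E)^{\times})^{G}\bigr)$. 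By exactness of \eqref{eq:long-exact-seq} this subgroup is exactly $\ker\delta$, so (b) says $\delta\circ\bar\mu$ is a homomorphism, which by (a) factors through an injective homomorphism $\Psi\colon E[N]/G\to\Hom(G,\mu_{N})$; as both sides have order $N$, $\Psi$ is an isomorphism. It is $\Gamma_{K}$-equivariant because $\bar\mu$ is (as $G$ is $K$-rational, $\sigma\in\Gamma_{K}$ permutes $G$ and $\bar\mu(\action{\sigma}\!S)=\action{\sigma}\!\bar\mu(S)$) and $\delta$ is, by functoriality of the connecting homomorphism in $\Gamma_{K}$-modules.

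Parts (2) and (3) would then follow formally. With $e_{G}(S',T)=\Psi(S')(T)$, bilinearity in $S'$ is additivity of $\Psi$, bilinearity in $T$ is that $\Psi(S')$ is a homomorphism, nondegeneracy is injectivity of $\Psi$ together with its surjectivity onto $\Hom(G,\mu_{N})$ (whose elements separate the points of the cyclic group $G$), and $\Gamma_{K}$-compatibility is the equivariance of $\Psi$ transported through the natural $\Gamma_{K}$-action on $\Hom(G,\mu_{N})$. For (3), given $S'$ pick any lift $S\in E[N]$; then $\sum_{T\in G}\action{T}\!\dd_{S}=\bar\mu(S)$ is principal, so some $f_{S}\in K_{s}(E)^{\times}$ has $\div f_{S}=\bar\mu(S)$, and the concrete description of $\delta$ gives $e_{G}(S',T)=\delta(\bar\mu(S))(T)=\action{T}\!f_{S}/f_{S}=f_{S}(X-T)/f_{S}(X)$, independent of the choice of $f_{S}$ (unique up to $K_{s}^{\times}$) and of the lift $S$ (since $\bar\mu$ kills $G$), so $e_{G}$ is well defined. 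The step carrying the real content — and the one I expect to be the main obstacle — is (b): not merely that the additivity defect of $\bar\mu$ is a $G$-invariant principal divisor, but that it is the divisor of a $G$-\emph{invariant} function, which is precisely the descent to $E'=E/G$ and the use of Theorem~\ref{th:able} there; without it $\delta\circ\bar\mu$ would only be a crossed homomorphism and $e_{G}$ would fail to be bilinear.
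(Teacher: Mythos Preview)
Your argument is correct and is precisely the ``standard diagram chasing'' the paper alludes to; the paper does not give its own proof but only records that $H^{1}(G,K^{\times})\simeq\Hom(G,\mu_{N})$ and refers to V\'elu for the details. Your choice to work over $K_{s}$ and check $\Gamma_{K}$-equivariance afterwards is in fact cleaner than the paper's one-line sketch, which asserts $\mu_{N}\subset K^{\times}$ from $G\subset E[N](K)$ alone (a claim that is not obvious in the generality stated), and your identification of the key step---that the additivity defect of $\bar\mu$ lies in $\div\bigl((K_{s}(E)^{\times})^{G}\bigr)$ via descent to $E/G$---is exactly the point that makes $\Psi$ a genuine homomorphism rather than merely a map of sets.
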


\begin{proof}
Since $G$ is a subgroup of $E[N](K)$ by assumption, it follows that $\mu_{N}\in K^{\times}$, and thus we have $H^{1}(G,K^{\times})\simeq \Hom(G,\mu_{N})$.  The proof is a standard diagram chase.  See V\'elu \cite[Ch.~1]{Velu:1978} for detail.
\end{proof}

The usual Weil pairing $e_{N}:E[N]\times E[N]\to \mu_{N}$ is defined in a similar manner.  The relation between $e_{G}$ and $e_{N}$ is given by
\[
e_{N}(S,T)=e_{G}(\phi(S),T) \quad \text{for $S\in E[N]$, $T\in G$},
\]
where $\phi$ is the isogeny $E\to E/G$.

\subsection{Central extension $E[N](\dd)$}\label{central_ext}

A central extension of a group $G$ is a short exact sequence of groups
$1\to A\to H\to G\to 1$ such that $A$ is in the center of the group $H$.
Here, we consider central extensions of $E[N]$ by $K^{\times}$, that is, short exact sequences of groups
\[
1 \longrightarrow K^{\times} \longrightarrow H 
\longrightarrow E[N] \longrightarrow 0.
\]
(Note that the operation of $K^{\times}$ is written multiplicatively, while that of $E[N]$ is written additively.)

Given a divisor $\dd\in\Div_{K}(E)$, we may construct such a central extension.  

\begin{definition}\label{def:extension}
Let $\dd\in \Div_{K}(E)$ be a divisor of degree divisible by~$N$.  Define 
\[
E[N](\dd)=\{(T,f)\in E[N]\times K(E)^{\times}\mid \div f =\action{T}\!\dd-\dd\}, 
\]
with a group operation on $E[N](\dd)$ given by
\[
(S,g)(T,f)=(S+T,g\cdot\action{S}\!f).
\]
\end{definition}

Indeed, we have a natural inclusion $K^{\times}\to E[N](\dd)$ given by $c\mapsto (O,c)$, and an exact sequence
\[
1 \longrightarrow K^{\times} \longrightarrow E[N](\dd) 
\longrightarrow E[N] \longrightarrow 0.
\]
From now on, we identify $(O,c)\in E[N](\dd)$ with $c\in K^{\times}$ and consider $K^{\times}$ as a subgroup in $E[N](\dd)$ (to ease the notation).


\begin{lemma}\label{lem:lin-equiv}
If two divisors $\dd_{1}$ and $\dd_{2}$ are linearly equivalent, then the extensions $E[N](\dd_{1})$ and $E[N](\dd_{2})$ are isomorphic.
\end{lemma}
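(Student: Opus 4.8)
The plan is to write down an explicit isomorphism built from a rational function witnessing the linear equivalence. Since $\dd_{1}\sim\dd_{2}$, choose $h\in K(E)^{\times}$ with $\div h=\dd_{2}-\dd_{1}$; by Theorem~\ref{th:able} such an $h$ exists and is unique up to a factor in $K^{\times}$. I would then define
\[
\Phi\colon E[N](\dd_{1})\longrightarrow E[N](\dd_{2}),\qquad
\Phi(T,f)=\bigl(T,\ f\cdot(\action{T}\!h/h)\bigr),
\]
and record at the outset that $\Phi$ does not depend on the choice of $h$: replacing $h$ by $ch$ with $c\in K^{\times}$ leaves the ratio $\action{T}\!h/h$ unchanged, since the translation action fixes constants.

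First I would check that $\Phi$ is well defined, i.e.\ that $\bigl(T,\ f\cdot(\action{T}\!h/h)\bigr)$ really lies in $E[N](\dd_{2})$. This is a divisor computation: from $\div f=\action{T}\!\dd_{1}-\dd_{1}$ and $\action{T}(\div h)=\action{T}\!\dd_{2}-\action{T}\!\dd_{1}$ one gets
\[
\div\bigl(f\cdot(\action{T}\!h/h)\bigr)
=(\action{T}\!\dd_{1}-\dd_{1})+(\action{T}\!\dd_{2}-\action{T}\!\dd_{1})-(\dd_{2}-\dd_{1})
=\action{T}\!\dd_{2}-\dd_{2}.
\]
(The one point to keep an eye on is that the twisted function genuinely lies in $K(E)^{\times}$; this is automatic whenever the torsion points arising as first coordinates are $K$-rational, e.g.\ when $\dd_{i}=N\cdot O$.)

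The substantive step is that $\Phi$ is a group homomorphism. Using the group law $(S,g)(T,f)=(S+T,\,g\cdot\action{S}\!f)$ on both extensions, the second coordinate of $\Phi\bigl((S,g)(T,f)\bigr)$ is $g\cdot\action{S}\!f\cdot(\action{S+T}\!h/h)$, while that of $\Phi(S,g)\,\Phi(T,f)$ is $\bigl(g\cdot(\action{S}\!h/h)\bigr)\cdot\action{S}\!\bigl(f\cdot(\action{T}\!h/h)\bigr)$. Expanding the last expression with the identity $\action{S}\!\bigl(\action{T}\!h\bigr)=\action{S+T}\!h$ makes the factor $\action{S}\!h$ cancel, and the two coordinates agree. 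Finally $\Phi$ restricts to the identity on the subgroup $K^{\times}=\{(O,c)\}$ (because $\action{O}\!h=h$) and induces the identity on $E[N]$ (it preserves first coordinates), so $\Phi$ is a morphism of the two central extensions inducing the identity on sub and quotient; by the five lemma it is an isomorphism. Equivalently, its inverse is the completely analogous map $(T,f')\mapsto\bigl(T,\ f'\cdot(h/\action{T}\!h)\bigr)$ built from $h^{-1}$, which witnesses $\div(h^{-1})=\dd_{1}-\dd_{2}$.

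I expect the homomorphism verification to be the only place requiring care — specifically, making the translation twist $\action{S}$ interact correctly with the group law on the extension; once $\action{S}\!\action{T}=\action{S+T}$ is used the computation is routine, and everything else is bookkeeping.
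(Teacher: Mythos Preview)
Your proof is correct and is exactly the approach the paper has in mind: the paper records this lemma as ``Straightforward'' but later, in Lemma~\ref{lem:G(dd)-lin-equiv}, writes down the very same map $\nu\bigl((T,f)\bigr)=\bigl(T,\,f\cdot\action{T}\!\ph/\ph\bigr)$ with $\div\ph=\dd_{2}-\dd_{1}$. Your homomorphism check and the five-lemma/explicit-inverse argument are the intended details.
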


\begin{proof} Straightforward.
\end{proof}

In general $E[N](\dd)$ is not commutative and contains elements of infinite order.  
For $(S,g),(T,f)\in E[N](D)$, we denote the commutator and the $N$th power by 
\begin{align}\label{eq:<T,T'>}
&\<S,T\> =(S,g)(T,f)(S,g)^{-1}(T,f)^{-1}
=(g/\action{T}\!g)\cdot(\action{S}\!f/f)
\\
&v(T)=(T,f)^{N}=f\cdot\action{T}f\cdot\action{2T}f\cdot\dots\cdot\action{(N-1)T}f.
\end{align}
Recall that we identify $(O,c)$ with $c$.  As this notation suggests,  the facts that both $\<S,T\>$ and $v(T)$  $\bmod$ $n$th powers are independent of the choice of $f$ and $g$ are shown by the following proposition and lemma of V\'elu. 

\begin{proposition}[V\'elu \hbox{\cite[Prop.~2.3]{Velu:1978}}]\label{prop:<S,T>}
Let $\dd$ be a divisor of degree divisible by $N$, and let $S,T\in E[N]$.  Then, $\<S,T\>$ is a bilinear form on $E[N]$ with its value in $\mu_{N}$, and 
\[
\<S,T\> = e_{N}(T,S)^{\frac{\deg\dd}{N}}.
\qedhere
\]
\end{proposition}

\begin{lemma}[V\'elu \hbox{\cite[Lemma~2.4]{Velu:1978}}]\label{lem:v(T)}
Let $\dd$ be a divisor of degree divisible by $N$, and let $T\in E[N]$.  Then, $v(T)$ is in $K^{\times}$, and $v(T)\bmod {K^{\times}}^{N}$ is independent of the choice of $f$.
\qedhere
\end{lemma}

The case where $\dd=N\{O\}$ is of particular interest.   In this case, the results depend on the parity of~$N$.  The following lemma by V\'elu will be needed in \S\ref{subsec:even-case}.

\begin{lemma}
\label{lem:E[N](NO)}
Suppose $(T,f)\in E[N](N\{O\})$, and $\tilde T\in E[2N]$ is a point such that $2\tilde T=T$.  Then, 
\begin{enumerate}
\item {\upshape (V\'elu \cite[Lemma~2.8]{Velu:1978})} \ $v(T) = (T,f)^{N} = f(\tilde T)^{N}$.
\item {\upshape (V\'elu \cite[Lemma~2.7]{Velu:1978})} \ If $U_{2}\in E[2]-\{O\}$, then 
\[
\action{U_{2}}f(\tilde T)=(-1)^{N}e_{2}(U_{2},N\tilde T)f(\tilde T).
\]
\end{enumerate}
\end{lemma}

\section{Projective immersion associated with a cyclic subgroup}\label{sec:proj-immersion}

%
Let $\dd\in \Div_{K}(E)$ be a divisor.  If $\deg \dd\ge 3$, it is well known that the complete linear system $|\dd|$ gives an immersion $E\hookrightarrow |\dd|$ in a projective space.  Here, we take a closer look at this fact.  The following is a summary of results of V\'elu \cite{Velu:1978}.

\begin{definition}\label{def:L(D)}
For $\dd\in \Div_{K}(E)$ and nonnegative integer $d$, define
\begin{align*}
&\L^{d}(\dd) =\{ h\in K(E)^{\times}\mid \div h + d\dd\geq 0\} \cup \{0\}
\\
&\L^{\asterisk}(\dd)=\textstyle{\bigoplus_{d\ge0}}\,\L^{d}(\dd),
\\
&\Sym^{\asterisk}(\dd)=\operatorname{Sym}^{\asterisk}\L^{1}(\dd).
\end{align*}
\end{definition}
Here, $\operatorname{Sym}^{\asterisk}V$ means the symmetric algebra on a vector space $V$.
By the Riemann-Roch theorem, $\L^{d}(\dd)$ is a $K$-vector space of dimension $d\deg D$, and the space $\L^{\asterisk}(\dd)$ is equipped naturally with a structure of graded algebra by the multiplication of functions.
There is a canonical homomorphism of graded algebras $\Sym^{\asterisk}(\dd)\to \L^{\asterisk}(\dd)$, and let $\I^{\asterisk}(\dd)$ be its kernel. We have the exact sequence
\begin{equation}\label{eq:short-ex}
0\longrightarrow \I^{\asterisk}(\dd) \longrightarrow \Sym^{\asterisk}(\dd)
\longrightarrow \L^{\asterisk}(\dd).
\end{equation}

\begin{proposition}\label{prop:proj-normality}
Let $D$ be an effective divisor of degree $N\geq 4$.
\begin{enumerate}
\item {\upshape (V\'elu \cite[Th.~3.3]{Velu:1978}) }
The exact sequence \eqref{eq:short-ex} extends to the following exact sequence:
\[
0\longrightarrow \I^{\asterisk}(\dd) \longrightarrow \Sym^{\asterisk}(\dd) 
\longrightarrow \L^{\asterisk}(\dd) \longrightarrow 0.
\]
\item
$\dim\I^{2}(\dd)=N(N-3)/2$.
\item {\upshape (V\'elu \cite[Th.~3.9]{Velu:1978}) }
$\I^{\asterisk}(\dd)$ is generated by $\I^{2}(\dd)$, i.e., $\I^{\asterisk}(\dd)=\I^{2}(\dd)\cdot \Sym^{\asterisk}(\dd)$.
\item
The image of the map 
$E\hookrightarrow \Proj \Sym^{\asterisk}(\dd)\simeq \P^{N-1}$ is the scheme-theoretic intersection of the quadrics that contain the image of~$E$.
\end{enumerate}
\end{proposition}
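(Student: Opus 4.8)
The plan is to work with $\mathcal{L}=\O_{E}(\dd)$, the line bundle of degree $N$ attached to $\dd$, so that $\L^{d}(\dd)=H^{0}(E,\mathcal{L}^{\otimes d})$, of dimension $dN$ for $d\ge1$ as already noted. Granting \textup{(1)}, part \textup{(2)} is just the count $\dim\I^{2}(\dd)=\dim\operatorname{Sym}^{2}\L^{1}(\dd)-\dim\L^{2}(\dd)=\binom{N+1}{2}-2N=N(N-3)/2$, and \textup{(4)} will follow formally from \textup{(1)} and \textup{(3)}; so the real content is \textup{(1)} and \textup{(3)}. The cohomological input I will use repeatedly is that $E$ has genus $1$, so $H^{1}$ of a line bundle on $E$ vanishes as soon as its degree is positive --- while $H^{1}(\O_{E})$ is $1$-dimensional. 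As every assertion concerns dimensions and cokernels of $K$-linear maps, compatibly with the flat base change $K\to\overline{K}$, I may assume $K$ algebraically closed, hence infinite.

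For \textup{(1)} I would prove that the multiplication $\L^{1}(\dd)\otimes\L^{d-1}(\dd)\to\L^{d}(\dd)$ is surjective for every $d\ge2$; then \textup{(1)} follows by induction on $d$, since the image of $\operatorname{Sym}^{d}\L^{1}(\dd)$ in the commutative algebra $\L^{\asterisk}(\dd)$ agrees with that of $\L^{1}(\dd)^{\otimes d}$. The mechanism is the base-point-free pencil trick: for a $2$-dimensional base-point-free $V\subseteq H^{0}(\mathcal{M})$ and any line bundle $\mathcal{F}$, the Koszul sequence $0\to\mathcal{F}\otimes\mathcal{M}^{-1}\to\mathcal{F}\oplus\mathcal{F}\to\mathcal{F}\otimes\mathcal{M}\to0$ shows that the cokernel of $V\otimes H^{0}(\mathcal{F})\to H^{0}(\mathcal{F}\otimes\mathcal{M})$ embeds in $H^{1}(\mathcal{F}\otimes\mathcal{M}^{-1})$. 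For $d\ge3$ take $\mathcal{M}=\mathcal{L}$, $V\subseteq\L^{1}(\dd)$ a general pencil (base-point-free, as $\mathcal{L}$ is very ample), and $\mathcal{F}=\mathcal{L}^{\otimes(d-1)}$: the obstruction lies in $H^{1}(\mathcal{L}^{\otimes(d-2)})$, which vanishes because $\deg\mathcal{L}^{\otimes(d-2)}=(d-2)N\ge N\ge4>0$. The delicate case is $d=2$, where this choice produces the obstruction $H^{1}(\O_{E})$, which is $1$-dimensional and does \emph{not} vanish. To get around it I would factor $\mathcal{L}\cong\mathcal{A}\otimes\mathcal{B}$ with $\deg\mathcal{A}=2$ and $\deg\mathcal{B}=N-2\ge2$ (each base-point-free, since on an elliptic curve every line bundle of degree $2$ is a base-point-free pencil and one of degree $\ge3$ is very ample), and show $H^{0}(\mathcal{L}^{\otimes2})=H^{0}(\mathcal{A})\cdot H^{0}(\mathcal{B})\cdot H^{0}(\mathcal{A}\otimes\mathcal{B})\subseteq H^{0}(\mathcal{L})\cdot H^{0}(\mathcal{L})$: the pencil trick with $\mathcal{M}=\mathcal{A}$, $\mathcal{F}=\mathcal{A}\otimes\mathcal{B}^{\otimes2}$ has obstruction $H^{1}(\mathcal{B}^{\otimes2})=0$ (degree $2(N-2)\ge4$), and with $\mathcal{M}=\mathcal{B}$, $\mathcal{F}=\mathcal{A}\otimes\mathcal{B}$ it has obstruction $H^{1}(\mathcal{A})=0$ (degree $2$). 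This completes \textup{(1)}, and with it \textup{(2)}.

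For \textup{(3)} --- that the homogeneous ideal of the embedded curve is generated by its quadratic part --- the plan is the classical induction on the degree of the graded piece: assuming $\bigl(\I^{2}(\dd)\cdot\Sym^{\asterisk}(\dd)\bigr)_{d-1}=\I^{d-1}(\dd)$, cut with a general hyperplane $H$, so that $D_{H}=E\cap H$ is a set of $N$ points in linearly general position in $H\cong\P^{N-2}$, and compare the degree-$d$ part of $\I^{\asterisk}(\dd)$ with that of the ideal of $D_{H}$, using the exact sequences coming from multiplication by a general linear form together with the projective normality proved in \textup{(1)}. The base of the reduction is the classical fact that $N$ points in linearly general position in $\P^{N-2}$ have homogeneous ideal generated by quadrics; this holds precisely because $N\le2(N-2)+1$, i.e. $N\ge4$, and it is exactly here that the hypothesis $N\ge4$ is used (for $N=3$ the statement fails: the ideal of a plane cubic is generated by a cubic, not quadrics). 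Alternatively one may simply invoke Mumford's theorem that a projectively normal curve of degree $\ge2g+2$ has ideal generated by quadrics, applied with $g=1$ and $N\ge4=2g+2$. I expect this step --- the bookkeeping in the hyperplane-section reduction and the input on points in general position --- to be the main obstacle; everything else is Riemann--Roch and the pencil trick.

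Finally, \textup{(4)} is formal: by construction $\I^{\asterisk}(\dd)$ is the homogeneous ideal of all forms vanishing on the embedded curve, hence the (saturated) ideal of $E\subset\P^{N-1}$, and by \textup{(3)} it is generated by the quadrics $\I^{2}(\dd)$; therefore the scheme-theoretic intersection of the quadrics containing $E$, being the subscheme defined by $\I^{2}(\dd)\cdot\Sym^{\asterisk}(\dd)=\I^{\asterisk}(\dd)$, is exactly $E$.
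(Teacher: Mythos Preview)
Your argument is correct. The paper itself does not give a proof at all: it simply cites V\'elu \cite[Th.~3.3, Th.~3.9]{Velu:1978}, Hartshorne \cite[Ex.~IV.4.2]{Hartshorne:AG}, and Mumford \cite{Mumford:1970a} for (1) and (3), derives (2) by the same dimension count you give, and notes that (4) follows from (3). What you have written is essentially the content of those references --- the base-point-free pencil trick with the factorisation $\mathcal{L}\cong\mathcal{A}\otimes\mathcal{B}$ to handle $d=2$ is precisely the Castelnuovo--Mumford argument behind projective normality for degree $\ge 2g+2$, and the hyperplane-section reduction (or the direct appeal to Mumford's $2g+2$ theorem) is the standard route to generation by quadrics. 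So your approach is not different from the paper's; you have supplied the details the paper leaves to the literature, and your treatment of the delicate $d=2$ step is clean and correct.
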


\begin{proof}
(1) \ The exactness at $\L^{\asterisk}(\dd)$ is one of the equivalent definitions of projective normality.  For a proof see V\'elu \cite[Th.~3.3]{Velu:1978}.  See also Hartshorne \cite[Ex.~IV.4.2]{Hartshorne:AG} and Mumford \cite[p.~55]{Mumford:1970a}.  The formula (2) follows immediately from (1) as $\dim \Sym^{2}(\dd)=N(N+1)/2$ and $\dim \L^{2}(\dd)=2N$.

(3) \ See V\'elu \cite[Th.~3.9]{Velu:1978}.  The assertion (4) is just a paraphrase of    (3) as the image of $E$ is defined by an ideal generated by $\I^{2}(\dd)$.
\end{proof}

The central extension $E[N](\dd)$ has a representation on $\L^{\asterisk}(\dd)$.  More precisely, V\'elu showed the following.
\begin{proposition}[V\'elu {\cite[Prop.~2.13]{Velu:1978}}]
\label{prop:LD-rep}
\begin{enumerate}
\item
For $h\in \L^{d}(\dd)$ and $(T,f)\in E[N](\dd)$, the function $\action{T}h\cdot f^{d}$ is in $\L^{d}(\dd)$.
\item
Let $\tau_{d}(T,f)$ be the automorphism defined by $h\mapsto f^{d}\cdot\action{T}h $ of $\L^{d}(\dd)$.  Then, $\tau_{d}:(T,f)\mapsto \tau_{d}(T,f)$ is a representation of $E[N](\dd)$ on $\L^{d}(\dd)$, and the representations $\tau_{d}$ extend to a representation $\tau$ of $E[N](\dd)$ on the graded ring $\L^{\asterisk}(\dd)$. 
\item
If $\dd$ is an effective divisor of degree $\geq 2$ and $d\geq 1$, the kernel of the representation $\tau_{d}$ consists of the elements $(O,f)$, where $f$ is a constant satisfying $f^{d}=1$. In particular, $\tau_{1}$ is faithful.
\end{enumerate}
\end{proposition}

Let $C\subset E[N]$ be a cyclic subgroup of order $N$.  Suppose $\dd$ is an effective divisor of degree $N$ invariant under the translations by $C$.  Then, we may choose a particular coordinate system of $\P^{N-1}$ so that the immersion $E\hookrightarrow |\dd|\simeq \P^{N-1}$ is expressed in a simple way.  To do so, we consider the decomposition of $\L^{1}(\dd)$ into eigenspaces.

\begin{definition}\label{def:L(D,chi)}
For any character $\chi\in \Hom(C,\mu_{N})$ and a positive integer~$d$, define
\[
\L^{d}(\dd,\chi) = \{h\in \L^{d}(\dd)\mid
\action{U}h=\chi(U)\cdot h \text{ for all $U$ in $C$} \}.
\]
Since $\L^{d}(\dd)$ is a finite dimensional vector space on which the cyclic group $C$ acts, it decomposes into eigenspaces.  Thus, we have
\[
\L^{d}(\dd) = \textstyle{\bigoplus_{\chi}}\L^{d}(\dd,\chi),
\quad
\text{and}
\quad
\L^{\asterisk}(\dd)=\textstyle{\bigoplus_{d,\chi}}\L^{d}(\dd,\chi).
\]
\end{definition}

\begin{definition}\label{def:char_S}
Let $C\subset E[N]$ be a cyclic subgroup of order $N$.  For any $S\in E[N]$, define the character $\chi_{S}\in \Hom(C,\mu_{N})$ by
\[
\chi_{S}(U)=e_{N}(S,U) \quad\text{for all $U\in C$}.
\]
\end{definition}

\begin{proposition}\label{prop:L(D)}
Let $C\subset E[N]$ be a cyclic subgroup of order $N$, and $\dd$ an effective divisor of degree $N$ invariant under the translations by $C$.
\begin{enumerate}
\item
The space $\L^{1}(\dd,\chi)$ is $1$-dimensional.
\item 
If $(S,f_{S})\in E[N](\dd)$, then $f_{S}$ is a basis of $\L^{1}(\dd,\chi_{S})$.
\item
The map $\tau_{d}(S,f_{S}):\L^{d}(\dd,\chi)\to \L^{d}(\dd,\chi\chi_{S}^{d})$ is an isomorphism for all $d\ge 0$.
\end{enumerate}
\end{proposition}

\begin{proof}
Let $(S,f_{S})$ be an element of $E[N](\dd)$. Since $\div f_{S} + \dd=\action{S}\dd\ge0$, $f_{S}$ is in $\L^{1}(\dd)$.
Since $\action{U}D-D=0$ for $U\in C$, we have $(U,1)\in E[N](\dd)$.  By \eqref{eq:<T,T'>}, we have $\<U,S\>={\action{U}\!f_{S}}/{f_{S}}$, and by Proposition~\ref{prop:<S,T>}, we have $\<U,S\>=e_{N}(S,U)=\chi_{S}(U)$.  Thus, we have $\action{U}\!f_{S}=\chi_{S}(U)f_{S}$, which shows $f_{S}\in \L^{1}(\dd,\chi_{S})$.

If $h\in \L^{d}(\dd,\chi)$, then 
\[
\action{U}(\action{S}h\cdot f_{S}^{d}) = \action{U+S}h\cdot(\action{U}\!f_{S})^{d}=\chi(U)\cdot\action{S}h\cdot(\action{U}\!f_{S})^{d}
=\chi(U)\chi_{S}(U)^{d}(\action{S}h\cdot f_{S}^{d}).
\]
This implies the automorphism $\tau_{d}(S,f_{S})$ maps $\L^{d}(\dd,\chi)$ onto $\L^{d}(\dd,\chi\chi_{S}^{d})$, and (3) is proved. As a consequence, $\tau_{d}$ permutes the spaces $\L^{d}(\dd,\chi)$ transitively as long as $d$ is relatively prime to~$N$, and in that case the spaces $\L^{d}(\dd,\chi)$ have the same dimension independent of~$\chi$.  In particular, this is the case for $\L^{1}(D,\chi)$.

By the Riemann-Roch theorem, $\dim \L^{1}(\dd)=\deg\dd=N$.  On the other hand
\begin{align*}
\dim \L^{1}(\dd) &=
\sum_{\chi}\dim \L^{1}(\dd,\chi) 
= \#\Hom(C,\mu_{N}) \cdot\dim \L^{1}(\dd,\chi_{0}) 
\\
&= N \dim \L^{1}(\dd,\chi_{0}),
\end{align*}
where $\chi_{0}$ is the trivial character.  Thus, we conclude that $\dim \L^{1}(\dd,\chi)=1$ for any $\chi$, which proves (1).  Since any nonzero function in a one dimensional space is its basis, $f_{S}$ is a basis of $\L^{1}(\dd,\chi_{S})$.
\end{proof}

\section{Canonical coordinate system}\label{sec:can-coord-system}

In this section we consider the projective immersion $E\hookrightarrow \bigl|N\{O\}\bigr|\simeq \P^{N-1}$.  We show that, by fixing a certain level structure on $E$, we can choose a unique coordinate system with prescribed properties.  The situation differs depending on the parity of~$N$.  The odd case is well known, but we include it here for the comparison with the even case.

\subsection{Odd case}\label{subsec:odd-case}

Throughout this paragraph we assume $N$ is an odd integer $\ge 3$.  This section serves as a prototype for the even case, and all the material in this section is written in V\'elu \cite{Velu:1978}.  See also Fisher \cite{Fisher:thesis}, \cite{Fisher:2001} for geometric treatment.

Let $(S,T)$ be a pair of points in $E[N]$, and $\sltwo(a,b;c,d)$ in $M_{2}(\Z)$.  Consider the right action of $\sltwo(a,b;c,d)$ on $(S,T)$ defined by
\[
(S,T)\sltwo(a,b;c,d)=(aS+cT,bS+dT).
\]
Recall that if the pair $(S, T)$ is a basis of $E[N]$ satisfying $e_{N}(S,T)=\zeta$, then the pair $(S',T')=(aS+cT,bS+dT)$ is once again a basis of $E[N]$ satisfying $e_{N}(S',T')=\zeta$ if and only if $\sltwo(a,b;c,d)\in SL_{2}(\Z)$.  The kernel of this action is given by
\[
\Gamma(N) = \left\{\left.
\sltwo(a,b;c,d)\in SL_{2}(\Z) \,\right|\,
\sltwo(a,b;c,d)\equiv \sltwo(1,0;0,1)\bmod N\right\},
\]
which is the principal congruence subgroup of level $N$ in $SL_{2}(\Z)$. 

\begin{definition}[$\Gamma(N)$ structure]\label{def:level-Gamma(N)}
Fix a primitive $N$th root of unity $\zeta$. A \emph{$\Gamma(N)$-structure} on an elliptic curve $E$ is a pair of $N$-torsion points $(S, T)$ satisfying $e_{N}(S,T)=\zeta$.
\end{definition}

From now on we fix a primitive $N$th root of unity $\zeta$ in $K_{s}$, and a $\Gamma(N)$-structure $(S,T)$.  Define the divisor $\dd_{T}$ by
\[
\dd_{T}=\sum_{n=0}^{N-1}\{nT\}.
\]
Clearly, $\dd_{T}$ is invariant under the translation by~the cyclic group $C=\<T\>$.

Since $\addition_{n=0}^{N-1}nT=O$, $\dd_{T}$ is linearly equivalent to~$N\{O\}$.  Choose $f_{S,T}$ such that
\(
\div f_{S,T}=\action{S}\dd_{T}-\dd_{T}.
\)
Since $(S,f_{S,T})^{N}\equiv 1 \bmod {K^{\times}}^{N}$\! by Lemma~\ref{lem:E[N](NO)} (1), by multiplying $f_{S,T}$ by a suitable constant if necessary, we may assume $(S,f_{S,T})^{N}=1$.  

\begin{definition}\label{def:X_k-odd}
Let $f_{S,T}$ be as above. 
Define functions $X_{0},X_{1},\dots,$ $X_{N-1}\in K(E)$ indexed by the elements $k\in\Z/N\Z$ by 
\[
X_{0} = 1,
 \quad
X_{k}
=\tau_{1}(S,f_{S,T})X_{k-1} = \action{S}X_{k-1}\cdot f_{S,T}
\quad  (k=1,\dots,N-1),
\]
where $\tau_{1}(S,f_{S,T})$ is the automorphism of $\L(D)$ defined in Proposition~\ref{prop:LD-rep}{\color{red}.}
\end{definition}

\begin{lemma}\label{lem:X_k-odd}
\begin{enumerate}
\item $X_{k}$ is well-defined, i.e., $\tau_{1}(S,f_{S,T})X_{N-1}=X_{0}$.
\item $\div X_{k}=\action{kS}D_{T}-D_{T}$ for $k\in \Z/N\Z$.
\item 
The action of $(T,1)\in E[N](\dd_{T})$ is given by
\[
\tau_{1}(T,1)X_{k}=\action{T}X_{k}=e_{N}(S,T)^{k}X_{k}.
\]
\item
$X_{k}$ is a basis of $\L^{1}(\dd_{T},\chi_{S}^{k})$ for $k\in \Z/N\Z$. 
\item
$X_{k}(-P)=X_{-k}(P)$ for any $P\in E$.
\end{enumerate}
\end{lemma}

\begin{proof}
The assertion (1) follows from the fact that $(S,f_{S,T})^{N}=1$.  By definition we have $X_{k}=\action{S}X_{k-1}\cdot f_{S,T}$, and then the assertion (2) follows by induction on~$k$.  By Proposition~\ref{prop:<S,T>}, we have $\<T,S\>=e_{N}(S,T)=\chi_{S}(T)$, and thus 
\[
\tau_{1}(T,1)X_{1}
=\tau_{1}(T,1)\tau_{1}(S,f_{S,T})X_{0}
=\chi_{S}(T)
\tau_{1}(S,f_{S,T})\tau_{1}(T,1)X_{0}
=\chi_{S}(T)X_{1}.
\]
The assertion (3) then follows by induction on~$k$, and (4) follows immediately from (3) and Proposition~\ref{prop:L(D)}\,(2). 
To prove (5), it suffices to remark $[-1](\action{kS}\dd_{T})=\action{-kS}\dd_{T}$.
This implies that the functions $X_{k}\circ [-1]$ and $X_{-k}$ has the same divisor and thus $X_{-k}=c_{k}X_{k}\circ [-1]$ for some constants $c_{k}$. It is clear that $c_{0}=1$. Since we have $\tau_{1}(S,f_{S,T})\circ[-1]\circ \tau_{1}(S,f_{S,T})=[-1]$, we can prove $c_{k}=1$ by induction on~$k$.
\end{proof}

Interpreting the above proposition geometrically, we have 

\begin{proposition}\label{prop:proj-coord-odd}
Let $E$ be an elliptic curve, and $E\hookrightarrow\P^{N-1}$ be an immersion as an elliptic normal curve of odd degree~$N$ via the complete linear system $|N\{O\}|$.  Choose a primitive $N$th root of unity $\zeta$ and a $\Gamma(N)$-structure $(S,T)$. Then, there exists a unique coordinate system of $\P^{N-1}$ such that the translation maps $\tau_{S}$, $\tau_{T}$, and the multiplication-by-$(-1)$ map $[-1]$ are given by the following elements $M_{S}$, $M_{T}$ and $M_{[-1]}$ in $PGL_{N}(K(\zeta))$, respectively.
\begin{equation}\label{eq:matrices}
\setlength{\arraycolsep}{3pt}
\begin{gathered}
M_{S}= 
\renewcommand{\arraystretch}{0.75}
\left[\begin{array}{*5c|c}
0 &  & \cdots &  & 0 &  1\\\hline
1 &  &  &  &  &  0 \\
  & 1 &  &   &   &  \\
 &  & \ddots &   &   &\vdots \\
 &  &  &  1 &   &   \\
 &  &  &  & 1 & 0
\end{array}\right],
\quad
M_{T}=
\renewcommand{\arraystretch}{0.9}
\left[\begin{array}{*5c}
1 &   &   &   &   \\
  & \zeta &   &   &   \\
  &   & \zeta^{2} &   &   \\
  &   &   & \ddots &   \\ 
  &   &   &   & \zeta^{N-1}
\end{array}\right],
\\
\renewcommand{\arraystretch}{0.75}
M_{[-1]}=\left[\begin{array}{c|*5c}
1 & 0 &   & \cdots  &   & 0 \\\hline
0 &   &   &   &   & 1 \\
  &   &   &   & 1 &  \\
\vdots  &   &   & \udots &   &   \\
  &   & 1 &   &    &   \\
0 & 1 &   &   &   &  
\end{array}\right]
\end{gathered}
\end{equation}
\end{proposition}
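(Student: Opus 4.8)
The plan is to transport the module-theoretic description of Proposition \ref{prop:X_k-odd} into projective coordinates. First I would fix the divisor $\dd_T$ and the function $f_{S,T}$ with $(S,f_{S,T})^N=1$ as in the setup, and recall that $\L^1(\dd_T)$ has dimension $N$ with the eigenspace decomposition $\L^1(\dd_T)=\bigoplus_{k\in\Z/N\Z}\L^1(\dd_T,\chi_S^k)$, each summand one-dimensional by Proposition \ref{prop:L(D)}(1). Since the immersion $E\hookrightarrow\bigl|N\{O\}\bigr|$ depends on $\dd$ only up to linear equivalence and $\dd_T\sim N\{O\}$, Lemma \ref{lem:G(dd)-lin-equiv} lets me identify $\bigl|N\{O\}\bigr|\simeq\bigl|\dd_T\bigr|=\P(\L^1(\dd_T)^\vee)$ compatibly with the canonical representation. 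I then declare the homogeneous coordinates $(x_0:\dots:x_{N-1})$ to be the ones dual to the ordered basis $(X_0,\dots,X_{N-1})$ of $\L^1(\dd_T)$ constructed before the proposition; this is a well-defined coordinate system since each $X_k$ is a nonzero element of the one-dimensional space $\L^1(\dd_T,\chi_S^k)$, hence the $X_k$ are linearly independent.

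Next I would compute the three matrices. The translation-by-$S$ map $\tau_S$ on $E$ pulls functions back by $h\mapsto\action{S}h$, and by construction $\tau(S,f_{S,T})X_{k-1}=\action{S}X_{k-1}\cdot f_{S,T}=X_k$ for $1\le k\le N-1$, while for $k=0$ we get $\tau(S,f_{S,T})X_{N-1}=(S,f_{S,T})^N\cdot X_0=X_0$ because $(S,f_{S,T})^N=1$. So on the chosen basis $\tau(S,f_{S,T})$ acts as the cyclic permutation $X_k\mapsto X_{k+1}$, whose matrix on coordinates is $M_S$ above (up to the usual transpose-inverse bookkeeping between a linear map on functions and the induced projective transformation, which I would spell out once). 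Since $\tau(S,f_{S,T})$ and $\tau_S$ differ only by the scalar factor $f_{S,T}^{\deg}$, they induce the same element of $PGL_N$. For $\tau_T$: by Proposition \ref{prop:X_k-odd}(2), $\action{T}X_k=e_N(S,T)^k X_k=\zeta^k X_k$, giving the diagonal matrix $M_T$. For $[-1]$: Proposition \ref{prop:X_k-odd}(3) gives $X_k(-P)=X_{-k}(P)$, i.e.\ $[-1]^*X_k=X_{-k}$, which is the permutation $x_0\mapsto x_0$, $x_k\leftrightarrow x_{N-k}$, and this is exactly $M_{[-1]}$ (here $N$ odd is what makes $k\mapsto -k$ a fixed-point-free involution on $\{1,\dots,N-1\}$, matching the anti-diagonal block).

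Finally, uniqueness. A coordinate system in which $\tau_T$ is diagonal with distinct eigenvalues $1,\zeta,\dots,\zeta^{N-1}$ is determined up to scaling each coordinate axis separately and permuting axes that share an eigenvalue; since the eigenvalues are pairwise distinct (as $\zeta$ is primitive of order $N$), only the diagonal scalings remain, so the coordinate system is pinned down up to an element of the diagonal torus in $PGL_N$. Requiring in addition that $\tau_S$ act by the cyclic shift $M_S$ — a matrix with a single nonzero entry in each row, linking $x_{k}$ to $x_{k+1}$ — forces all those scalars to be equal, hence the coordinate system is unique. I would phrase this as: the centralizer in $PGL_N$ of both $M_S$ and $M_T$ is trivial (the joint stabilizer of the two matrices acts on $\L^1(\dd_T)$ commuting with the faithful representation $\tau_1$ of the group generated by $(S,f_{S,T})$ and $(T,1)$, which spans $\L^1(\dd_T)$ irreducibly, so by Schur it is scalar, i.e.\ trivial in $PGL_N$). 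The main obstacle is the last point: verifying that the pair $(M_S,M_T)$ really does generate an irreducible representation so that Schur's lemma applies, and being careful about the transpose/inverse convention relating the action on the function space $\L^1(\dd_T)$ to the action on the projective space $\P(\L^1(\dd_T)^\vee)$ — everything else is a direct translation of Proposition \ref{prop:X_k-odd}.
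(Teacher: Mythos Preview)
Your existence argument is fine and matches the paper's approach: it is a direct geometric translation of Proposition~\ref{prop:X_k-odd}, which is exactly how the even analogue (Proposition~\ref{prop:proj-coord-even}) is proved.

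The uniqueness argument, however, has a genuine gap. You claim that the centralizer of $\{M_S,M_T\}$ in $PGL_N$ is trivial, and justify this via Schur's lemma. But Schur only tells you that the centralizer of $\{M_S,M_T\}$ in $GL_N$ consists of scalars. What you actually need is that any $[A]\in PGL_N$ with $A M_S A^{-1}=\lambda M_S$ and $A M_T A^{-1}=\mu M_T$ for some scalars $\lambda,\mu$ is trivial, and this is \emph{false}: a short computation shows that every element of $\langle[M_S],[M_T]\rangle\cong(\Z/N\Z)^2$ has this property (indeed $M_S M_T=\zeta\,M_T M_S$ in $GL_N$, so $[M_S]$ and $[M_T]$ commute in $PGL_N$ and each normalizes the scalar line through the other). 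Your informal version of the argument has the same oversight: requiring $\tau_T$ to be $[M_T]$ only pins down the coordinate axes up to a cyclic shift $M_S^j$ and diagonal rescaling, not just rescaling, because $[M_T]=[\zeta^j M_T]$ in $PGL_N$. As stated, your argument would equally ``prove'' uniqueness in the even case, contradicting the last paragraph of Proposition~\ref{prop:proj-coord-even}, where four distinct coordinate systems survive.

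The repair is to bring $M_{[-1]}$ back into the uniqueness argument and to use that $N$ is odd. One checks $M_{[-1]}M_S M_{[-1]}=M_S^{-1}$ and $M_{[-1]}M_T M_{[-1]}=M_T^{-1}$, hence for $A=M_S^{a}M_T^{b}$ one gets $A\,M_{[-1]}\,A^{-1}=M_{[-1]}M_S^{-2a}M_T^{-2b}$ in $PGL_N$. Requiring this to equal $[M_{[-1]}]$ forces $2a\equiv 2b\equiv 0\pmod N$, and since $N$ is odd this gives $a\equiv b\equiv 0$. Thus the $PGL_N$-centralizer of $\{M_S,M_T,M_{[-1]}\}$ is trivial, which is the uniqueness you want. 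This is also where the odd/even dichotomy that drives the whole paper first shows up: for $N$ even the same computation leaves a Klein four-group $\langle M_S^{N/2},M_T^{N/2}\rangle$ of ambiguity, exactly the one appearing in Proposition~\ref{prop:proj-coord-even}.
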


\begin{remark}
When we use the square bracket $[\quad ]$ for matrix, we mean it is a class in $PGL$ or $PSL$.
\end{remark}

\begin{proposition}\label{prop:quad-eqs-odd}
The notation being as above, we have the following.
\begin{align*}
&\I^{2}(\dd_{T}) 
= \textstyle{\bigoplus_{k\in\Z/N\Z}}\I^{2}(\dd_{T},\chi_{S}^{k}),\\
&\I^{2}(\dd_{T},\chi_{S}^{k})=\I^{2}(\dd_{T})\cap \<X_{i}X_{j}\mid i+j\equiv k \mod N\>,
\\
&\dim \I^{2}(\dd_{T},\chi_{S}^{k}) =(N-3)/2, \quad
\dim \I^{2}(\dd_{T}) =N(N-3)/2.
\end{align*}
\end{proposition}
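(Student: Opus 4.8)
The plan is to decompose $\I^2(\dd_T)$ according to the eigenspace grading and then apply Proposition~\ref{prop:I^2} to each graded piece. First I would observe that $\Sym^2(\dd_T)$ inherits the $\Hom(C,\mu_N)$-grading from $\L^1(\dd_T)$: since $X_k$ spans $\L^1(\dd_T,\chi_S^k)$ by Proposition~\ref{prop:X_k-odd}(1), the monomial $X_iX_j$ lies in the $\chi_S^{i+j}$-component, so $\Sym^2(\dd_T,\chi_S^k)=\<X_iX_j\mid i+j\equiv k\bmod N\>$. The canonical homomorphism $\Sym^{\asterisk}(\dd_T)\to \L^{\asterisk}(\dd_T)$ is a map of graded algebras (it commutes with the $C$-action), hence its kernel $\I^{\asterisk}(\dd_T)$ is a graded ideal; taking degree-$2$ parts gives $\I^2(\dd_T)=\bigoplus_{k}\I^2(\dd_T,\chi_S^k)$ with $\I^2(\dd_T,\chi_S^k)=\I^2(\dd_T)\cap\Sym^2(\dd_T,\chi_S^k)=\I^2(\dd_T)\cap\<X_iX_j\mid i+j\equiv k\>$. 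This establishes the first two displayed formulas.

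Next I would compute the dimensions. Proposition~\ref{prop:I^2} gives $\dim\I^2(\dd_T,\chi_S^k)=(N-4+r_{\chi_S^k})/2$, where $r_{\chi_S^k}$ counts characters $\chi'\in\Hom(C,\mu_N)$ with $\chi'^2=\chi_S^k$. Since $C$ is cyclic of order $N$ and $N$ is odd, squaring is an automorphism of $\Hom(C,\mu_N)\cong\Z/N\Z$, so there is exactly one such $\chi'$; thus $r_{\chi_S^k}=1$ for every $k$, giving $\dim\I^2(\dd_T,\chi_S^k)=(N-3)/2$. Summing over the $N$ values of $k$ yields $\dim\I^2(\dd_T)=N(N-3)/2$, which also matches Proposition~\ref{prop:proj-normality}(2). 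Finally, by Proposition~\ref{prop:proj-normality}(3)--(4), the image of $E$ in $\P^{N-1}$ is cut out scheme-theoretically by the quadrics in $\I^2(\dd_T)$, so $N(N-3)/2$ quadratic equations define it.

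I do not expect any serious obstacle here: the statement is essentially a bookkeeping consequence of results already quoted from V\'elu, the only genuine input being the parity observation that makes $r_{\chi}=1$ uniformly. The one point requiring a little care is verifying that $\I^{\asterisk}(\dd_T)$ is genuinely a graded ideal for the finer $\N\times\Hom(C,\mu_N)$-grading rather than merely the $\N$-grading — but this follows because the algebra map $\Sym^{\asterisk}(\dd_T)\to\L^{\asterisk}(\dd_T)$ is $C$-equivariant (both sides carry the translation action of $C$, and the map is the obvious one sending a symbolic product of functions to their actual product), hence preserves eigenspaces. If one prefers to avoid invoking Proposition~\ref{prop:I^2} as a black box, an alternative is to note that $\L^2(\dd_T,\chi)$ is $2$-dimensional for each $\chi$ (so that $\L^2(\dd_T)$ has total dimension $2N$, consistent with Riemann-Roch) while $\Sym^2(\dd_T,\chi)$ has dimension $(N+1)/2$ or $(N-1)/2$ depending on whether $\chi$ is a square in $\Hom(C,\mu_N)$ — but when $N$ is odd every character is a square, so each $\Sym^2(\dd_T,\chi_S^k)$ has dimension $(N+1)/2$, and the surjectivity in Proposition~\ref{prop:proj-normality}(1) restricts to each graded piece, giving $\dim\I^2(\dd_T,\chi_S^k)=(N+1)/2-2=(N-3)/2$ directly.
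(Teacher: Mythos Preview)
Your proposal is correct. The paper does not give an explicit proof of this proposition (the entire subsection is presented as known material from V\'elu, with the remark that ``all the material in this section is written in V\'elu \cite{Velu:1978}''), but your argument is exactly the intended one: the proposition is the specialization of Proposition~\ref{prop:I^2} to the odd case, where the parity of $N$ forces $r_{\chi}=1$ for every character and hence a uniform dimension of $(N-3)/2$ in each eigenspace.
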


\begin{proof}
These assertions are the contents of \S3.2 of \cite{Velu:1978} expressed in our notation.  Since $N$ is odd, there is only one $\chi'$ such that $\chi'^{2}=\chi$ for any character $\chi\in \Hom(C,\mu_{n})$.  Thus we have $\dim \I^{2}(\dd_{T},\chi_{S}^{k}) =(N-3)/2$ by Corollaire~3.6 of~\cite{Velu:1978}.
\end{proof}

\subsection{Even case}\label{subsec:even-case}

We now assume that $N$ is a positive even integer.
A critical difference between this case and the last is that we have $\addition_{n=0}^{N-1}nT=T_{2}\neq O$ if $N$ is even, and thus $\dd_{T}=\sum_{n=0}^{N-1}\{nT\}$ is \emph{not} linearly equivalent to~$N\{O\}$.  To define a divisor linearly equivalent to $N\{O\}$, we use a $\tilde T\in E[2N]$ such that $2\tilde T=T$.

First we fix a primitive $N$th root of unity $\zeta$ and a $\Gamma(N)$-structure $(S,T)$, then we consider $\Gamma(2N)$-structures on $E$ in relation to this fixed $\Gamma(N)$-structure.  To do so, we first choose a primitive $2N$th root of unity $\tilde\zeta$ such that $\tilde\zeta^{2}=\zeta$.

\begin{definition}\label{def:level-2N}
Suppose $(S,T)$ is a $\Gamma(N)$-structure on $E$ with $e_{N}(S,T)=\zeta=\tilde \zeta^{2}$. 
\begin{enumerate}
\item
We say that a $\Gamma(2N)$-structure $(\tilde S,\tilde T)$ on $E$ is \emph{above} $(S,T)$ if 
\[
2\tilde S=S, \quad 2\tilde T=T, \quad\text{and}\quad
e_{2N}(\tilde S,\tilde T)=\tilde\zeta.
\]
\item
We say that two $\Gamma(2N)$-structures $(\tilde S,\tilde T)$ and $(\tilde S',\tilde T')$ are \emph{similar} if $(\tilde S',\tilde T')$ equals either
\[
(\tilde S,\tilde T) \quad\text{or}\quad
(\tilde S,\tilde T)\sltwo(1+N,0;0,1+N).
\]
We write $(\tilde S',\tilde T')\sim (\tilde S,\tilde T)$.
\end{enumerate}
\end{definition}

\begin{notation}\label{notation:2-torsion}
For simplicty, we denote by $U_{2}$ the $2$-torsion point $\frac{N}{2}U\in E[2]$ for any $U\in E[N]$.  
\end{notation}

Note that if $(S,T)$ is a $\Gamma(N)$-structure, then $(S_{2},T_{2})$ is a basis of $E[2]$, and $e_{2}(S_{2},T_{2})=e_{N}(S,T)^{N/2}=\zeta^{N/2}=-1$.

\begin{lemma}\label{lem:level-N-2N}
For a given $\Gamma(N)$-structure $(S,T)$, there are eight $\Gamma(2N)$-structures $(\tilde S,\tilde T)$ above $(S,T)$.  Up to similarity, these eight are classified into four classes.  If $(\tilde S,\tilde T)$ is one of them, then the following four represent the four different classes\/:
\[
(\tilde S,\tilde T), \quad
(\tilde S,\tilde T)\sltwo(1,N;0,1),\quad
(\tilde S,\tilde T)\sltwo(1,0;N,1),\quad
(\tilde S,\tilde T)\sltwo(1,N;N,1),
\]
or, using Notation~\ref{notation:2-torsion},
\[
(\tilde S,\tilde T), \quad
(\tilde S,\tilde T + S_{2}),\quad
(\tilde S + T_{2},\tilde T),\quad
(\tilde S + T_{2},\tilde T + S_{2}).
\]
\end{lemma}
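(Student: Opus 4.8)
The plan is to work inside the $16$-element set
$L=\{(\tilde S,\tilde T)\in E[2N]\times E[2N]\mid 2\tilde S=S,\ 2\tilde T=T\}$,
on which $E[2]\times E[2]$ acts simply transitively by translation, and to study the function $w\colon L\to\mu_{2N}$, $w(\tilde S,\tilde T)=e_{2N}(\tilde S,\tilde T)$. By Definition~\ref{def:level-2N} the $\Gamma(2N)$-structures above $(S,T)$ are precisely the points of the fibre $w^{-1}(\tilde\zeta)$, so the first task is to show $|w^{-1}(\tilde\zeta)|=8$.

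Two properties of the Weil pairing do the counting. First, for every $(\tilde S,\tilde T)\in L$,
\[
w(\tilde S,\tilde T)^2=e_{2N}(2\tilde S,\tilde T)=e_{2N}(S,\tilde T)=e_N(S,2\tilde T)=e_N(S,T)=\zeta=\tilde\zeta^{\,2},
\]
using bilinearity together with the compatibility $e_{2N}(P,2Q)=e_N(P,Q)$ for $P\in E[N]$, $Q\in E[2N]$; hence $w$ takes its values in $\{\tilde\zeta,-\tilde\zeta\}$. Second, $e_{2N}(A,B)=1$ for all $A,B\in E[2]$, since $e_{2N}(A,B)=e_2(NA,B)=e_2(O,B)=1$ --- this is the one place where the evenness of $N$ is essential. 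Now for $A,B\in E[2]$ the bilinear expansion gives $w(\tilde S+A,\tilde T+B)=w(\tilde S,\tilde T)\cdot e_{2N}(A,\tilde T)\cdot e_{2N}(\tilde S,B)$, and each of $A\mapsto e_{2N}(A,\tilde T)$, $B\mapsto e_{2N}(\tilde S,B)$ is a homomorphism $E[2]\to\mu_2$ which is surjective, because $E[2]^{\perp}=E[N]$ (clear containment plus equal order $N^2$, by nondegeneracy of $e_{2N}$) while $\tilde S$ and $\tilde T$ have exact order $2N$. Consequently $e_{2N}(A,\tilde T)e_{2N}(\tilde S,B)$ is $+1$ on exactly $8$ of the $16$ pairs $(A,B)$ and $-1$ on the other $8$, so $w$ assumes each of the two values $\pm w(\tilde S,\tilde T)$ exactly $8$ times; by the first property these two values are $\tilde\zeta$ and $-\tilde\zeta$, whence $|w^{-1}(\tilde\zeta)|=8$.

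For the classification, note that $2\tilde S=S$ gives $N\tilde S=\tfrac{N}{2}S\in E[2]$ (and likewise $N\tilde T=\tfrac{N}{2}T$), so $(\tilde S,\tilde T)\sltwo(1+N,0;0,1+N)=((1+N)\tilde S,(1+N)\tilde T)$ is simply the translate of $(\tilde S,\tilde T)$ by the fixed element $g_0=(\tfrac{N}{2}S,\tfrac{N}{2}T)$ of $E[2]\times E[2]$. This $g_0$ has order $2$ and is nonzero because $S$ has exact order $N$, and $w$ is $g_0$-invariant since $e_{2N}((1+N)\tilde S,(1+N)\tilde T)=e_{2N}(\tilde S,\tilde T)^{(1+N)^2}=e_{2N}(\tilde S,\tilde T)$, as $(1+N)^2\equiv1\pmod{2N}$. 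Hence translation by $g_0$ is a fixed-point-free involution of the $8$-element set $w^{-1}(\tilde\zeta)$, which therefore falls into four similarity classes of size two.

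Finally I would verify the four representatives directly. Writing $U=\tfrac{N}{2}S$ and $V=\tfrac{N}{2}T$, so that $E[2]=\{O,U,V,U+V\}$, one checks that $(\tilde S,\tilde T)$, $(\tilde S,\tilde T)\sltwo(1,N;0,1)$, $(\tilde S,\tilde T)\sltwo(1,0;N,1)$, $(\tilde S,\tilde T)\sltwo(1,N;N,1)$ all lie in $L$ and all have $e_{2N}$-value $\tilde\zeta$ (the extra factors are $e_{2N}(\tilde S,\tilde S)^N$, $e_{2N}(\tilde T,\tilde T)^N$, $e_{2N}(\tilde S,\tilde T)^{N^2}$, which are $1$ by alternation and $N^2\equiv0\pmod{2N}$), so they are $\Gamma(2N)$-structures above $(S,T)$, and they differ from $(\tilde S,\tilde T)$ by $(O,O)$, $(O,U)$, $(V,O)$, $(V,U)$ respectively. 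Two of them are similar iff these differences differ by $g_0=(U,V)$; running through the six pairs and using $U\neq V$ and $U,V\neq O$ shows that no two are similar, so the four represent the four classes. The main obstacle is the Weil-pairing bookkeeping in the counting step: getting the two compatibility identities in exactly the right form and keeping straight which quantities are intrinsic to $(S,T)$ and which depend on the chosen lift, so that $|w^{-1}(\tilde\zeta)|$ comes out to $8$ rather than $0$ or $16$.
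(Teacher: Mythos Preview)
Your proof is correct and follows essentially the same strategy as the paper: parametrize the sixteen lifts by $E[2]\times E[2]$-translates of a fixed one and compute how $e_{2N}$ changes under such a translate, finding that the sign factor is a surjective homomorphism to $\mu_2$ so that the fibre over $\tilde\zeta$ has size eight. The paper does this by the explicit parametrization $(\epsilon_1,\epsilon_2,\epsilon_3,\epsilon_4)$ and the direct computation yielding $(-1)^{\epsilon_1+\epsilon_4}$, whereas you phrase the same count more structurally via the identification $E[2]^\perp=E[N]$ under $e_{2N}$; you also spell out the verification that the four listed representatives are pairwise non-similar, which the paper leaves as ``easy to see.'' One small remark: your parenthetical statement of the compatibility, ``$e_{2N}(P,2Q)=e_N(P,Q)$ for $P\in E[N]$, $Q\in E[2N]$,'' is garbled (the right-hand side is not defined for general $Q\in E[2N]$); what you actually use, and what is standard, is $e_{2N}(P,Q)=e_N(2P,Q)$ for $P\in E[2N]$, $Q\in E[N]$, or equivalently $e_{2N}(\tilde S,\tilde T)^2=e_N(2\tilde S,2\tilde T)=e_N(S,T)=\zeta$.
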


\begin{proof}
There are sixteen pairs $(\tilde S,\tilde T)$ satisfying $2\tilde S=S$ and $2\tilde T=T$. Choose one pair $(\tilde S,\tilde T)$.  Replacing $\tilde S$ by $\tilde S+S_{2}$ if necessary, we may assume $e_{2N}(\tilde S,\tilde T)=\tilde\zeta$.  Thus, we obtain at least one $\Gamma(2N)$-structure above $(S,T)$.  All the sixteen pairs $(\tilde S',\tilde T')$ satisfying $2\tilde S'=S$ and $2\tilde T'=T$ are
\[
(\tilde S',\tilde T')
=
(\tilde S,\tilde T) +(S_{2},T_{2})\sltwo(\epsilon_{1},\epsilon_{3};\epsilon_{2},\epsilon_{4}),
\quad \epsilon_{j}=0, \text{or } 1 \ (j=1,\dots,4).
\]
Now, using properties of the Weil pairing, we have
\begin{align*}
e_{2N}(\tilde S',\tilde T')
&=e_{2N}(\tilde S+\epsilon_{1}S_{2}+\epsilon_{2}T_{2},  
\tilde T+\epsilon_{3}S_{2}+\epsilon_{4}T_{2})
\\ 
&=e_{2N}(\tilde S,\tilde T) e_{2N}(\tilde S,T_{2})^{\epsilon_{4}}e_{2N}(S_{2},\tilde T)^{\epsilon_{1}}
=(-1)^{\epsilon_{1}+\epsilon_{4}}e_{2N}(\tilde S,\tilde T).
\end{align*}
Thus, $e_{2N}(\tilde S',\tilde T')=\tilde\zeta$ if and only if $\epsilon_{1}=\epsilon_{4}$.  In other words, there are eight pairs of $(\tilde S, \tilde T)$ above $(S,T)$.  Since $(\tilde S',\tilde T')\sim (\tilde S,\tilde T)$ if and only if $\epsilon_{1}=\epsilon_{4}$ and $\epsilon_{2}=\epsilon_{3}=0$ by definition, we see that there are four different classes up to similarity according to $(\epsilon_{2},\epsilon_{3})=(0,0),(1,0),(0,1)$, or $(1,1)$.
\end{proof}

In view of Lemma~\ref{lem:level-N-2N}, we define a subgroup between $\Gamma(N)$ and $\Gamma(2N)$ as follows.

\begin{definition}\label{def:Gamma^(N)(2N)}
Define the subgroup $\Gamma^{(N)}(2N)$ of $\Gamma(N)$ by
\[
\Gamma^{(N)}(2N) =\left\{\left.\sltwo(a,b;c,d) \in SL_{2}(\Z)\,\right|\,
a\equiv d\equiv 1 \bmod N
, b\equiv c\equiv 0\bmod 2N\right\}.
\]
\end{definition}

It is easy to see that we have
\[
\Gamma(N)\vartriangleright\Gamma^{(N)}(2N)\vartriangleright\Gamma(2N).
\]
Since $N$ is assumed even, we have
\begin{gather*}
\Gamma(N)/\Gamma^{(N)}(2N)
=\left\<\sltwo(1,N;0,1), \sltwo(1,0;N,1)\right\>
\simeq \Z/2\Z\times\Z/2\Z,
\\
\Gamma^{(N)}(2N)/\Gamma(2N)
=\left\<\sltwo(1+N,0;0,1+N)\right\>
\simeq \Z/2\Z.
\end{gather*}

\begin{remark}
(1) \ 
If $N$ is odd, $\Gamma^{(N)}(2N)$ coincide with $\Gamma(2N)$, and $[\Gamma(N):\Gamma^{(N)}(2N)]=[\Gamma(N):\Gamma(2N)]=6$.

(2) \ 
The group $\Gamma^{(N)}(2N)$ coincides with the transformation group appears in Hurwitz \cite{Hurwitz} in the case of $N\equiv0\bmod 4$.  If $N\equiv 2\bmod 4$, the group in~\cite{Hurwitz} is a different group. 
For comparison between our immersion and that of Hurwitz, see Appendix~\ref{appendix}.
\end{remark}

\begin{definition}[$\Gamma^{(N)}(2N)$-structure]%
\label{def:Gamma^(N)(2N)-structure}
Fix a primitive $N$th root of unity~$\zeta$, and choose $\tilde \zeta$ such that $\tilde\zeta^{2}=\zeta$.  
Let $(S,T)$ be a $\Gamma(N)$-structure. 
By a $\Gamma^{(N)}(2N)$-structure above the given $\Gamma(N)$-structure $(S,T)$ we mean an equivalence class of $\Gamma(2N)$-structure $(\tilde S,\tilde T)$ above $(S,T)$ modulo similarity.
\end{definition}

\begin{remark}\label{rmk:def:Gamma^(N)(2N)-structure}
Sometimes a $\Gamma(N)$-structure is defined as a symplectic isomorphism $\Z/N\Z\times \mu_{N}\to E[N]$, where the anti-symmetric pairing $\< \ , \ \>$ on $\Z/N\Z\times \mu_{N}$ is defined by
\[
\<(a_{1},\zeta_{1}),(a_{2},\zeta_{2})\> = \zeta_{2}^{a_{1}}/\zeta_{1}^{a_{2}},
\quad a_{i}\in \Z/N\Z, \ \zeta_{i}\in \mu_{N} \ (i=1,2).
\]
Define a map $d:\Z/2N\Z\times \mu_{2N}\to \Z/N\Z\times \mu_{N}$ by 
\[ d: (a\bmod 2N,\zeta^b)\mapsto (a\bmod N, \zeta^{2b}).
\]
Then a $\Gamma^{(N)}(2N)$-structure above a $\Gamma(N)$-structure is nothing but a pair of symplectic isomorphisms $\phi_{N}:\Z/N\Z\times \mu_{N}\to E[N]$ and $\phi_{2N}:\Z/2N\Z\times \mu_{2N}\to E[2N]$ that makes the following diagram commutative:
\[
\begin{tikzcd}
\Z/2N\Z\times \mu_{2N} \arrow[r,"\phi_{2N}"] \arrow[d, "d"'] & {E[2N]} \arrow[d, "{[2]}"] \\
\Z/N\Z\times \mu_{N} \arrow[r,"\phi_{N}"]                   & {E[N]}                    
\end{tikzcd}
\]
\end{remark}

We now fix a $\Gamma^{(N)}(2N)$-structure $(\tilde S, \tilde T)$ above $(S,T)$. 
Define
\[
\dd_{\tilde T}=\sum\limits_{n=0}^{N-1}\bigl\{\tilde T+nT\bigr\}.
\]

\begin{lemma}\label{lem:D-tilde-T}
\begin{enumerate}
\item
There is a function $\ph_{\tilde T}$ such that $\div \ph_{\tilde T}=\dd_{\tilde T}-N\{O\}$.  As a consequence, the divisor $\dd_{\tilde T}$ is linearly equivalent to $N\{O\}$.
\item
If $\tilde T'=\tilde T + T_{2}$, then we have
\(
\dd_{\tilde T'}=\dd_{\tilde T}.
\)
\end{enumerate}
\end{lemma}

\begin{proof}
(1) \ 
Since $\addition_{E}\dd_{\tilde T}=N\tilde T+\frac{1}{2}N(N-1)T=O$, the assertion follows immediately from Abel's Theorem (Theorem~\ref{th:abel}).  The assertion (2) follows immediately from the definition.
\end{proof}

\begin{remark}
We may still use $\dd_{T}=\sum_{n=1}^{N-1}\{nT\}$ to obtain an immersion to $\P^{N-1}$ even though $\addition_{E}\dd_{T}\neq O$.  
\end{remark}

\begin{lemma}\label{lem:choice-f-even}
Let $(\tilde S, \tilde T)$ be a $\Gamma^{(N)}(2N)$-structure above $(S,T)$.  Choose a function $F_{\tilde T}$ such that $\div F_{\tilde T}=D_{\tilde T}-N\{0\}$. Then, there is a unique function $f_{\tilde S,\tilde T}$ satisfying the condition
\begin{equation}\label{eq:f_ST}
\div f_{\tilde S,\tilde T}=\action{S}\dd_{\tilde T} - \dd_{\tilde T}, 
\quad f_{\tilde S,\tilde T}(\tilde S)
=\action{S}F_{\tilde T}(\tilde S)/F_{\tilde T}(\tilde S).
\end{equation}
Moreover $(S,f_{\tilde S,\tilde T})\in E[N](\dd_{\tilde T})$ satisfies the following.
\begin{enumerate}
\item $(S,f_{\tilde S,\tilde T})^{N}=1$.
\item $\action{T_{2}}f_{\tilde S,\tilde T}(\tilde S)
=-f_{\tilde S,\tilde T}(\tilde S)$.
\end{enumerate}
\end{lemma}

\begin{proof}
Choose an element $(S,\ph)\in E[N](N\{O\})$ such that $\ph(\tilde S)=1$, and define
\(
f_{\tilde S,\tilde T}=\ph\,\action{S}F_{\tilde T}/F_{\tilde T}.
\)
Then, $f_{\tilde S,\tilde T}$ satisfies the condition \eqref{eq:f_ST}. Since $f_{\tilde S,\tilde T}$ does not depend on the choice of $F_{\tilde T}$, $f_{\tilde S,\tilde T}$ is uniquely determined.  By Lemma~\ref{lem:E[N](NO)}\,(1), we have $(S,f_{\tilde S,\tilde T})^{N}=\bigl(S,\ph\,{\action{S}F_{\tilde T}}/{F_{\tilde T}}\bigr)^{N}=(S,\ph)^{N}= \ph(\tilde S)^{N}$=1, which proves (1).  The assertion (2) is an immediate consequence of Lemma~\ref{lem:E[N](NO)}\,(2).
\end{proof}

\begin{definition}\label{def:X_k-even}
Let $f_{\tilde S,\tilde T}$ be the function defined in Lemma~\ref{lem:choice-f-even}. 
Define $X^{(\tilde S,\tilde T)}_{k}\in \L(D_{\tilde T})$ indexed by $k\in \Z/N\Z$ by 
\[
X^{(\tilde S,\tilde T)}_{0} = 1,
 \quad
X^{(\tilde S,\tilde T)}_{k}
=\tau_{1}(S,f_{\tilde S,\tilde T})X^{(\tilde S,\tilde T)}_{k-1} 
\ (k=1,\dots, N-1).
\]
\end{definition}

\begin{lemma}\label{lemma:X_k-even}
\begin{enumerate}
\item 
$X^{(\tilde S,\tilde T)}_{k}$ is well-defined, i.e., 
\(
\tau_{1}(S,f_{\tilde S,\tilde T})X^{(\tilde S,\tilde T)}_{N-1} = X^{(\tilde S,\tilde T)}_{0}.
\) 
\item 
$\div X^{(\tilde S,\tilde T)}_{k} =\action{kS}\dd_{\tilde T} - \dd_{\tilde T}$ for $k\in \Z/N\Z$.
\item
The action of $(T,1)\in E[N](\dd_{\tilde T})$ is given by
\[
\tau_{1}(T,1)X^{(\tilde S,\tilde T)}_{k}
=\chi_{S}^{k}(T)X^{(\tilde S,\tilde T)}_{k}
=e_{N}(S,T)^{k}X^{(\tilde S,\tilde T)}_{k}.
\]
\item
$X^{(\tilde S,\tilde T)}_{k}$ is a basis of $\L^{1}(\dd_{\tilde T},\chi_{S}^{k})$ for $k\in \Z/N\Z$. 
\item
$X^{(\tilde S,\tilde T)}_{k}(-P)=X^{(\tilde S,\tilde T)}_{-k}(P)$ for any $P\in E$ and $k\in \Z/N\Z$.
\end{enumerate}
\end{lemma}

\begin{proof}
The proof is the same as Lemma~\ref{lem:X_k-odd}.
\end{proof}

{\def\frac#1#2{#1\!/#2}
\begin{lemma}\label{lem:X_k-2N-str}
\begin{enumerate}
\item
$X_{k}^{(\tilde S,\tilde T)}$ is unchanged if $(\tilde S,\tilde T)$ is replaced by $(\tilde S+S_{2},\tilde T+T_{2})$, i.e.,
\[
X_{k}^{(\tilde S+S_{2},\tilde T+T_{2})}= X_{k}^{(\tilde S,\tilde T)}, \quad k=0,\dots,N-1.
\]
In other words, the function $X_{k}^{(\tilde S,\tilde T)}$ is uniquely determined by the $\Gamma^{(N)}(2N)$-struc\-ture above $(S,T)$, 
\item
If $(\tilde S,\tilde T)$ is replaced by $(\tilde S+T_{2},\tilde T)$, then
\[
X_{k}^{(\tilde S+T_{2},\tilde T)}
= (-1)^{k}X_{k}^{(\tilde S,\tilde T)}, \quad k=0,\dots,N-1.
\]
\item
If $(\tilde S,\tilde T)$ is replaced by $(\tilde S,\tilde T+S_{2})$, then
\[
X_{k}^{(\tilde S,\tilde T+S_{2})}
=X_{\frac{N}{2}+k}^{(\tilde S,\tilde T)}
/X_{\frac{N}{2}}^{(\tilde S,\tilde T)}, \quad k=0,\dots,N-1.
\]
\end{enumerate}
\end{lemma}

\begin{proof}
(1) \ 
It suffices to show that the function $f_{\tilde S,\tilde T}$ appearing in Lemma~\ref{lem:choice-f-even} is unchanged.  First, we have $D_{\tilde T+T_{2}}=D_{\tilde T}$ by Lemma~\ref{lem:D-tilde-T}\,(2).  According to the proof of Lemma~\ref{lem:choice-f-even}, $f_{\tilde S+S_{2},\tilde T}$ is defined by choosing $(S,\ph')\in E[N](N\{O\})$ such that $\ph'(\tilde S+S_{2})=1$ and letting $f_{\tilde S+S_{2}}=\ph'\,\action{S}F_{\tilde T}/F_{\tilde T}$.  But, by Lemma~\ref{lem:E[N](NO)}\,(2), $\ph'(\tilde S+S_{2})=\ph'(\tilde S)$.  This means we may use the same $\ph$ as the one we used to define $f_{\tilde S,\tilde T}$, and thus $f_{\tilde S+S_{2},\tilde T}=f_{\tilde S,\tilde T}$.

(2) \ 
By Lemma~\ref{lem:E[N](NO)}\,(2), we have $\action{T_{2}}f_{\tilde S,\tilde T}(\tilde S)=e_{2}(T_{2},S_{2})f_{\tilde S,\tilde T}(\tilde S)=-f_{\tilde S,\tilde T}(\tilde S)$.  So, the function $f=-\action{T_{2}}f_{\tilde S,\tilde T}$ satisfies $\div f = \action{S+T_{2}}\dd_{\tilde T} - \action{T_{2}}\dd_{\tilde T}=\action{S}\dd_{\tilde T} - \dd_{\tilde T}$, and $f(\tilde S+T_{2})=f_{\tilde S,\tilde T}(\tilde S)=\action{S}F_{\tilde T}(\tilde S)/F_{\tilde T}(\tilde S)$.  This implies that $-\action{T_{2}}f_{\tilde S,\tilde T}$ is nothing but the function $f_{\tilde S+T_{2},\tilde T}$.  The assertion follows from this immediately.

(3) \ 
Let $F_{\tilde T}$ be a function satisfying $\div F_{\tilde T}=\dd_{\tilde T}-N\{0\}$.  Since $\div X_{\frac{N}{2}}^{(\tilde S,\tilde T)}=\action{S_{2}}\dd_{\tilde T}-\dd_{\tilde T}=\dd_{\tilde T+S_{2}}-\dd_{\tilde T}$, we have $\div F_{\tilde T}X_{\frac{N}{2}}^{(\tilde S,\tilde T)}=\dd_{\tilde T+S_{2}}-N\{0\}$.
So, $f_{\tilde S, \tilde T+S_{2}}$ is defined by
\(
f_{\tilde S, \tilde T+S_{2}}
=f_{1}\cdot\action{S}(F_{\tilde T}X_{\frac{N}{2}}^{(\tilde S,\tilde T)})/
(F_{\tilde T}X_{\frac{N}{2}}^{(\tilde S,\tilde T)})
\)
where $f_{1}$ is a function satisfying $\div f_{1}=N\{S\}-N\{O\}$ and $f_{1}(\tilde S)=1$.  Then, we have
\[
f_{\tilde S, \tilde T+S_{2}}
=f_{1}\cdot{\action{S}F_{\tilde T}}/{F_{\tilde T}}\cdot 
{\action{S}X_{\frac{N}{2}}^{(\tilde S,\tilde T)}}
/{X_{\frac{N}{2}}^{(\tilde S,\tilde T)}}
=
{f_{\tilde S,\tilde T}\action{S}X_{\frac{N}{2}}^{(\tilde S,\tilde T)}}
/{X_{\frac{N}{2}}^{(\tilde S,\tilde T)}}
={X_{\frac{N}{2}+1}^{(\tilde S,\tilde T)}}
/{X_{\frac{N}{2}}^{(\tilde S,\tilde T)}}.
\]
This implies
\[
X_{1}^{(\tilde S,\tilde T+S_{2})}=f_{\tilde S, \tilde T+S_{2}}
=X_{\frac{N}{2}+1}^{(\tilde S,\tilde T)}/X_{\frac{N}{2}}^{(\tilde S,\tilde T)},
\]
and the assertion (2) follows by induction on~$k$.
\end{proof}

\begin{theorem}\label{thm:proj-coord-even}
Let $E$ be an elliptic curve, and $E\hookrightarrow\P^{N-1}$ an immersion as an elliptic normal curve of even degree~$N$ via the complete linear system $|N\{O\}|$.  Choose a primitive $N$th root of unity $\zeta$, and then choose $\tilde \zeta$ such that $\tilde\zeta^{2}=\zeta$.  Let $(\tilde S,\tilde T)$ be a $\Gamma^{(N)}(2N)$-structure above a $\Gamma(N)$-structure $(S,T)$. Then, $(\tilde S,\tilde T)$ determines a unique coordinate system of $\P^{N-1}$ such that the translation maps $\tau_{S}$, $\tau_{T}$, and the multiplication-by-$(-1)$ map $[-1]$ are given by the matrices $M_{T}$, $M_{S}$ and $M_{[-1]}$ in \eqref{eq:matrices}, respectively.

For a given $\Gamma(N)$-structure $(S,T)$, there are four different choices of such coordinate systems related by the change of coordinates of $\P^{N-1}$ given by the transition matrices generated by the following two\/:
\[
M_{S}^{\frac{N}{2}}=
\setlength{\arraycolsep}{4pt}
\renewcommand{\arraystretch}{0.64}
\left[
\begin{array}{*3c|*3c}
& & & & &\\
& O & & & I_{\frac{N}{2}} & \\
& & & & &\\
\hline
& & & & &\\
& I_{\frac{N}{2}} & & & O  \\
& & & & &
\end{array}
\right],
\quad
M_{T}^{\frac{N}{2}}=
\setlength{\arraycolsep}{4pt}
\renewcommand{\arraystretch}{0.72}
\left[
\begin{array}{*5r}
1 & 0 &  &   &   \\
0 & -1&  & O &  \\
  &   & \ddots &  & \\ 
  & O &  & 1 & 0 \\
  &   &  & 0& -1
\end{array}
\right], 
\]
\end{theorem}

\begin{proof}
The existence of such a coordinate system follows from Lemma~\ref{lemma:X_k-even}, and the uniqueness follows from Lemma~\ref{lem:X_k-2N-str}\,(1). The last part is a consequence of  Lemma~\ref{lem:X_k-2N-str}\,(2) and (3).
\end{proof}

When the choice of $(\tilde S,\tilde T)$ is understood, we write $X^{(\tilde S,\tilde T)}_{k}=X_{k}$ for simplicity.

\begin{proposition}\label{prop:quad-eqs-even}
The notation being as above, we have the following.
\begin{align*}
&\I^{2}(\dd_{\tilde T}) 
= \textstyle{\bigoplus_{k\in\Z/N\Z}}\I^{2}(\dd_{\tilde T},\chi_{S}^{k}),\\
&\I^{2}(\dd_{\tilde T},\chi_{S}^{k})=\I^{2}(\dd_{\tilde T})\cap \<X_{i}X_{j}\mid i+j\equiv k \mod N\>,
\\
&\dim \I^{2}(\dd_{\tilde T},\chi_{S}^{k}) =
\begin{cases}
(N-2)/2 & \text{if $k\equiv 0\bmod 2$,}\\
(N-4)/2 & \text{if $k\equiv 1\bmod 2$,}
\end{cases}
\\
&\dim \I^{2}(\dd_{\tilde T}) =N(N-3)/2.
\end{align*}
\end{proposition}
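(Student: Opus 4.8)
The plan is to reduce everything to the eigenspace decomposition of $\L^{\asterisk}(\dd_{\tilde T})$ under the action of the cyclic group $C=\langle T\rangle$, exactly as in the odd case (Proposition \ref{prop:quad-eqs-odd}), and then invoke Proposition \ref{prop:I^2} to compute the dimensions. First I would establish the direct-sum decomposition $\I^{2}(\dd_{\tilde T})=\bigoplus_{k}\I^{2}(\dd_{\tilde T},\chi_{S}^{k})$. This is a formal consequence of the fact that $\I^{\asterisk}(\dd_{\tilde T})$ is a graded ideal for the $\N\times\Hom(C,\mu_{N})$-grading on $\Sym^{\asterisk}(\dd_{\tilde T})$ (the grading discussed after Definition \ref{def:L(D,chi)}): the canonical surjection $\Sym^{\asterisk}(\dd_{\tilde T})\to\L^{\asterisk}(\dd_{\tilde T})$ of Proposition \ref{prop:proj-normality}(1) respects the $C$-action, hence its kernel is $C$-stable, hence decomposes into $\chi$-eigenspaces. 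In degree $2$, since $X_{i}\in\L^{1}(\dd_{\tilde T},\chi_{S}^{i})$ by Proposition \ref{prop:X_k-even}(2), the monomial $X_{i}X_{j}$ lies in the $\chi_{S}^{i+j}$-eigenspace, so $\Sym^{2}(\dd_{\tilde T},\chi_{S}^{k})=\langle X_{i}X_{j}\mid i+j\equiv k\bmod N\rangle$ and the second displayed identity follows.

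Next I would compute the eigenspace dimensions via Proposition \ref{prop:I^2}, which gives $\dim\I^{2}(\dd_{\tilde T},\chi_{S}^{k})=(N-4+r_{\chi_{S}^{k}})/2$, where $r_{\chi_{S}^{k}}$ counts characters $\chi'\in\Hom(C,\mu_{N})$ with $\chi'^{2}=\chi_{S}^{k}$. Since $C\simeq\Z/N\Z$ and $\Hom(C,\mu_{N})\simeq\Z/N\Z$ with $\chi_{S}$ a generator (here $\chi_{S}$ has exact order $N$ because $e_{N}(S,T)=\zeta$ is a primitive $N$th root of unity, $(S,T)$ being a $\Gamma(N)$-structure), squaring is the endomorphism $x\mapsto 2x$ of $\Z/N\Z$. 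As $N$ is even, the image of this map is $2\Z/N\Z$, i.e. the even residues, and every element of the image has exactly two square roots while odd residues have none. Translating back: $r_{\chi_{S}^{k}}=2$ if $k$ is even and $r_{\chi_{S}^{k}}=0$ if $k$ is odd. Substituting gives $\dim\I^{2}(\dd_{\tilde T},\chi_{S}^{k})=(N-2)/2$ for $k$ even and $(N-4)/2$ for $k$ odd, which is the third claim.

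Finally the total dimension follows by summing over $k\in\Z/N\Z$: there are $N/2$ even values and $N/2$ odd values of $k$, so
\[
\dim\I^{2}(\dd_{\tilde T})=\frac{N}{2}\cdot\frac{N-2}{2}+\frac{N}{2}\cdot\frac{N-4}{2}
=\frac{N}{4}\bigl((N-2)+(N-4)\bigr)=\frac{N(2N-6)}{4}=\frac{N(N-3)}{2},
\]
consistent with Proposition \ref{prop:proj-normality}(2). I do not expect a serious obstacle here; the only points requiring care are (i) verifying that $\chi_{S}$ generates $\Hom(C,\mu_{N})$, so that the count of square roots is the clean $2$-or-$0$ dichotomy rather than something depending on $\gcd$'s, and (ii) being careful that Proposition \ref{prop:I^2} applies verbatim to $\dd=\dd_{\tilde T}$, which it does since $\dd_{\tilde T}$ is an effective divisor of degree $N$ invariant under translation by $C$ (by construction) and linearly equivalent to $N\{O\}$ (Lemma \ref{lem:D-tilde-T}(1)). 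The essential structural input that distinguishes the even case from the odd case is entirely concentrated in the parity-dependence of $r_{\chi_{S}^{k}}$.
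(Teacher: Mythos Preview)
Your proposal is correct and follows exactly the approach implicit in the paper: the paper states this proposition without proof, evidently intending it as a direct consequence of Proposition~\ref{prop:I^2} together with the $C$-eigenspace decomposition, and your argument spells out precisely that deduction, including the key parity computation of $r_{\chi_{S}^{k}}$ that distinguishes the even case from Proposition~\ref{prop:quad-eqs-odd}.
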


\begin{proof}
The contents of \S3.2 of \cite{Velu:1978} are still valid even if $N$ is even. In this case there are two $\chi'\in \Hom(C,\mu_{n})$ such that $\chi'^{2}=\chi_{S}^{k}$ if $k\equiv 0\bmod 2$, and there is no such $\chi'$ if $k\equiv 1\bmod 2$.  This shows $\dim \I^{2}(\dd_{\tilde T},\chi_{S}^{k})$ equals as stated by Corollaire~3.6 of~\cite{Velu:1978}.
\end{proof}

In order to find a basis of $\I^{2}(\dd_{\tilde T})$, it suffices to find a basis of $\I^{2}(\dd_{\tilde T},\chi_{0})$ and $\I^{2}(\dd_{\tilde T},\chi_{S})$, which we will do for the case $K=\C$, using theta functions.
} 

\section{Theta functions}\label{sec:theta}

From now on, we assume that the base field is the field of complex numbers~$\C$.  We construct functions $X_{0},X_{1},\dots,X_{N-1}$ defined and described in \S4 using theta functions.  Then, we find quadratic relations among them from the classical relations among theta functions.

\begin{definition}\label{def:theta}
For a pair of real numbers $(p,q)$, we define the theta function $\theta_{(p,q)}(z,\tau)$ with characteristic $(p,q)$ by
\[
\theta_{(p,q)}(z,\tau):=\sum_{n\in\Z}
\e\bigl({\tfrac{1}{2}(n+p)^2\tau + (n+p)(z+q)}\bigr),
\]
where $z\in\C$ and $\tau\in\H=\{\tau\in\C\mid \Im\tau>0\}$, and $\e(x)=e^{2\pi i x}$.  
\end{definition} 

This conforms with the definition in Mumford \cite[Ch.~I.~\S3]{Mumford:Tata-I}.  
We have the following fundamental formulas.
\begin{proposition}\label{prop:theta-basic}
Suppose $p,q,r,s\in \R$, and $l,m\in\Z$. Then, we have
\begin{enumerate}
\setlength{\itemsep}{\medskipamount}
\item $\theta_{(p,q)}(z+s,\tau) = \theta_{(p,q+s)}(z,\tau)$.
\item $\theta_{(p,q)}(z+r\tau,\tau)= 
\e(-\frac{1}{2}r^{2}\tau-rz-rq)\theta_{(p+r,q)}(z,\tau)$.
\item $\theta_{(p+l,q+m)}(z,\tau)
= \e(pm)\,\theta_{(p,q)}(z,\tau)$.
\end{enumerate}
\end{proposition}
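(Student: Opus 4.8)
The plan is to verify all three identities by direct substitution into the defining series of Definition~\ref{def:theta}, relying only on the absolute and locally uniform convergence of that series on $\C\times\H$ to justify reordering and regrouping summands. No further analytic input is needed.

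For (1), I would substitute $z+s$ for $z$: the general term becomes $\e\bigl(\tfrac12(n+p)^2\tau+(n+p)(z+s+q)\bigr)$, and reading $z+s+q$ as $z+(q+s)$ immediately identifies the sum with $\theta_{(p,q+s)}(z,\tau)$.

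For (2), I would substitute $z+r\tau$ for $z$ and complete the square: the $\tau$-part of the general exponent, $\tfrac12(n+p)^2\tau+(n+p)r\tau$, equals $\tfrac12(n+p+r)^2\tau-\tfrac12 r^2\tau$, while $(n+p)(z+q)$ rewrites as $(n+p+r)(z+q)-r(z+q)$. Factoring the two $n$-independent exponentials $\e(-\tfrac12 r^2\tau)$ and $\e(-r(z+q))$ out of the sum leaves precisely $\theta_{(p+r,q)}(z,\tau)$, which gives the claimed formula. For (3), the shift $(p,q)\mapsto(p+l,q+m)$ with $l,m\in\Z$ is handled by reindexing $n\mapsto n-l$ in the sum and then observing that the extra exponent $(n+p)m=nm+pm$ contributes a factor $\e(pm)$, since $nm\in\Z$ forces $\e(nm)=1$; this factor is independent of $n$ and pulls out of the sum, leaving $\theta_{(p,q)}(z,\tau)$.

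The main — and essentially only — point needing care is the completing-the-square step in (2); part (1) is merely a relabelling of the second characteristic, and part (3) is a change of summation variable together with the observation that $\e$ of an integer is $1$. I do not expect any genuine obstacle here, only routine bookkeeping of exponents.
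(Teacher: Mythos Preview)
Your proposal is correct: each identity follows by the direct series manipulation you describe, and the bookkeeping in (2) and (3) is accurate. The paper itself states this proposition without proof, treating it as a standard fact about theta functions with characteristics (cf.\ Mumford, \emph{Tata Lectures on Theta~I}); your argument is exactly the routine verification one would supply.
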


Let $N$ be a positive integer.  Although we are interested mainly in the case where $N$ is even, we do not restrict ourselves to even $N$ until \S5.4.
\begin{definition}\label{def:theta-N}
Let $N$ be a positive integer.  For an integer or a half-integer~$k$, define 
\begin{align*}
\theta^{(N)}_k(z,\tau)
 &:=\theta_{(\frac{1}{2}-\frac{k}{N},\frac{N}{2})}(Nz,N\tau)
\\
&\phantom{:}
=\sum_{n\in\Z}\e\Bigl(
{\tfrac{1}{2}N\bigl(n-\tfrac{k}{N}+\tfrac1{2}\bigr)^2\tau
+N\bigl(n-\tfrac{k}{N}+\tfrac1{2}\bigr)\bigl(z+\tfrac12\bigr)}\Bigr).
\end{align*}
\end{definition}

It is easy to verify that $\theta^{(N)}_{k+N}(z,\tau)=
\theta^{(N)}_k(z,\tau)$, and thus $\theta^{(N)}_k(z,\tau)$ depends only on the class $k\bmod N$. 

\subsection{Basic properties of $\theta^{(N)}_{k}(z,\tau)$ as a function of~$z$}\label{subsec:basic_properties}

First, we fix a positive integer $N$ and a point $\tau\in \H$.  
\begin{proposition}\label{prop:theta-basic2}
For any positive integer $N\in \N$, and any integer or half-integer $k\in \frac{1}{2}\Z$, the following relations hold.
\begin{enumerate}%
[itemsep=\smallskipamount]
\item $\theta^{(N)}_k(z+1,\tau) = (-1)^{N+2k}\,\theta^{(N)}_k(z,\tau)$.
\item $\theta^{(N)}_k(z+\tau,\tau) 
= (-1)^{N}\e\bigl(-\frac{N}{2}\tau - Nz\bigr)\,\theta^{(N)}_k(z,\tau)$.
\item $\theta^{(N)}_k\bigl(z+\tfrac{1}{N},\tau\bigr)
= -\e\bigl(-\frac{k}{N}\bigr)\theta^{(N)}_k(z,\tau)$.
\item $\theta^{(N)}_k\bigl(z+\tfrac{\tau}{N},\tau\bigr)
= -\e\bigl(-\frac{\tau}{2N} -z\bigr)\,\theta^{(N)}_{k-1}(z,\tau)$.
\item  $\theta^{(N)}_k\bigl(z+\tfrac{\tau}{2N},\tau\bigr)
= \e\bigl(-\tfrac{\tau}{8N} -\tfrac{z}{2}-\tfrac{1}{4}\bigr)\,\theta^{(N)}_{k-\frac{1}{2}}(z,\tau)$.
\item $\theta^{(N)}_k(-z,\tau) = (-1)^{N+2k}\theta^{(N)}_{-k}(z,\tau)$.
\end{enumerate}
\end{proposition}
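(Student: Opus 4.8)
The plan is to deduce all six identities from the basic transformation formulas of Proposition~\ref{prop:theta-basic} by means of the defining relation $\theta^{(N)}_k(z,\tau)=\theta_{(p,q)}(Nz,N\tau)$ with $p=\tfrac12-\tfrac kN$ and $q=\tfrac N2$, together with the observation that decreasing the index $k$ by $\delta$ shifts $p$ to $p+\tfrac\delta N$ and leaves $q$ unchanged. At the outset I would record two elementary facts about $\e(x)=e^{2\pi ix}$ that are valid for $k\in\tfrac12\Z$, namely $\e(k)=\e(-k)=(-1)^{2k}$ and $\e(\pm\tfrac N2)=(-1)^N$; with these in hand each step below becomes a one-line computation, and essentially all of the work is the sign bookkeeping that arises when $k$ is a half-integer.

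For (1) and (3): a translation $z\mapsto z+t$ with $Nt$ an integer (here $Nt=N$, resp.\ $Nt=1$) becomes $Nz\mapsto Nz+Nt$, so Proposition~\ref{prop:theta-basic}(1) absorbs it into the second characteristic, $q\mapsto q+Nt$, and then Proposition~\ref{prop:theta-basic}(3) with $l=0$, $m=Nt$ extracts the constant $\e(p\,Nt)=\e\bigl((\tfrac12-\tfrac kN)Nt\bigr)$; evaluating at $Nt=N$ and $Nt=1$ yields $(-1)^{N+2k}$ and $-\e(-\tfrac kN)$ respectively. For (2): $z\mapsto z+\tau$ becomes $Nz\mapsto Nz+1\cdot(N\tau)$, so Proposition~\ref{prop:theta-basic}(2) with $r=1$ applies directly, producing the factor $\e\bigl(-\tfrac N2\tau-Nz-\tfrac N2\bigr)$ and a harmless shift $p\mapsto p+1$ (neutral by Proposition~\ref{prop:theta-basic}(3)); using $\e(-\tfrac N2)=(-1)^N$ gives the stated formula.

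For (4) and (5): the translations $z\mapsto z+\tfrac{\tau}{N}$ and $z\mapsto z+\tfrac{\tau}{2N}$ become $Nz\mapsto Nz+r(N\tau)$ with $r=\tfrac1N$ and $r=\tfrac1{2N}$. Proposition~\ref{prop:theta-basic}(2) then contributes the exponential $\e\bigl(-\tfrac12r^2N\tau-rNz-rq\bigr)$ and the shift $p\mapsto p+r$, i.e.\ $k\mapsto k-Nr$, which is $k\mapsto k-1$ and $k\mapsto k-\tfrac12$ respectively; substituting $q=\tfrac N2$ reproduces exactly the stated factors $-\e(-\tfrac{\tau}{2N}-z)$ and $\e(-\tfrac{\tau}{8N}-\tfrac z2-\tfrac14)$, the leading $-1$ in (4) being $\e(-\tfrac12)$.

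For (6), which is not covered by Proposition~\ref{prop:theta-basic}, I would first establish the reflection identity $\theta_{(p,q)}(-z,\tau)=\theta_{(-p,-q)}(z,\tau)$ directly from Definition~\ref{def:theta} by substituting $n\mapsto-n$ in the defining series. Applying it gives $\theta^{(N)}_k(-z,\tau)=\theta_{(-p,-q)}(Nz,N\tau)$, and since $(-p,-q)=(p'-1,\;q'-N)$ where $(p',q')=\bigl(\tfrac12+\tfrac kN,\,\tfrac N2\bigr)$ is the characteristic attached to $\theta^{(N)}_{-k}$, Proposition~\ref{prop:theta-basic}(3) with $l=-1$, $m=-N$ identifies it with $\e(-Np')\,\theta^{(N)}_{-k}(z,\tau)=\e\bigl(-\tfrac N2-k\bigr)\theta^{(N)}_{-k}(z,\tau)=(-1)^{N+2k}\theta^{(N)}_{-k}(z,\tau)$. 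I do not expect any genuine obstacle here; the single point that demands care throughout is the consistent treatment of the half-integer case via the identities for $\e$ recorded at the start.
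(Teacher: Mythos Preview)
Your proposal is correct and follows exactly the approach the paper indicates: the paper's proof simply states that the formulas ``follow easily from the definition and Proposition~\ref{prop:theta-basic},'' and you have carefully spelled out precisely those computations, including the extra reflection identity $\theta_{(p,q)}(-z,\tau)=\theta_{(-p,-q)}(z,\tau)$ needed for (6), which is the part coming ``from the definition.'' Your sign bookkeeping for half-integer $k$ via $\e(k)=(-1)^{2k}$ is accurate, and each of the six derivations checks out.
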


\begin{proof}
These formulas follow easily from the definition and Proposition~\ref{prop:theta-basic}.  
See also \cite{Mumford:Tata-I} for details.
\end{proof}

We  denote $\theta^{(N)}_k(z,\tau)$ simply by $\theta_k(z)$ if no confusion arises.

\begin{lemma}\label{lem:zeros}
Let $Z^{(N)}_{k}$ be the set of zeros of $\theta_{k}(z)$. 
\begin{enumerate} 
\item If $N$ is odd, then we have 
$Z^{(N)}_{k}=\left\{\left.\frac{m}{N}+\frac{k}{N}\tau+n\tau\,\right|\, m,\,n\in\Z\right\}$. 
\[
\includegraphics[scale=0.8]{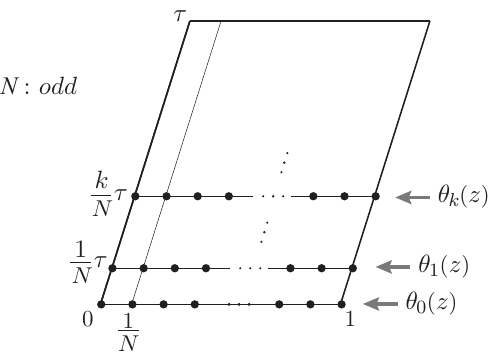}
\]
\item If $N$ is even, then 
$Z^{(N)}_{k}=\left\{\left.\frac{1}{2N}+\frac{m}{N}+\frac{k}{N}\tau+n\tau\,\right|\, m,\,n\in\Z\right\}$.
\[
\includegraphics[scale=0.8]{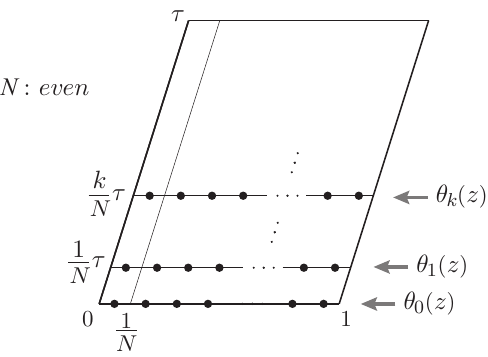}
\]
\end{enumerate}
\end{lemma}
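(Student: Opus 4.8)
The plan is to compute the zero set of $\theta_k(z) = \theta^{(N)}_k(z,\tau)$ directly from the classical fact about the zeros of a theta function with characteristics, and then track how the substitution $z \mapsto Nz$, $\tau \mapsto N\tau$ in Definition~\ref{def:theta-N} transforms that zero set. Recall that for the standard theta function $\theta_{(p,q)}(w,\sigma)$ on $\C \times \H$, the divisor of zeros (as a function of $w$ for fixed $\sigma$) is the single orbit
\[
\bigl\{\, w_{0} + l + m\sigma \mid l,m\in\Z \,\bigr\}, \qquad
w_{0} = \tfrac{1}{2} - q + \bigl(\tfrac{1}{2} - p\bigr)\sigma,
\]
obtained from the product expansion of $\theta$ (Jacobi triple product) together with the quasi-periodicity relations in Proposition~\ref{prop:theta-basic}(1),(2); see Mumford \cite[Ch.~I]{Mumford:Tata-I}. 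So first I would quote this, applied with $p = \tfrac12 - \tfrac{k}{N}$, $q = \tfrac{N}{2}$, $w = Nz$, and $\sigma = N\tau$, to get that $\theta_{(\frac12 - \frac kN,\frac N2)}(Nz, N\tau)$ vanishes exactly when $Nz \in w_0 + \Z + N\tau\Z$, where $w_0 = \tfrac12 - \tfrac N2 + \tfrac{k}{N}(N\tau) = \tfrac12 - \tfrac N2 + k\tau$.

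The second step is purely bookkeeping: divide through by $N$. From $Nz = \tfrac12 - \tfrac N2 + k\tau + l + mN\tau$ with $l,m\in\Z$ we get
\[
z = \frac{1}{2N} - \frac{1}{2} + \frac{k}{N}\tau + \frac{l}{N} + m\tau
  = \frac{1}{2N} + \frac{l - \tfrac N2}{N} + \frac{k}{N}\tau + m\tau .
\]
Now the parity of $N$ enters. If $N$ is even, $\tfrac N2 \in \Z$, so as $l$ runs over $\Z$ the quantity $l - \tfrac N2$ also runs over $\Z$; renaming it $m'$, the zero set is $\bigl\{ \tfrac{1}{2N} + \tfrac{m'}{N} + \tfrac{k}{N}\tau + m\tau \mid m',m\in\Z\bigr\}$, which is exactly case~(2). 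If $N$ is odd, write $k$ as an integer first (the odd case in the lemma is stated only for integer $k$, matching its use for the functions $X_k$); here $\tfrac N2 = \tfrac{N-1}{2} + \tfrac12$, and the half-integer shift $-\tfrac12$ coming from $-\tfrac{\tfrac N2}{N}$ combines with the half-integer $\tfrac{1}{2N}$ relative to the lattice $\tfrac1N\Z$ — more precisely $\tfrac{1}{2N} - \tfrac12 = \tfrac{1-N}{2N} \in \tfrac1N\Z$ since $N$ is odd — so the $\tfrac{1}{2N}$ is absorbed and one obtains $\bigl\{\tfrac{m}{N} + \tfrac kN\tau + n\tau\bigr\}$, which is case~(1). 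I would present this absorption explicitly: $\tfrac{1}{2N} + \tfrac{l}{N} - \tfrac12 = \tfrac{2l + 1 - N}{2N}$, and $2l + 1 - N$ is even when $N$ is odd, so this lies in $\tfrac1N\Z$ and ranges over all of it as $l$ varies.

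Alternatively — and this may be cleaner to write — I would avoid quoting the divisor of the generic theta function and instead derive the zero set intrinsically from Proposition~\ref{prop:theta-basic2}. The relations (1)–(4) there say that $\theta_k(z)$ is a nonzero holomorphic function with explicit multipliers under $z \mapsto z+1$ and $z \mapsto z+\tau$, hence (by the argument principle / residue count of $d\log\theta_k$ around a fundamental parallelogram for the lattice $\Z + \Z\tau$) it has exactly one zero modulo $\Z + \Z\tau$; then one only needs to exhibit a single zero. For $N$ even one checks $\theta_k\bigl(\tfrac{1}{2N} + \tfrac kN\tau\bigr) = 0$ by observing that this point is, via relations (3) and (4) of Proposition~\ref{prop:theta-basic2}, translated from $\theta_0\bigl(\tfrac{1}{2N}\bigr)$ up to a nonzero factor, and $\theta_0\bigl(\tfrac{1}{2N}\bigr) = \theta_{(\frac12,\frac N2)}\bigl(\tfrac12, N\tau\bigr)$, which vanishes because $\theta_{(\frac12,\ast)}$ is an odd-type theta vanishing at the half-period; for $N$ odd one checks instead that $\theta_k\bigl(\tfrac kN\tau\bigr)=0$, reducing to $\theta_0(0) = \theta_{(\frac12,\frac N2)}(0,N\tau)$, which again vanishes since a theta with characteristic $p = \tfrac12$ is odd in $w$. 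The main obstacle in either route is keeping the half-integer characteristics straight — the characteristic $\bigl(\tfrac12 - \tfrac kN, \tfrac N2\bigr)$ has a $\tfrac12$ that behaves differently depending on the parity of $N$, and the $q$-entry $\tfrac N2$ is an integer or a half-integer accordingly — so I would be careful to record, once and for all, that $\theta_{(p,q)}(w,\sigma)$ vanishes at $w = 0$ precisely when $p \in \tfrac12 + \Z$ (using Proposition~\ref{prop:theta-basic2}(6) or the oddness), and then let the quasi-periodicity relations (3),(4) propagate that single zero to the full lattice, renaming index sets at the end to match the stated form.
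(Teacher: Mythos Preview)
Your first route is exactly the paper's approach: the paper's proof is the single sentence ``It follows easily from the fact that the set of zeros of $\theta_{(p,q)}(z,\tau)$ is given by $\{(l+\tfrac12-p)\tau + (m+\tfrac12-q)\mid l,m\in\Z\}$,'' and your computation spells out precisely the substitution $(p,q,w,\sigma)=(\tfrac12-\tfrac kN,\tfrac N2,Nz,N\tau)$ and the parity bookkeeping that the paper leaves implicit.

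One slip in your alternative route: the argument-principle count for $\theta_k$ over a fundamental domain of $\Z + \tau\Z$ gives $N$ zeros, not one, since the multiplier in Proposition~\ref{prop:theta-basic2}(2) contains $\e(-Nz)$. What is true is that there is a single zero modulo the finer lattice $\tfrac{1}{N}\Z + \tau\Z$, and relation~(3) of Proposition~\ref{prop:theta-basic2} then propagates that one zero to the $N$ distinct zeros modulo $\Z + \tau\Z$; so the strategy works once the count is stated correctly. (Also, your parenthetical ``$\theta_{(p,q)}(w,\sigma)$ vanishes at $w=0$ precisely when $p\in\tfrac12+\Z$'' needs the additional condition $q\in\tfrac12+\Z$, which happens to hold in the odd-$N$ case since $q=\tfrac N2$.)
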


\begin{proof}
These follow easily from the fact that the set of  zeros of $\theta_{(p,q)}(z,\tau)$ is given by $\{(l+\frac{1}{2}-p)\tau+
(m+\frac{1}{2}-q)\mid l,m\in\Z\}$.
\end{proof}

\begin{lemma}\label{lem:lin-indep}
$\theta_{0}(z), \theta_{1}(z),\dots,\theta_{N-1}(z)$ are linearly independent over $\C$.
\end{lemma}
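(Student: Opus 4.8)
The plan is to exploit the quasi-periodicity of $\theta_{k}(z)$ under the translation $z\mapsto z+\tfrac1N$ recorded in Proposition~\ref{prop:theta-basic2}(3). That identity says precisely that $\theta_{k}$ is an eigenfunction of the $\C$-linear operator $T$ on holomorphic functions of $z$ defined by $(Tf)(z)=f(z+\tfrac1N)$, with eigenvalue $\lambda_{k}=-\e(-\tfrac kN)$. As $k$ runs over $0,1,\dots,N-1$, the number $\e(-\tfrac kN)=e^{-2\pi i k/N}$ runs over the $N$ distinct $N$th roots of unity, so the eigenvalues $\lambda_{0},\dots,\lambda_{N-1}$ are pairwise distinct.

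First I would record that no $\theta_{k}$ vanishes identically: by Lemma~\ref{lem:zeros} the zero set of $\theta_{k}$ is a single coset of the lattice $\Z+\Z\tau$ in $\C$, in particular a discrete, hence proper, subset, so $\theta_{k}\not\equiv0$. (Equivalently, one can read the nonvanishing of the leading Fourier coefficient straight off Definition~\ref{def:theta-N}.) Then the lemma is the standard fact that nonzero eigenvectors belonging to distinct eigenvalues of a linear operator are linearly independent. To keep this self-contained: given a relation $\sum_{k=0}^{N-1}c_{k}\theta_{k}=0$, apply $T^{j}$ for $j=0,1,\dots,N-1$ to obtain $\sum_{k=0}^{N-1}c_{k}\lambda_{k}^{\,j}\theta_{k}=0$ for each such $j$; invertibility of the Vandermonde matrix $(\lambda_{k}^{\,j})_{0\le j,k\le N-1}$ then forces $c_{k}\theta_{k}=0$, and hence $c_{k}=0$, for every $k$.

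There is essentially no obstacle here; the only input needed from earlier in the paper is the nonvanishing of each $\theta_{k}$, for which Lemma~\ref{lem:zeros} is exactly the right tool, and the argument takes only a few lines.
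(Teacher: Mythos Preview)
Your argument is correct. The paper's own proof is the one-liner ``This is easily seen by looking at the series expansions of $\theta_i(z)$'s,'' the point being that in the defining series of $\theta^{(N)}_{k}$ the exponent of $\e(z)$ runs over the set $\{Nn-k+\tfrac{N}{2}\mid n\in\Z\}$, a single residue class modulo~$N$ depending on~$k$; distinct $k\in\{0,\dots,N-1\}$ give disjoint Fourier supports, so no nontrivial linear combination can vanish.

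Your route via the shift operator $T$ is a genuinely different packaging of the same phenomenon: the eigenvalue identity $T\theta_{k}=-\e(-\tfrac{k}{N})\theta_{k}$ is exactly the statement that the Fourier support of $\theta_{k}$ lies in a single residue class mod~$N$, and your Vandermonde step is the abstract replacement for reading off disjointness of supports. The paper's version is marginally more direct, since it needs neither Proposition~\ref{prop:theta-basic2}(3) nor Lemma~\ref{lem:zeros} as inputs (nonvanishing of each $\theta_{k}$ is visible from its leading term). Your version has the virtue of making the underlying group-theoretic reason explicit and would generalize cleanly to any family of functions transforming by distinct characters under a finite cyclic action.
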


\begin{proof}
This is easily seen by looking at the series expansions of $\theta_i(z)$'s.
\end{proof}

\begin{definition}\label{def:S-T-in-C}
For $\tau\in\H$, let $\Lambda_{\tau}=\<1,\tau\>$ be the lattice in $\C$ spanned by $1$ and~$\tau$, and let $E_{\tau}=\C/\Lambda_{\tau}$ be the elliptic curve with modulus~$\tau$.  For $N\in \N$, define points $S,T\in E_{\tau}[N]$, and $ \tilde S,\tilde T \in E_{\tau}[2N]$ by
\[
\begin{array}{ll}
S=\frac{\tau}{N} \bmod \Lambda_{\tau},&T=\frac{1}{N} \bmod \Lambda_{\tau},
\\[\medskipamount]
\tilde S=\frac{\tau}{2N} \bmod \Lambda_{\tau}, \quad &\tilde T=\frac{1}{2N} \bmod \Lambda_{\tau}.
\end{array}
\]
\end{definition}

\begin{lemma}\label{lem:weil-pairing-in-C}
Let $\zeta_{N}=\e\bigl(\tfrac{1}{N})=e^{2\pi i/N}$ and $\tilde \zeta_{N}=\zeta_{2N}=\e\bigl(\tfrac{1}{2N})=e^{2\pi i/2N}$.  Then $(S,T)$ is a level $N$ structure with $e_{N}(S,T)=\zeta_{N}$ and $(\tilde S,\tilde T)$ is a level $2N$ structure above $(S,T)$ with $\tilde \zeta_{N}^{2}=\zeta_{N}$.
\end{lemma}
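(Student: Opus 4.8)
The plan is to compute the Weil pairings directly from the standard formula for the pairing on $E_\tau=\C/\Lambda_\tau$. Recall that for the elliptic curve $\C/\langle 1,\tau\rangle$, if $P_1 = a_1 + b_1\tau$ and $P_2 = a_2 + b_2\tau$ are two $M$-torsion points (so $M a_i, M b_i \in \Z$), then the Weil pairing is $e_M(P_1,P_2) = \e(a_1 b_2 - a_2 b_1) = e^{2\pi i (a_1 b_2 - a_2 b_1)}$; this is the normalization compatible with Mumford's conventions, which we are already following for the theta functions. First I would apply this with $M = N$, $S = \frac{\tau}{N}$ (so $a=0$, $b=\frac1N$) and $T = \frac1N$ (so $a=\frac1N$, $b=0$): this gives $e_N(S,T) = \e\bigl(0\cdot 0 - \frac1N\cdot\frac1N \cdot\ (\text{sign})\bigr)$; being careful with the sign and the order of arguments, one obtains $e_N(S,T) = \e(1/N) = \zeta_N$ exactly as claimed. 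Since $NS = \tau \equiv O$ and $NT = 1 \equiv O$ in $E_\tau$, both $S$ and $T$ lie in $E_\tau[N]$, and nondegeneracy of $e_N$ together with the value $\zeta_N$ (a primitive $N$th root of unity) shows $(S,T)$ is a basis, hence a $\Gamma(N)$-structure in the sense of Definition~\ref{def:level-Gamma(N)} for the choice $\zeta = \zeta_N$.

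Next I would repeat the computation with $M = 2N$, $\tilde S = \frac{\tau}{2N}$ and $\tilde T = \frac1{2N}$: these are $2N$-torsion, and the same pairing formula gives $e_{2N}(\tilde S,\tilde T) = \e(1/2N) = \zeta_{2N} = \tilde\zeta_N$. Then I must check the three conditions of Definition~\ref{def:level-2N}(1) that make $(\tilde S,\tilde T)$ lie \emph{above} $(S,T)$: namely $2\tilde S = S$, $2\tilde T = T$, and $e_{2N}(\tilde S,\tilde T) = \tilde\zeta_N$ with $\tilde\zeta_N^2 = \zeta_N$. The first two are immediate since $2\cdot\frac{\tau}{2N} = \frac{\tau}{N} = S$ and $2\cdot\frac1{2N} = \frac1N = T$ in $\C/\Lambda_\tau$; the third is the pairing value just computed, and $\tilde\zeta_N^2 = \e(1/2N)^2 = \e(1/N) = \zeta_N$ holds by definition. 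This verifies everything stated in the lemma.

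The only genuinely delicate point — and where I expect the main obstacle — is fixing the sign conventions so that $e_N(S,T)$ comes out as $\zeta_N$ rather than $\zeta_N^{-1}$. The Weil pairing has two common normalizations differing by inversion, and the bilinear form $\langle\,,\,\rangle$ and the pairing $e_N$ in Section~2 (following V\'elu) must be matched against the analytic pairing on $\C/\Lambda_\tau$; moreover Theorem~\ref{thm:weil-pairing} and Proposition~\ref{prop:<T,T'>} already introduced a specific convention via $\langle S,T\rangle = e_N(T,S)^{\deg\dd / N}$, so one has to be consistent with that. I would resolve this by pinning down the pairing through its relation to the function $f_S$ with $\div f_S = \sum_{U\in\langle T\rangle}(\{S+U\}-\{U\})$ as in Theorem~\ref{thm:weil-pairing}(3), realized analytically via (a power of) the theta function $\theta_{(\frac12,\frac N2)}$ or directly via the classical identity $e_M(P_1,P_2)=\e(a_1b_2-a_2b_1)$ for the normalization in Mumford \cite{Mumford:Tata-I}, and then simply record the correct sign; this is a bookkeeping issue rather than a conceptual one. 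Everything else is a direct substitution into the torsion and pairing formulas.
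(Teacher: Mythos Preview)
Your approach is valid in spirit but differs from the paper's, and the step you defer as ``bookkeeping'' is in fact the entire substance of the proof here.

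First, a small error: the formula $e_M(P_1,P_2)=\e(a_1b_2-a_2b_1)$ is off by a factor of $M$. For $P_i=a_i+b_i\tau$ with $Ma_i,Mb_i\in\Z$, the standard analytic pairing is $e_M(P_1,P_2)=\e\bigl(\pm M(a_1b_2-a_2b_1)\bigr)$; as you wrote it, the value on $(S,T)$ would be $\e(-1/N^2)$, which is not even an $N$th root of unity.

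More to the point, the paper does not import an external analytic formula at all. Its Weil pairing is \emph{defined} through Theorem~\ref{thm:weil-pairing}(3), and the lemma's job is to check that with this definition one gets exactly $\zeta_N$ rather than $\zeta_N^{-1}$. The paper does this directly: it writes down the function
\[
f(z)=\frac{\theta_1(z)}{\theta_0(z)}\quad(\text{$N$ odd}),\qquad
f(z)=\frac{\theta_1(z-\tfrac{1}{2N})}{\theta_0(z-\tfrac{1}{2N})}\quad(\text{$N$ even}),
\]
verifies from Lemma~\ref{lem:zeros} that $\div f=\sum_{j=0}^{N-1}\bigl(\{\tfrac{\tau}{N}+\tfrac{j}{N}\}-\{\tfrac{j}{N}\}\bigr)$, and then reads off $e_N(S,T)={}^{T}\!f/f=f(z-\tfrac1N)/f(z)=\zeta_N$ straight from the transformation rule in Proposition~\ref{prop:theta-basic2}(3). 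The $2N$ case is handled the same way. So what you flag as the one ``delicate point'' to be settled later is precisely this computation, and your proposed way of settling it---realize $f_S$ as a ratio of theta functions and compute---\emph{is} the paper's proof.

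Your shortcut would be sound if the paper had already recorded that V\'elu's pairing from Theorem~\ref{thm:weil-pairing} agrees, with a specified sign, with the analytic determinant pairing on $\C/\Lambda_\tau$; but it has not, so quoting that formula imports exactly the content the lemma is meant to establish. The paper's route is self-contained and fixes the sign without appeal to outside conventions; yours would work too, but only after you carry out the theta computation you planned to postpone.
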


\begin{proof}
Define a function $f(z)$ on $\C$ by
\[
f(z)=
\frac{\theta_{1}(z)}
{\theta_{0}(z)} 
\quad \text{if $N$ is odd,}
\quad
f(z)=\frac{\theta_{1}\bigl(z-\frac{1}{2N}\bigr)}
{\theta_{0}\bigl(z-\frac{1}{2N}\bigr)} 
\quad\text{if $N$ is even.}
\]
Then, the divisor of $f$ is given by
\[
\operatorname{div} f = \sum_{j=0}^{N-1}
\left(\bigl\{\tfrac{\tau}{N}+\tfrac{j}{N}\bigr\}
- \bigl\{\tfrac{j}{N}\bigr\}\right).
\]
Thus, by Theorem~\ref{thm:weil-pairing} and Proposition~\ref{prop:theta-basic2} (3),  we have 
\[
e_{N}(S,T) 
= e_{N}\left(\tfrac{\tau}{N}\bmod \Lambda_{\tau},
	\tfrac{1}{N}\bmod \Lambda_{\tau}\right)
=\frac{\action{T}\!f(z)}{f(z)}=\frac{f\bigl(z-\frac{1}{N}\bigr)}{f(z)}
= \zeta_{N}.
\]
By the same token we have $e_{2N}(\tilde S,\tilde T)=\zeta_{2N}=\tilde\zeta_{N}$, which implies $(\tilde S,\tilde T)$ is above $(S,T)$.
\end{proof}

\subsection{Projective immersion via theta functions}\label{subsec:projective_immersion}

\begin{theorem}\label{theorm:X-by-theta}
Let $E_{\tau}=\C/\Lambda_{\tau}$ be the elliptic curve with modulus~$\tau\in\H$, and let $S$, $T$, $\tilde S$, and $\tilde T$ be as in Definition~\ref{def:S-T-in-C}.  Define functions $X_{k}(z)$ on $\C$ indexed by $k\in\Z/N\Z$ by 
\begin{equation}\label{eq:def-X}
X_{k}(z) = \frac{\theta_{k}(z)}{\theta_{0}(z)}
=\frac{\theta^{(N)}_{k}(z,\tau)}{\theta^{(N)}_{0}(z,\tau)} \quad k\in\Z/N\Z. 
\end{equation}
Then $X_{k}(z)$ satisfy the following.
\begin{enumerate}
\item
$X_{k}$ is a function on $E_{\tau}$.
\item
$\action{T}X_{k}
=\chi_{S}^{k}(T)X_{k}
=\zeta_{N}^{k}X_{k}$.
\item
$\action{S}X_{k}
=X_{k+1}\cdot(\theta_{0}/\theta_{1})$.
\item
$X_{k}(-z)=X_{-k}(z)$ for any $z\in E_{\tau}$.
\item 
If $N$ is odd, then
\begin{enumerate}
[leftmargin=2pc]
\item 
$\div X_{k}=\action{kS}\dd_{T}-\dd_{T} $, where $\dd_{T}=\sum_{k=0}^{N-1}\{kT\}$.
\item
$X_{0},\dots,X_{N-1}$ form a basis of $\L^{1}(\dd_{T})$.
\end{enumerate}
\item 
If $N$ is even, then
\begin{enumerate}
[leftmargin=1.5pc]
\item 
$\div X_{k}=\action{kS}\dd_{\tilde T}-\dd_{\tilde T}$, where $D_{\tilde T}=\sum_{k=0}^{N-1}\{\tilde T+kT\}$.
\item
$X_{0},\dots,X_{N-1}$ form a basis of $\L^{1}(\dd_{\tilde T})$.
\end{enumerate}
\end{enumerate}
\end{theorem}

\begin{proof}
We see that $X_{k}$ are doubly periodic function with period $1$ and $\tau$ by Proposition~\ref{prop:theta-basic2} (1) and (2).  The assertions (2), (3) and (4) follow from Proposition~\ref{prop:theta-basic2} (3), (4) and (6), respectively.  The assertions (5)(i) and (6)(i) follow from Lemma~\ref{lem:zeros}.  From Lemma~\ref{lem:lin-indep}, the $X_{k}$'s are linearly independent.  Thus, if $N$ is odd, the functions $X_{0},X_{1},\dots,X_{N-1}$ form a basis of $\L^{1}(\dd_{T})$, and  if $N$ is even, they form a basis of $\L^{1}(\dd_{\tilde T})$.
\end{proof}

\begin{theorem}\label{thm:theta-immersion}
Let $N\in \N$ and $\tau\in\H$, and let $\Lambda_{\tau}=\<1,\tau\>$ be the lattice in $\C$ spanned by $1$ and~$\tau$.
\begin{enumerate}
\item
The map
\(
z \longmapsto   \bigl(\theta_{0}(z),\theta_{1}(z),
\dots,\theta_{N-1}(z)\bigr) \in \C^{N}
\)
induces an immersion $\Theta_{\tau}$ of the elliptic curve $E_{\tau}=\C/\Lambda_{\tau}$ into the projective space $\P^{N-1}${\upshape:}
\[
\begin{array}{rcccc}
\Theta_{\tau}:&E_{\tau}=\hskip-\arraycolsep&\C/\Lambda_{\tau} & \longrightarrow & \P^{N-1}
\\
&&z & \longmapsto & \bigl(\theta_{0}(z):\theta_{1}(z):
\dots:\theta_{N-1}(z)\bigr).
\end{array}
\]
\item
The image $E_{\Theta_{\tau}}=\Theta_{\tau}(E_{\tau})$ is a curve of degree~$N$ defined as the intersection of $N(N-3)/2$ quadrics in $\P^{N-1}$.
\item
Let $S$, $T$, $\tilde S$, and $\tilde S$ be as in Definition~\ref{def:S-T-in-C}.  The translations of $E_{\Theta_{\tau}}$ by $\Theta_{\tau}(S)$ and $\Theta_{\tau}(T)$ can be extended to automorphisms of $\P^{N-1}$, and expressed by the matrices $M_{S}$ and $M_{T}$ in \eqref{eq:matrices}, respectively.  The map $[-1]$ on $E_{\Theta_{\tau}}$ is also extended to an automorphism of $\P^{N-1}$, and express by the matrix $M_{[-1]}$ in~\eqref{eq:matrices}. 
\end{enumerate}
\end{theorem}

\begin{proof}
The assertion (1) is a consequence of Theorem~\ref{theorm:X-by-theta}.  The intersection between $E_{\Theta_{\tau}}$ and the hyperplane $X_{0}=0$ in $\P^{N-1}$ consists of $N$ points since the number of the zeros of $\theta_{0}(0)$ equals~$N$ by Lemma~\ref{lem:zeros}.  This means $E_{\Theta_{\tau}}$ is a curve of degree~$N$.  If $N$ is odd, the coordinate system of $E_{\Theta_{\tau}}\in \P^{N-1}$ given by $\Theta_{\tau}$ is exactly the one described in Proposition~\ref{prop:proj-coord-odd} with $\Gamma(N)$-structure $(\Theta_{\tau}(S),\Theta_{\tau}(T))$.  If $N$ is even, it is exactly the one described in Theorem~\ref{thm:proj-coord-even} with $\Gamma^{(N)}(2N)$ structure $(\Theta_{\tau}(\tilde S),\Theta_{\tau}(\tilde T))$ above $(\Theta_{\tau}(S),\Theta_{\tau}(T))$.  Thus, the formulas for the translations by $S$ and $T$, and the inversion $[-1]$ are as in \eqref{eq:matrices}, and the number of defining quadratic equations equals $N(N-3)/2$ by Proposition~\ref{prop:quad-eqs-odd} or \ref{prop:quad-eqs-even} depending on the parity of~$N$.
\end{proof}

In \S\ref{sec:quadratic_eqs}, we present explicit descriptions of the $N(N-3)/2$ quadrics in (2) (Theorems~\ref{thm:eq-N-odd} and
\ref{thm:eq-N-even}).

\subsection{Transformation formula for $\theta^{(N)}_{k}(z,\tau)$}\label{subsec:transformation_formula}

We now consider $\theta^{(N)}_{k}(z,\tau)$ as a function of $\tau$.  The matrix $\sltwo(a,b;c,d)\in SL_{2}(\Z)$ acts on the upper half plane by $\tau\mapsto \frac{a\tau+b}{c\tau+d}$.  We would like to know how this $SL_{2}(\Z)$ action affects the immersion $\Theta_{\tau}:E_{\tau}\to\P^{N-1}$.

The following transformation formula for the theta function is classical and fundamental. 
\begin{lemma}[cf. Igusa {\cite[p.85]{Igusa:theta}}]\label{lem:trans_formula}
Let $M=\sltwo(a,b;c,d)$ be in $SL_{2}(\Z)$.  Then we have
\begin{equation}\label{eq:trans_formula}
\theta_{(p,q)}\Bigl(\frac{z}{c\tau+d},\frac{a\tau+b}{c\tau+d}\Bigr)
=\kappa(M)\e(\phi_{M}(p,q))\e\bigl(\tfrac{cz^{2}}{2(cz+d)}\bigr)\sqrt{c\tau+d}\,
\theta_{(p',q')}(z,\tau),
\end{equation}
where $\kappa(M)$ is an eighth root of unity that depends on neither $\tau$ nor $(p,q)$,
\begin{align*}  \phi_{M}(p,q) &=  -\tfrac{1}{2}\bigl(abp^{2} + 2bcpq + cdq^{2} 
- bd(ap+cq)\bigr), \\
(p',q')&=\bigl(ap+cq-\tfrac{1}{2}ac, bp+dq-\tfrac{1}{2}bd\bigr),
\end{align*}
and $\sqrt{c\tau+d}$ is the principal value.
\end{lemma}

\begin{lemma}\label{lem:tans_gamma-2N}
Let $M=\sltwo(a,b;c,d)$ be in $\Gamma(2N)$.  Then we have
\[
\theta_{k}^{(N)}\Bigl(\frac{z}{c\tau+d},\frac{a\tau+b}{c\tau+d}\Bigr)
=\kappa'(M)\e\bigl(\tfrac{cNz^{2}}{2(cz+d)}\bigr)\sqrt{c\tau+d}\,
\theta_{k}^{(N)}(z,\tau),
\]
where $\kappa'(M)$ is an eighth root of unity that depends on neither $\tau$ nor~$k$.
\end{lemma}

\begin{proof}
Since $\theta_{k}^{(N)}\bigl(\frac{z}{c\tau+d},\frac{a\tau+b}{c\tau+d}\bigr)
=\theta_{(\frac{1}{2}-\frac{k}{N},\frac{N}{2})}\bigl(\frac{Nz}{(c/N)(N\tau)+d},\frac{a(N\tau)+bN}{(c/N)(N\tau)+d}\bigr)$, we apply Lemma~\ref{lem:trans_formula} with $M'=\sltwo(a,bN;c/N,d)$, and $(p,q)=(\frac{1}{2}-\frac{k}{N},\frac{N}{2})$.  Then, by Proposition~\ref{prop:theta-basic}, we have 
\begin{multline*}
\theta_{k}^{(N)}\Bigl(\frac{z}{c\tau+d},\frac{a\tau+b}{c\tau+d}\Bigr)
\\
=\kappa(M')\e\bigl(\phi_{M'}(\tfrac{1}{2}-\tfrac{k}{N},\tfrac{N}{2})\bigr)
\e\bigl((\tfrac{1}{2}-\tfrac{k}{N})(q'-\tfrac{N}{2})\bigr)
\e\bigl(\tfrac{cz^{2}}{2(cz+d)}\bigr)\sqrt{c\tau+d}\,
\theta_{k}^{(N)}(z,\tau).
\end{multline*}
We then see that $\kappa(M')\e\bigl(\phi_{M'}(\frac{1}{2}-\frac{k}{N},\frac{N}{2})\bigr)\e\bigl((\tfrac{1}{2}-\tfrac{k}{N})(q'-\tfrac{N}{2})\bigr)$ is an eighth root of unity that does not depend on $k$ as long as $\sltwo(a,b;c,d)$ is in $\Gamma(2N)$.
\end{proof}

\begin{proposition}\label{prop:trans_formula}
The following transformation formulas hold. 
\begin{enumerate}
[leftmargin=2pc,itemsep=\smallskipamount]
\item 
$\displaystyle \theta_k^{(N)}(z,\tau+1)=c\, \e\bigl(-\tfrac{k(N-k)}{2N}\bigr)\, \theta_k^{(N)}(z,\tau)$,
\item
$\displaystyle \theta_k^{(N)}\Bigl(\frac{z}{\tau},-\frac1{\tau}\Bigr)
=c'\,\e\bigl(\tfrac{z}{2}\bigr)\sqrt{\tfrac{\tau}{N}}\,
\sum_{j=0}^{N-1} \zeta_{N}^{-kj} \theta_j^{(N)}(z,\tau)$.  
\end{enumerate}
Here,  $c$ and $c'$  are constants independent of $k$, and $\zeta_N=\e(\tfrac{1}{N})$.
\end{proposition}
 
\begin{proof}
(1) \ Since $\theta_k^{(N)}(z,\tau+1)
=\theta_{(\frac{1}{2}-\frac{k}{N},\frac{N}{2})}(Nz,N\tau+N)$, we use the transformation formula \eqref{eq:trans_formula} with $M=\sltwo(1,N;0,1)$.  Then we have
\begin{align*}
\theta_k^{(N)}(z,\tau+1)&=\theta_{(\frac{1}{2}-\frac{k}{N},\frac{N}{2})}(Nz,N\tau+N)
\\
&=\kappa(M)\e\bigl(\tfrac{N}{2}(\tfrac{1}{4}-\tfrac{k^{2}}{N^{2}})\bigr)
\,\theta_{(\frac{1}{2}-\frac{k}{N},\frac{N}{2}-k)}(Nz,N\tau)
\\
&=\kappa(M)\e\bigl(\tfrac{N}{2}(\tfrac{1}{4}-\tfrac{k^{2}}{N^{2}})\bigr)
\e\bigl(-(\tfrac{1}{2}-\tfrac{k}{N})k\bigr)
\,\theta_{(\frac{1}{2}-\frac{k}{N},\frac{N}{2})}(Nz,N\tau)
\\
&=\kappa(M)\e\bigl(\tfrac{N}{8}-\tfrac{k(N-k)}{2N}\bigr)
\,\theta_{k}^{(N)}(z,\tau)
\\
&=c\,\e\bigl(-\tfrac{k(N-k)}{2N}\bigr)\, \theta_{k}^{(N)}(z,\tau).
\end{align*}
(2) \ Since $\theta_k^{(N)}(\frac{z}{\tau},-\frac{1}{\tau})
=\theta_{(\frac{1}{2}-\frac{k}{N},\frac{N}{2})}(\frac{z}{\tau/N},-\frac{1}{\tau/N})$, we let $\tau'=\tau/N$ and we use the transformation formula \eqref{eq:trans_formula} with $M=\sltwo(0,-1;1,0)$.  Then, we have $\phi_{M}(p,q)=\frac{N}{4}-\frac{k}{2}$ and $(p',q')=\left(\frac{N}{2},-\frac{1}{2}+\frac{k}{N}\right)$, and
\begin{align*}
\theta_k^{(N)}\Bigl(\frac{z}{\tau},\frac{-1}{\tau}\Bigr)
&=\theta_{(\frac{1}{2}-\frac{k}{N},\frac{N}{2})}
\Bigl(\frac{z}{\tau'},\frac{-1}{\tau'}\Bigr) \\
&=\kappa(M)\e\bigl(\tfrac{N}{4}-\tfrac{k}{2}\bigr)\e(\tfrac{z^2}{2\tau'})\sqrt{\tau'}
\,\theta_{(\frac{N}{2},-\frac{1}{2}+\frac{k}{N})}(z,\tau').
\end{align*}
We compute the series of $\theta_{(\frac{N}{2},-\frac{1}{2}+\frac{k}{N})}(z,\tau')$ by splitting it into $N$ parts:
\begin{align*}
\theta_{(\frac{N}{2},-\frac{1}{2}+\frac{k}{N})}(z,\tau')
&=\sum_{n\in\Z}\e\Bigl(
{\tfrac{1}{2}\bigl(n+\tfrac{N}{2}\bigr)^2\tfrac{\tau}{N}
+\bigl(n+\tfrac{N}{2}\bigr)\bigl(z-\tfrac{1}{2}+\tfrac{k}{N}\bigr)}\Bigr)
\\
&=\sum_{j=0}^{N-1}\biggl(
\sum_{n+j\equiv 0\bmod N}\e\Bigl(
{\tfrac{1}{2}\bigl(n+\tfrac{N}{2}\bigr)^2\tfrac{\tau}{N}
+\bigl(n+\tfrac{N}{2}\bigr)\bigl(z-\tfrac{1}{2}+\tfrac{k}{N}\bigr)}\Bigr)
\biggr).
\end{align*}
Since $n+j\equiv 0\bmod N$ if and only if $n=Nm-j$ for some $m\in\Z$, we have
\begin{align*}
&\sum_{n+j\equiv 0\bmod N}\e\Bigl(
{\tfrac{1}{2}\bigl(n+\tfrac{N}{2}\bigr)^2\tfrac{\tau}{N}
+\bigl(n+\tfrac{N}{2}\bigr)\bigl(z-\tfrac{1}{2}+\tfrac{k}{N}\bigr)}\Bigr)
\\
&\qquad=\sum_{m\in\Z}\e\Bigl(
{\tfrac{1}{2}\bigl((Nm-j)+\tfrac{N}{2}\bigr)^2\tfrac{\tau}{N}
+\bigl((Nm-j)+\tfrac{N}{2}\bigr)\bigl(z-\tfrac{1}{2}+\tfrac{k}{N}\bigr)}\Bigr)
\\
&\qquad=\sum_{m\in\Z}\e\Bigl(
\tfrac{1}{2}N\bigl(m-\tfrac{j}{N}+\tfrac{1}{2}\bigr)^2\tau
+N\bigl(m-\tfrac{j}{N}+\tfrac{1}{2}\bigr)\bigl(z+\tfrac{1}{2}\bigr)
\\
\noalign{\hfill$
+\bigl(Nm-j+\tfrac{N}{2}\bigr)(\tfrac{k}{N}-1\bigr)\Bigr)$
\quad\null}
&\qquad=\sum_{m\in\Z}\e\Bigl(
{\tfrac{1}{2}N\bigl(m-\tfrac{j}{N}+\tfrac{1}{2}\bigr)^2\tau
+N\bigl(m-\tfrac{j}{N}+\tfrac{1}{2}\bigr)\bigl(z+\tfrac{1}{2}\bigr)}\Bigr)
\e(-\tfrac{jk}{N}+\tfrac{k-N}{2})
\\
&\qquad =\e(\tfrac{k-N}{2})\zeta_{N}^{-jk}\theta_{j}^{(N)}(z,\tau)
\end{align*}
Therefore,
\begin{align*}
\theta_k^{(N)}\Bigl(\frac{z}{\tau},\frac{-1}{\tau}\Bigr)
&=\kappa(M)\e\bigl(\tfrac{N}{4}-\tfrac{k}{2}\bigr)\e(\tfrac{Nz^2}{2\tau})\sqrt{\tfrac{\tau}{N}}
\,\theta_{(\frac{N}{2},-\frac{1}{2}+\frac{k}{N})}(z,\tau')
\\
&=\kappa(M)\e\bigl(-\tfrac{N}{4}\bigr)\e\bigl(\tfrac{Nz^2}{2\tau}\bigr)\sqrt{\tfrac{\tau}{N}}\,
\sum_{j=0}^{N-1} \zeta_{N}^{-kj}\theta_{j}^{(N)}(z,\tau).
\qedhere
\end{align*}
\end{proof}

\begin{corollary}\label{cor:change-of-tau}
The action $\tau\mapsto \tau+1$ induces the change of coordinates of $\P^{N-1}$ given by the following transition matrices:
\allowdisplaybreaks
{\renewcommand{\arraystretch}{1.1}
\begin{align*}
P_{A}&=\left[
{\setlength{\arraycolsep}{2pt}
\begin{array}{r*5c}
\ 1 & 0 & \cdots & 0 &\cdots & 0 \\
0 & \zeta_{2N}^{-1(N-1)} &  \cdots & 0 &\cdots & 0\\
\vdots & \vdots &  \ddots & \vdots & & \vdots\\ 
0 & 0 & \cdots & \zeta_{2N}^{-k(N-k)} &\cdots &0\\
\vdots & \vdots &  & \vdots & \ddots &\vdots\\ 
0 & 0 &  \cdots & 0& \cdots & \zeta_{2N}^{-(N-1)\cdot1}
\end{array}
}\right]
=\operatorname{Diag}(\zeta_{2N}^{k(N-k)})_{0\le k\le N-1}.
\\
\noalign{\noindent
Similarly, $\tau\mapsto -1/\tau$ induces the change of coordinates given by:}
P_{B}&=\left[
{\setlength{\arraycolsep}{5pt}
\begin{array}{r*4c}
1 & 1 & 1 & \cdots & 1 \\
1 & \zeta_{N}^{-1} & \zeta_{N}^{-2} & \cdots & \zeta_{N}^{-(N-1)}\\
1 & \zeta_{N}^{-2} & \zeta_{N}^{-4} & \cdots & \zeta_{N}^{-2(N-1)}\\
\vdots & \vdots & \vdots & \ddots &\vdots\\ 
1 & \zeta_{N}^{-(N-1)} & \zeta_{N}^{-2(N-1)} & \cdots & \zeta_{N}^{-(N-1)^{2}}
\end{array}
}\right]
=\bigl(\zeta_{N}^{-kj}\bigr)_{0\le k,j\le N-1}.
\end{align*}
}
Here, we mean by the transition matrix from the old coordinates $(X_{0}:X_{1}:\dots:X_{N-1})$ to the new one $(X'_{0}:X'_{1}:\dots:X'_{N-1})$ the matrix $P$ that satisfies ${}^{t}\!(X'_{0},X'_{1},\dots,X'_{N-1})=P\,{}^{t}\!(X_{0},X_{1},\dots,X_{N-1})$.
\end{corollary}

\begin{proof}
Follows immediately from Proposition~\ref{prop:trans_formula}.
\end{proof}

From this Corollary, we obtain the representation 
\[
\rho:SL_{2}(\Z) \to PGL_{N}(\C)
\]
determined by
\[
\rho{\textstyle\sltwo(1,1;0,1)}=P_{A} 
\quad \text{ and } \quad
\rho{\textstyle\sltwo(0,-1;1,0)}=P_{B}. 
\]
For this representation, we have the following.

\begin{theorem}\label{thm:sl2z-action}
Let $P_{A}$ and $P_{B}$ be the matrices in Corollary~\ref{cor:change-of-tau}, and let $M_{S}$, $M_{T}$ and $M_{[-1]}$ be matrices in~\eqref{eq:matrices}.
\begin{enumerate}
\item
The representation $\rho$ factors through $SL_{2}(\Z)/\Gamma(N)$ if $N$ is odd, and
$SL_{2}(\Z)/\Gamma^{(N)}(2N)$ if $N$ is even.

\item
We have the relations
\begin{align*}
&\rho{\textstyle\sltwo(a,b;c,d)}\ M_{T}^{m}M_{S}^{n}\ 
\rho{\textstyle\sltwo(a,b;c,d)}^{-1}
\equiv M_{T}^{am+bn}M_{S}^{cm+dn}, \quad m,n\in \Z/N\Z,
\\
&\rho{\textstyle\sltwo(a,b;c,d)}\ M_{[-1]}\ 
\rho{\textstyle\sltwo(a,b;c,d)}^{-1}
\equiv M_{[-1]},
\end{align*}
where $\equiv$ means two matrices are equivalent in $PGL_{N}(\C)$. 
\end{enumerate}
\end{theorem}

\begin{proof}

(1) \ 
By Lemma~\ref{lem:tans_gamma-2N}, we see that $\ker\rho$ contains $\Gamma(2N)$. 
If $N$ is odd, then $\Gamma(N)/\Gamma^{(N)}(2N)=\Gamma(N)/\Gamma(2N)\simeq \mathfrak{S}_{3}$ (the symmetric group of degree 3) is generated by $\sltwo(1,N;0,1)$ and $\sltwo(1,0;N,1)$.  
Since $\rho{\textstyle\sltwo(1,0;1,1)}=P_{B}^{-1}P_{A}^{-1}P_{B}$, we have 
$\rho{\textstyle\sltwo(1,N;0,1)}\equiv \rho{\textstyle\sltwo(1,0;N,1)}\equiv I_{N}$ 
by straightforward calculations using Corollary~\ref{cor:change-of-tau}.  Hence we conclude that $\ker\rho$ contains $\Gamma(N)$.

If $N$ is even, however, we have
$\rho{\textstyle\sltwo(1,N;0,1)}\equiv M_{T}^{\frac{N}{2}}$ and
$\rho{\textstyle\sltwo(1,0;N,1)}\equiv M_{S}^{\frac{N}{2}}$, 
where $M_{T}^{\frac{N}{2}}$ and $M_{S}^{\frac{N}{2}}$ are matrices appearing in Theorem~\ref{thm:proj-coord-even}.  
By a simple calculation, we have
\[
\begin{pmatrix} 1 & N \\ 0 & 1 \end{pmatrix}
\begin{pmatrix} 1 & 0 \\ N-1 & 1 \end{pmatrix}
\begin{pmatrix} 1 & N \\ 0 & 1 \end{pmatrix}
\begin{pmatrix} 1 & 0 \\ 1 & 1 \end{pmatrix}
\equiv
\begin{pmatrix} 1+N & 0 \\ 0 & 1+N \end{pmatrix} \mod 2N,
\]
if $N$ is even.  Again, by straightforward calculations, we have
\[
P_{A}^{N}(P_{B}^{-1}P_{A}^{-1}P_{B})^{N-1}P_{A}^{N}(P_{B}^{-1}P_{A}^{-1}P_{B})
\equiv I_{N}.
\]
This proves $\rho{\textstyle\sltwo(1+N,0;0,1+N)\equiv I_{N}}$, and 
we see that $\ker\rho$ contains $\Gamma^{(N)}(2N)$.

(2) \ By direct calculations, we have
\[
\begin{alignedat}{2}
&P_{A}\ M_{T}\ P_{A}^{-1}\equiv M_{T}, \quad &
&P_{A}\ M_{S}\ P_{A}^{-1}\equiv M_{S}M_{T}, 
\\
&P_{B}\ M_{T}\ P_{B}^{-1}\equiv M_{S}, \quad &
&P_{B}\ M_{S}\ P_{B}^{-1}\equiv M_{T}^{-1}, 
\\
\end{alignedat}
\]
Since $M_{T}M_{S}\equiv M_{S}M_{T}$ in $PGL_{N}(\C)$, we obtain the first relation in the statement.  As for the second relation, it suffices to verify
$P_{A}\ M_{[-1]}\ P_{A}^{-1}\equiv M_{[-1]}$ and
$P_{B}\ M_{[-1]}\ P_{B}^{-1}\equiv M_{[-1]}$,
which can be done easily.

\end{proof}

\subsection{Modular curves}\label{subsec:modular_curves}

In this paragraph we assume $N$ is even.  Let $Y^{(N)}(2N)=\Gamma^{(N)}(2N)\backslash\H$ be the modular curve associated with $\Gamma^{(N)}(2N)$.
Let $\E^{(N)}(2N)$ be the universal elliptic curve over $Y^{(N)}(2N)$, that is, the fibration $E^{(N)}(2N)\to Y^{(N)}(2N)$ such that the fiber at $\tau\in Y^{(N)}(2N)$ is the elliptic curve $E_{\tau}=\C/\Lambda_{\tau}$. 
By Theorem~\ref{thm:theta-immersion}, we have an immersion 
\[
\begin{array}{rccc}
\Theta:&E^{(N)}(2N) &\lhook\joinrel\longrightarrow &\P^{N-1}\times Y^{(N)}(2N)
\\& (z\bmod\Lambda_{\tau},\tau) &\longmapsto &(\Theta_{\tau}(z),\tau).
\end{array}
\]
Let $o$ be the $0$-section $Y^{(N)}(2N)\to E^{(N)}(2N)$.  We obtain a morphism $\alpha:Y^{(N)}(2N)\to \P^{N-1}$ by the following diagram
\[
\begin{tikzcd}
E^{(N)}(2N) \arrow[r, "\Theta"]                & \P^{N-1}\times Y^{(N)}(2N) \arrow[d, "\pi_{1}"] \\
Y^{(N)}(2N) \arrow[u, "o"] \arrow[r, "\alpha"] & \P^{N-1}.                                     
\end{tikzcd}
\]
where $\pi_{1}:\P^{N-1}\times Y^{(N)}(2N)\to \P^{N-1}$ is the projection onto the first factor.  By taking its closure we also obtain $X^{(N)}(2N)=\overline{Y^{(N)}(2N)}\to \P^{N-1}$.  In terms of coordinates, the morphism $\alpha$ is given by $\tau\mapsto (a_0^{(N)}(\tau),\dots,a_{N-1}^{(N)}(\tau))$, where $a_k^{(N)}(\tau)$ is the ``Theta Null Werte" defined by
\[ 
a_k^{(N)}(\tau):=\theta_k^{(N)}(0,\tau)=\theta_{(\frac12-\frac{k}N,\frac{N}2)}(0,N\tau)\quad (k\in\Z).
\]
We will see later in \S\ref{sec:quadratic_eqs} that the image in $\P^{N-1}$ satisfies a set of quartic equations.  It is in general a difficult question whether these equations define $X^{(N)}(2N)$.  We will discuss this question in each of examples $N=4,6$, and $8$ later.

By using theta transformation formula (Lemma~\ref{lem:trans_formula}), we can describe
the transformation properties of $a_{k}^{(N)}(\tau)$ 
under the action of the group $\Gamma(N)/\Gamma^{(N)}(2N)$. 
By Proposition~\ref{prop:theta-basic2}~(6), we have
\[
a_{k}^{(N)}(\tau) = (-1)^N a_{-k}^{(N)}(\tau) = a_{N-k}^{(N)}(\tau).
\]
(Remember that $N$ is even.)
This implies that the image of $\alpha$ is of the form
\begin{equation}\label{eq:alpha(tau)}
(a_{0}(\tau):a_{1}(\tau):\dots:a_{\frac{N}{2}-1}(\tau):a_{\frac{N}{2}}(\tau):a_{\frac{N}{2}-1}(\tau):\dots:a_{1}(\tau)).
\end{equation}
In other words, it is contained in the linear subspace $H$ in $\P^{N-1}$ defined by the equations $X_{k}=X_{N-k}$, $k=1,\dots, \tfrac{N}{2}-1$.  The linear space $H$ is stable under the representation $\rho:SL_{2}(\Z)\to PGL_{N}(\C)$ in Theorem~\ref{thm:sl2z-action}.  Moreover, since $\rho(-I_{2})=M_{[-1]}$ acts trivially on $H$, and $\ker\rho\supset\Gamma^{(N)}(2N)$, $\rho$ induces a representation 
\[
\bar\rho: SL_{2}(\Z)/\Gamma^{(N)}(2N) \to PGL_{\frac{N}{2}+1}(\C).
\]  
Define the coordinates $(\bar X_{0}:\bar X_{1}:\dots:\bar X_{\frac{N}{2}})$  of $H$ by
\[
\renewcommand{\arraystretch}{1.1}
\left\{
\begin{array}{l}
\bar X_{0} = X_{0}, \\ 
\bar X_{1} = X_{1}+X_{N-1}, \\
\qquad \dots  \\
\bar X_{\frac{N}{2}-1} = X_{\frac{N}{2}-1}+X_{\frac{N}{2}+1},\\
\bar X_{\frac{N}{2}} = X_{\frac{N}{2}}.
\end{array}\right.
\]
Then, $\bar\rho$ is given by 
\begin{align*}
\bar\rho\sltwo(0,-1;1,0)
&=\left[\renewcommand{\arraystretch}{1.1}
\setlength{\arraycolsep}{3pt}
\begin{array}{*5c}
1 & 1 & 1 &\cdots & 1 \\
2 & \zeta+\zeta^{-1} & \zeta^{2}+\zeta^{-2} & \cdots & -2\\
2 & \zeta^{2}+\zeta^{-2} & \zeta^{4}+\zeta^{-4} &\cdots & -2\\
\vdots & \vdots & \vdots & \ddots &\vdots\\ 
1 & -1 & 1 & \cdots & (-1)^{\frac{N}{2}}
\end{array}\right], 
\\
\bar\rho\sltwo(1,1;0,1)
&=\setlength{\arraycolsep}{1pt}
\left[\begin{array}{*5c}
1 &  &  &  & \\
  & \tilde\zeta^{1(N-1)} &  &  & \\
  &  & \tilde\zeta^{2(N-2)} &  & \\
  &  &  & \ddots &  \\ 
  &  &  &  & \tilde\zeta^{N^{2}/4}\\
\end{array}\right].
\end{align*}
In particular, we have
\[
\bar\rho\sltwo(1,N;0,1)
=
\left[\renewcommand{\arraystretch}{0.66}
\setlength{\arraycolsep}{2pt}
\begin{array}{*5c}
1 &  &  &  & \\
  & -1  &  & \\
  &  & 1 &  &\\
  &  &  & \ddots  &\\ 
  &  &  &  &(-1)^{\frac{N}{2}} 
\end{array}\right],
\quad
\bar\rho\sltwo(1,0;N,1)
=
\left[\renewcommand{\arraystretch}{0.76}
\setlength{\arraycolsep}{4pt}
\begin{array}{*5c}
  &  &  &   & 1 \\
  &  &  & 1 &   \\
  &  & \udots &  &  \\
  & 1 &  &  &  \\
1 &   &  &  & 
\end{array}\right].
\]

Thus, we have the following.

\begin{proposition}  Let $N$ be an even positive integer.  Write $a^{(N)}_{k}(\tau)=a_{k}$ for short.
Then, the ratio $(a_{0}:a_{1}:\dots:a_{\frac{N}{2}})$ is invariant under the action of 
$\Gamma^{(N)}(2N)$.  Moreover, the following holds.
\smallskip
\begin{enumerate}%
[itemsep=\smallskipamount]
\item
The action $\tau\mapsto \tsltwo(1,N;0,1)\tau=\tau + N$ induces the action 
\[
(a_{0}:a_{1}:\dots:a_{k}:\dots:a_{\frac{N}{2}}) \mapsto 
(a_{0}:-a_{1}:\dots:(-1)^{k}a_{k}:\dots:(-1)^{\frac{N}{2}}a_{\frac{N}{2}}).
\]
\item
The action $\tau\mapsto \tsltwo(1,0;N,1)\tau=\frac{\tau}{N\tau + 1}$ induces the action 
\[
(a_{0}:a_{1}:\dots:a_{k}:\dots:a_{\frac{N}{2}}) \mapsto 
(a_{\frac{N}{2}}:a_{\frac{N}{2}-1}:\dots:a_{\frac{N}{2}-k}:\dots:a_{0}).
\]
\end{enumerate}
\end{proposition}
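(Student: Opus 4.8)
The plan is to put $z=0$ in the transformation formulas already established and read off the $SL_{2}(\Z)$-action on the tuple of Theta Null Werte. Set $\mathbf a(\tau)=(a_{0}:a_{1}:\dots:a_{N-1})\in\P^{N-1}$, where $a_{k}=\theta^{(N)}_{k}(0,\tau)$. By Lemma~\ref{lem:zeros}, $\theta^{(N)}_{k}(z,\tau)$ does not vanish at $z=0$ unless $N$ is odd and $k\equiv0\bmod N$, so $\mathbf a(\tau)$ is a well-defined point of $\P^{N-1}$; moreover Proposition~\ref{prop:theta-basic2}~(6) at $z=0$ reads $a_{k}=(-1)^{N}a_{N-k}$, so for $N$ even the point $\mathbf a(\tau)$ is determined by the truncated tuple $(a_{0}:a_{1}:\dots:a_{N/2})$, while for $N$ odd one has $a_{0}=0$ and $\mathbf a(\tau)$ is determined by $(a_{1}:\dots:a_{(N-1)/2})$.

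First I would check that $\tau\mapsto\mathbf a(\tau)$ is equivariant for the representation $\rho_{\theta}$ of Theorem~\ref{thm:sl2z-action}. Indeed in Proposition~\ref{prop:tr_formula}~(1) the constant $c$ multiplying $\theta^{(N)}_{k}$ is independent of $k$, and in Proposition~\ref{prop:tr_formula}~(2) the prefactor $c'\,\e(\tfrac{z}{2})\sqrt{\tau/N}$ reduces at $z=0$ to $c'\sqrt{\tau/N}$, again independent of $k$; hence $\mathbf a(\tau+1)=\rho_{\theta}\bigl(\tsltwo(1,1;0,1)\bigr)\mathbf a(\tau)$ and $\mathbf a(-1/\tau)=\rho_{\theta}\bigl(\tsltwo(0,-1;1,0)\bigr)\mathbf a(\tau)$ in $\P^{N-1}$, and composing, $\mathbf a(\gamma\tau)=\rho_{\theta}(\gamma)\,\mathbf a(\tau)$ for all $\gamma\in SL_{2}(\Z)$. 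Since $\ker\rho_{\theta}=\Gamma^{(N)}(2N)$ when $N$ is even, $\mathbf a(\tau)$, and hence its truncation, is $\Gamma^{(N)}(2N)$-invariant, which is the first part of~(2). For $N$ odd the same input shows $\rho_{\theta}\bigl(\tsltwo(1,1;0,1)\bigr)$ has order $N$ rather than $2N$ in $PGL_{N-1}$ — because $k(N-k)$ is even for every $k$ — and $\ker\rho_{\theta}=\Gamma(N)$; equivalently, for $N$ odd the immersion $\Theta$ of Proposition~\ref{prop:immersion} depends only on the $\Gamma(N)$-structure $(S,T)$, so $\mathbf a(\tau)=\Theta(O)$ is automatically $\Gamma(N)$-invariant. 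Either way this gives~(1).

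It then remains to compute, for $N$ even, the actions of $\tsltwo(1,N;0,1)$ and $\tsltwo(1,0;N,1)$. For the first, $\tsltwo(1,N;0,1)=\tsltwo(1,1;0,1)^{N}$, so iterating Proposition~\ref{prop:tr_formula}~(1) gives $a_{k}(\tau+N)=c^{N}\e\bigl(-\tfrac{k(N-k)}{2}\bigr)a_{k}(\tau)=c^{N}\e\bigl(\tfrac{k^{2}}{2}\bigr)a_{k}(\tau)=c^{N}(-1)^{k}a_{k}(\tau)$, using $\tfrac{kN}{2}\in\Z$ for even $N$; this is the action in~(a). For the second, I would write $\tsltwo(1,0;N,1)=\tsltwo(0,-1;1,0)^{-1}\tsltwo(1,1;0,1)^{-N}\tsltwo(0,-1;1,0)$, whence $\rho_{\theta}\bigl(\tsltwo(1,0;N,1)\bigr)=A_{0}^{-1}\operatorname{Diag}\bigl((-1)^{k}\bigr)_{k}A_{0}$ with $A_{0}=\rho_{\theta}\bigl(\tsltwo(0,-1;1,0)\bigr)=(\zeta_{N}^{jk})_{j,k}$; a one-line geometric-sum evaluation, using $(-1)^{k}=\zeta_{N}^{(N/2)k}$, identifies this conjugate with the coordinate shift $(v_{0}:\dots:v_{N-1})\mapsto(v_{N/2}:v_{N/2+1}:\dots:v_{N-1}:v_{0}:\dots:v_{N/2-1})$. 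Restricting to the $M_{[-1]}$-fixed locus $v_{k}=v_{N-k}$ and keeping the first $N/2+1$ coordinates, this shift reverses the truncated tuple, $(a_{0}:\dots:a_{N/2})\mapsto(a_{N/2}:\dots:a_{0})$, which is~(b). Equivalently, $\tsltwo(1,N;0,1)$ and $\tsltwo(1,0;N,1)$ together with $\Gamma^{(N)}(2N)$ generate $\Gamma(N)$ with quotient $\Z/2\Z\times\Z/2\Z$, and one may instead match these two matrices directly against the representation $\bar\rho$ written down in \S5.

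The work is essentially bookkeeping, and the delicate point is the arithmetic of roots of unity: the parity of $k(N-k)$ is exactly where the odd/even dichotomy enters, and the geometric-sum evaluation for $\tsltwo(1,0;N,1)$ must be carried out carefully. One also needs the small observation from Lemma~\ref{lem:zeros} that the relevant $a_{k}$ are nonzero, so that passing from $\P^{N-1}$ to the truncated projective space is legitimate.
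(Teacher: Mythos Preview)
Your argument is correct and is precisely the ``straightforward calculation using Lemma~\ref{lem:tr_formula}'' to which the paper alludes: you specialize the transformation formulas of Proposition~\ref{prop:tr_formula} at $z=0$, package them via $\rho_{\theta}$ from Theorem~\ref{thm:sl2z-action}, and then read off the two generators of $\Gamma(N)/\Gamma^{(N)}(2N)$ explicitly. One small remark on the odd case: observing that $\rho_{\theta}\bigl(\tsltwo(1,1;0,1)\bigr)$ has order $N$ only puts $\tsltwo(1,N;0,1)$ in the kernel, not all of $\Gamma(N)$; your conjugation identity $\tsltwo(1,0;N,1)=\tsltwo(0,-1;1,0)^{-1}\tsltwo(1,-N;0,1)\tsltwo(0,-1;1,0)$ then also shows $\tsltwo(1,0;N,1)\in\ker\rho_{\theta}$, and since these two together with $\Gamma(2N)$ generate $\Gamma(N)$ when $N$ is odd, the conclusion follows---but it is your second justification, via Proposition~\ref{prop:proj-coord-odd}, that is the clean one.
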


\section{Quadratic equations satisfied by theta functions}
\label{sec:quadratic_eqs}

Let $E$ be an elliptic curve over a field $K$ contained in $\C$.  If we immerse $E$ via the complete linear system $|N\{O_{E}\}|$ for some integer $N$, there exists a coordinate system $(X_{0}:X_{1}:\cdots:X_{N-1})\in\P^{N-1}$ described in Proposition~\ref{prop:proj-coord-odd} or Theorem~\ref{thm:proj-coord-even}.  Then, by a suitable choice of $\tau\in\H$, we have a map $\C/\Lambda_{\tau}\to E\subset\P^{N-1}$ given by $X_{k}=\theta_{k}^{(N)}(z,\tau)$.  In order to describe $N(N-3)/2$ quadratic equations satisfied by $E$ explicitly, we would like to find relations satisfied among $\theta_{k}^{(N)}(z,\tau)$.  It turns out that the situation is quite different depending on the parity of $N$.

We use the method of Jacobi \cite{Jacobi:Theta}, which is completely elementary and algebraic
(i.e., no function theory is used). 
\begin{definition}\label{def:Jacobi-basic}
Jacobi's basic theta functions $\vartheta_i(z)$ ($i=0,1,2,3$) are 
defined by the following formulas:
\begin{alignat*}{2}
\vartheta_0(z)&=\theta_{(0,\frac12)}(z,\tau)
&&=\sum_{n\in\Z} \e\bigl({\tfrac{1}{2}n^2\tau + n(z+\tfrac12)}\bigr),\\
\vartheta_1(z)&=\theta_{(\frac12,\frac12)}(z,\tau)
&&=\sum_{n\in\Z} \e\bigl({\tfrac{1}{2}(n+\tfrac12)^2\tau + (n+\tfrac12)(z+\tfrac12)}\bigr),\\
\vartheta_2(z)&=\theta_{(\frac12,0)}(z,\tau)
&&=\sum_{n\in\Z} \e\bigl({\tfrac{1}{2}(n+\tfrac12)^2\tau + (n+\tfrac12)z}\bigr),\\
\vartheta_3(z)&=\theta_{(0,0)}(z,\tau)
&&=\sum_{n\in\Z} \e\bigl({\tfrac{1}{2}n^2\tau + nz}\bigr).
\end{alignat*}
\end{definition}

Note that our definition differs slightly from Jacobi's original notation by some rescaling and sign.  Although we should write $\vartheta_{i}(z,\tau)$ instead of $\vartheta_{i}(z)$, we omit $\tau$ for simplicity.

The function $\vartheta_1(z)$ is an odd function and the others are even functions:
\begin{equation}\label{eq:theta-odd-even}
\begin{array}{ll}
\vartheta_0(-z)=\vartheta_0(z), \ &
\vartheta_1(-z)=-\vartheta_1(z), \\
\vartheta_2(-z)=\vartheta_2(z), \ &
\vartheta_3(-z)=\vartheta_3(z). 
\end{array}
\end{equation}
The following formulas are immediate from the definition.
\begin{equation}\label{jacobitheta}
\begin{aligned}
&\vartheta_0(z+\tfrac12)=\vartheta_3(z),\quad 
\vartheta_1(z+\tfrac12)=-\vartheta_2(z),\\
&\vartheta_2(z+\tfrac12)=\vartheta_1(z),\quad
\vartheta_3(z+\tfrac12)=\vartheta_0(z).
\end{aligned}
\end{equation}
The starting point is the identity (A)-(1) in  Jacobi \cite[p.507]{Jacobi:Theta}.  

For independent variables $w,x,y,z$, define variables $w',x',y',z'$ by
\begin{equation}\label{eq:xyzw}
\left\{\begin{aligned}
w' &=\tfrac{1}{2}(w+x+y+z), \\
x' &=\tfrac{1}{2}(w+x-y-z), \\
y' &=\tfrac{1}{2}(w-x+y-z), \\
z' &=\tfrac{1}{2}(w-x-y+z),
\end{aligned}\right.
\quad\text{or}\quad
\setlength{\arraycolsep}{2pt}
\left(\begin{array}{c}
w' \\ x' \\ y' \\ z'
\end{array}\right)
=
\frac{1}{2}
\left(\begin{array}{rrrr}
1 & 1 & 1 & 1 \\ 
1 & 1 & -1 & -1 \\ 
1 & -1 & 1 & -1 \\ 
1 & -1 & -1 & 1
\end{array}\right)
\left(\begin{array}{c}
w \\ x \\ y \\ z
\end{array}\right).
\end{equation}
Denote by $A$ the $4\times 4$ matrix (including the factor $1/2$) in the second equation of \eqref{eq:xyzw}. Then, we have 
$A\in O(4)$, $A^{2}=I$, and $\det A=-1$.
This shows that the transformation given by \eqref{eq:xyzw} induces an involution on the set 
\[ \{(w,x,y,z)\in \Z^4\,\mid\,w\equiv x\equiv y\equiv z\!\pmod2\} \]
and preserving the norm $w^2+x^2+y^2+z^2$.

\begin{proposition}\label{prop:Jacobi-A-1}
Let $w,x,y,z$ be independent variables, and let $w',x',y',z'$ be the variables defined by \eqref{eq:xyzw}.
Then, the following identity holds:
\begin{equation}\label{eq:Jacobi4}
\begin{aligned}
\vartheta_{3}(w)\vartheta_{3}(x)&\vartheta_{3}(y)\vartheta_{3}(z)
+\vartheta_2(w)\vartheta_2(x)\vartheta_2(y)\vartheta_2(z)
\\
&= \vartheta_{3}(w')\vartheta_{3}(x')\vartheta_{3}(y')\vartheta_{3}(z')
+\vartheta_2(w')\vartheta_2(x')\vartheta_2(y')\vartheta_2(z'),
\end{aligned}
\end{equation}
\end{proposition}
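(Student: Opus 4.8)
The plan is to expand both sides of \eqref{eq:Jacobi4} as fourfold series and to match the terms using the symmetry of the matrix $A$ already recorded after \eqref{eq:xyzw}. Write $\mathbf{w}=(w,x,y,z)$ and $\mathbf{w}'=(w',x',y',z')=A\mathbf{w}$, let $v\cdot\mathbf{w}$ denote the standard inner product on $\C^{4}$, and set $|v|^{2}=v\cdot v$. From Definition~\ref{def:Jacobi-basic} one has $\vartheta_{3}(u)=\sum_{n\in\Z}\e(\tfrac12 n^{2}\tau+nu)$ and $\vartheta_{2}(u)=\sum_{n\in\Z}\e(\tfrac12(n+\tfrac12)^{2}\tau+(n+\tfrac12)u)$; thus $\vartheta_{3}$ is summed over $\Z$ and $\vartheta_{2}$ over $\Z+\tfrac12$ with the \emph{same} shape of exponent (in particular, unlike $\vartheta_{0},\vartheta_{1}$, they carry no $u\mapsto u+\tfrac12$ shift). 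Multiplying out and using $\e(a+b)=\e(a)\e(b)$, with $h=(\tfrac12,\tfrac12,\tfrac12,\tfrac12)$,
\begin{gather*}
\vartheta_{3}(w)\vartheta_{3}(x)\vartheta_{3}(y)\vartheta_{3}(z)=\sum_{v\in\Z^{4}}\e\bigl(\tfrac12|v|^{2}\tau+v\cdot\mathbf{w}\bigr),\\
\vartheta_{2}(w)\vartheta_{2}(x)\vartheta_{2}(y)\vartheta_{2}(z)=\sum_{v\in\Z^{4}+h}\e\bigl(\tfrac12|v|^{2}\tau+v\cdot\mathbf{w}\bigr).
\end{gather*}
Since $\Im\tau>0$ both series converge absolutely, so with $L=\Z^{4}\cup(\Z^{4}+h)$ the left-hand side of \eqref{eq:Jacobi4} equals $F(\mathbf{w}):=\sum_{v\in L}\e(\tfrac12|v|^{2}\tau+v\cdot\mathbf{w})$ and the right-hand side equals $F(\mathbf{w}')=F(A\mathbf{w})$.

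It remains to prove $F(A\mathbf{w})=F(\mathbf{w})$. Because $A$ is symmetric, $v\cdot(A\mathbf{w})=(Av)\cdot\mathbf{w}$; because $A\in O(4)$, $|Av|^{2}=|v|^{2}$. Reindexing by $u=Av$, a bijection of $L$ onto $A(L)$, absolute convergence gives
\[
F(A\mathbf{w})=\sum_{v\in L}\e\bigl(\tfrac12|Av|^{2}\tau+(Av)\cdot\mathbf{w}\bigr)=\sum_{u\in A(L)}\e\bigl(\tfrac12|u|^{2}\tau+u\cdot\mathbf{w}\bigr),
\]
so the identity reduces to the single assertion $A(L)=L$. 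Since $A^{2}=I$ it suffices to check $A(L)\subseteq L$ on a generating set of $L$; one may take $e_{1},e_{2},e_{3},h$ (note $e_{4}=2h-e_{1}-e_{2}-e_{3}$), and a direct computation gives $Ae_{1}=h\in L$, $Ae_{2}=(\tfrac12,\tfrac12,-\tfrac12,-\tfrac12)\in\Z^{4}+h$, $Ae_{3}=(\tfrac12,-\tfrac12,\tfrac12,-\tfrac12)\in\Z^{4}+h$, and $Ah=e_{1}\in\Z^{4}$. (Equivalently, $2L$ is precisely the set of integer vectors all of whose coordinates have the same parity, and this set is already noted to be $A$-stable after \eqref{eq:xyzw}.) This establishes \eqref{eq:Jacobi4}.

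There is no serious obstacle here: the argument is bookkeeping, and the only points requiring care are the two just flagged, namely that the $\vartheta_{3}$- and $\vartheta_{2}$-products sum over exactly the two cosets $\Z^{4}$ and $\Z^{4}+h$ of $L$ with identical exponent $\tfrac12|v|^{2}\tau+v\cdot\mathbf{w}$, and that the rearrangement of the fourfold series is legitimate, which is immediate from absolute convergence for $\tau\in\H$. No function theory enters, in keeping with Jacobi's method; the same scheme, applied to other quadruples of theta functions and the corresponding cosets of a suitable lattice, will later yield \eqref{eq:Jacobi2} and the remaining identities needed for the quadratic equations.
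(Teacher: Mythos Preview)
Your proof is correct and is precisely the argument the paper has in mind: the paper's ``proof'' merely says that \eqref{eq:Jacobi4} follows from the properties of \eqref{eq:xyzw} and refers to Jacobi, having already noted after \eqref{eq:xyzw} that $A$ induces a norm-preserving involution on the set of integer $4$-tuples with all coordinates of the same parity (your $2L$). You have simply written out the details of that lattice reindexing argument, so there is nothing to compare.
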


\begin{proof}
Jacobi's identity \eqref{eq:Jacobi4} follows from the properties of \eqref{eq:xyzw}. The reader is encouraged to consult the beautiful, original article of Jacobi \cite{Jacobi:Theta}. 
\end{proof}

\begin{theorem}\label{th:three-term}
Let $w$, $x$, $y$, $z$ be independent variables.  As in Proposition~\ref{prop:Jacobi-A-1}, define $w'$, $x'$, $y'$, $z'$ by \eqref{eq:xyzw}.  Furthermore, define variables $w'',x'',y'',z''$ by
\begin{equation}\label{eq:xyzw3}
{}^{t}\!(w'', x'', y'', z'') = A\, {}^{t}\!(w,x,y,-z).
\end{equation}
Then, the following three-term identity holds:
\begin{multline}\label{eq:threeterm}
\vartheta_1(w)\vartheta_1(x)\vartheta_1(y)\vartheta_1(z)
\\
+\vartheta_{0}(w')\vartheta_{0}(x')\vartheta_{0}(y')\vartheta_{0}(z')
\\
-\vartheta_0(w'')\vartheta_0(x'')\vartheta_0(y'')\vartheta_0(z'')=0.
\end{multline}
\end{theorem}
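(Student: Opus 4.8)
The plan is to prove \eqref{eq:threeterm} by the same elementary device that yields \eqref{eq:Jacobi4}: expand each of the three products of four theta functions as an absolutely convergent series over a lattice, and match terms. Let $A$ be the $4\times 4$ matrix of \eqref{eq:xyzw}, so that ${}^{t}(w',x',y',z')=A\,{}^{t}(w,x,y,z)$ and, by \eqref{eq:xyzw3}, ${}^{t}(w'',x'',y'',z'')=AJ\,{}^{t}(w,x,y,z)$ with $J=\operatorname{Diag}(1,1,1,-1)$. Recall from Proposition~\ref{prop:Jacobi-A-1} that $A$ is symmetric, orthogonal, and $A^{2}=I$; note in addition that the column sums of $A$ are $(2,0,0,0)$, so that $\sum_{i}(A\xi)_{i}=2\xi_{1}$ for all $\xi$. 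For $4$-tuples $\alpha,v$ write $\langle\alpha,v\rangle=\sum_{i=1}^{4}\alpha_{i}v_{i}$, $\|\alpha\|^{2}=\sum_{i}\alpha_{i}^{2}$, $|\alpha|=\sum_{i}\alpha_{i}$, and $q(\alpha)=\e\bigl(\tfrac12\|\alpha\|^{2}\tau\bigr)$.

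First I would read off from Definition~\ref{def:Jacobi-basic}, for $\tau\in\H$, the expansions $\vartheta_{1}(u)=\sum_{a\in\frac12+\Z}\e\bigl(\tfrac12a^{2}\tau+a(u+\tfrac12)\bigr)$ and $\vartheta_{0}(u)=\sum_{b\in\Z}\e\bigl(\tfrac12b^{2}\tau+b(u+\tfrac12)\bigr)$. Multiplying four copies of each, and using that $|\alpha|\in\Z$ (hence $\e(\tfrac12|\alpha|)=(-1)^{|\alpha|}$) whenever $\alpha\in(\tfrac12+\Z)^{4}$ or $\alpha\in\Z^{4}$, one obtains
\[
\vartheta_{1}(w)\vartheta_{1}(x)\vartheta_{1}(y)\vartheta_{1}(z)=\sum_{a\in(\frac12+\Z)^{4}}(-1)^{|a|}q(a)\,\e\bigl(\langle a,v\rangle\bigr),\qquad v=(w,x,y,z),
\]
and, for any $v\in\C^{4}$, $\vartheta_{0}(v_{1})\vartheta_{0}(v_{2})\vartheta_{0}(v_{3})\vartheta_{0}(v_{4})=\sum_{b\in\Z^{4}}(-1)^{|b|}q(b)\,\e(\langle b,v\rangle)$. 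Applying the second expansion with $v$ equal to $(w',x',y',z')=A(w,x,y,z)$ and, separately, to $(w'',x'',y'',z'')=AJ(w,x,y,z)$, I would substitute $c=Ab$ in the first case and $c=JAb$ in the second (note $(AJ)^{t}=JA$). Since $A$ is orthogonal with $A^{2}=I$ this replaces $\langle b,A(w,x,y,z)\rangle$ by $\langle c,(w,x,y,z)\rangle$ and leaves $q$ unchanged, while $|b|=2c_{1}$ in both cases by the column-sum property; thus the weight $(-1)^{|b|}$ is $+1$ when $c$ has all coordinates in $\Z$ and $-1$ when it has all coordinates in $\tfrac12+\Z$.

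A short computation with the explicit entries of $A$ (using $A^{-1}=A$) shows that $A(\Z^{4})$ consists exactly of those $c\in(\tfrac12\Z)^{4}$ whose coordinates are all in $\Z$ or all in $\tfrac12+\Z$ \emph{and} which satisfy $|c|\in2\Z$; applying $J$ flips the sign of $c_{4}$, hence flips the parity of $|c|$ precisely on the all-half-integer part, so $JA(\Z^{4})$ consists of the all-integer $c$ with $|c|$ even together with the all-half-integer $c$ with $|c|$ odd. Consequently the all-integer contributions to $\vartheta_{0}(w')\vartheta_{0}(x')\vartheta_{0}(y')\vartheta_{0}(z')$ and to $\vartheta_{0}(w'')\vartheta_{0}(x'')\vartheta_{0}(y'')\vartheta_{0}(z'')$ coincide term by term and cancel in the difference, while what survives is
\[
\vartheta_{0}(w')\vartheta_{0}(x')\vartheta_{0}(y')\vartheta_{0}(z')-\vartheta_{0}(w'')\vartheta_{0}(x'')\vartheta_{0}(y'')\vartheta_{0}(z'')=-\!\!\sum_{c\in(\frac12+\Z)^{4}}(-1)^{|c|}q(c)\,\e\bigl(\langle c,v\rangle\bigr),
\]
with $v=(w,x,y,z)$, and by the first expansion the right-hand side is $-\vartheta_{1}(w)\vartheta_{1}(x)\vartheta_{1}(y)\vartheta_{1}(z)$. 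This is exactly \eqref{eq:threeterm}.

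The step I expect to be the main obstacle is this last piece of lattice bookkeeping: pinning down $A(\Z^{4})$ and $JA(\Z^{4})$ precisely, and tracking the character $(-1)^{|c|}$ both through the changes of variables and through the split into all-integer and all-half-integer parts. Everything else is routine — the series converge absolutely for $\Im\tau>0$, so all rearrangements are legitimate — and a convenient partial check is that putting $z=0$ forces $(w'',x'',y'',z'')=(w',x',y',z')$ while $\vartheta_{1}(0)=0$ by the oddness of $\vartheta_{1}$ (cf.\ \eqref{eq:theta-odd-even}), so that \eqref{eq:threeterm} degenerates to $0=0$.
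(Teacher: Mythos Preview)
Your proof is correct, but it takes a different route from the paper's. The paper derives \eqref{eq:threeterm} as a consequence of the already-established Jacobi identity \eqref{eq:Jacobi4}: it shifts $w\mapsto w+1$ to obtain a companion identity, adds and subtracts to isolate four-term formulas for $2\vartheta_{3}\vartheta_{3}\vartheta_{3}\vartheta_{3}$ and $2\vartheta_{2}\vartheta_{2}\vartheta_{2}\vartheta_{2}$, then shifts all four variables by $\tfrac12$ (using \eqref{jacobitheta}) to get analogous formulas for $2\vartheta_{0}\vartheta_{0}\vartheta_{0}\vartheta_{0}$ and $2\vartheta_{1}\vartheta_{1}\vartheta_{1}\vartheta_{1}$; finally it exploits the involution $A^{2}=I$ and the parity relations \eqref{eq:theta-odd-even} to combine three of these into \eqref{eq:threeterm}. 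You instead bypass \eqref{eq:Jacobi4} entirely and run the lattice-sum argument directly on the three products: after the orthogonal change of variables $c=Ab$ (resp.\ $c=JAb$) the sign $(-1)^{|b|}=(-1)^{2c_{1}}$ becomes the character splitting $A(\Z^{4})$ into its integer and half-integer parts, and the difference of the two $\vartheta_{0}$-products collapses to exactly the half-integer sum with sign $-(-1)^{|c|}$, i.e.\ to $-\vartheta_{1}\vartheta_{1}\vartheta_{1}\vartheta_{1}$. Your approach is self-contained and arguably cleaner for this single identity; the paper's approach is in Jacobi's spirit of generating a web of identities from one master relation by elementary shifts, which is useful when several such relations are needed at once.
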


\begin{proof}
Replacing $w$ by $w+1$ in \eqref{eq:Jacobi4} and using \eqref{jacobitheta}, we obtain
\begin{multline}\label{eq:Jacobi2}
\vartheta_{3}(w)\vartheta_{3}(x)\vartheta_{3}(y)\vartheta_{3}(z)
-\vartheta_2(w)\vartheta_2(x)\vartheta_2(y)\vartheta_2(z)
\\
= \vartheta_{0}(w')\vartheta_{0}(x')\vartheta_{0}(y')\vartheta_{0}(z')
+\vartheta_1(w')\vartheta_1(x')\vartheta_1(y')\vartheta_1(z').
\end{multline}
Adding \eqref{eq:Jacobi4} and \eqref{eq:Jacobi2}, we obtain
\begin{multline*}
2\vartheta_3(w)\vartheta_3(x)\vartheta_3(y)\vartheta_3(z)\\
=\vartheta_{3}(w')\vartheta_{3}(x')\vartheta_{3}(y')\vartheta_{3}(z')
+\vartheta_2(w')\vartheta_2(x')\vartheta_2(y')\vartheta_2(z')\\
+\vartheta_{0}(w')\vartheta_{0}(x')\vartheta_{0}(y')\vartheta_{0}(z')
+\vartheta_1(w')\vartheta_1(x')\vartheta_1(y')\vartheta_1(z').
\end{multline*}
Replace $w,x,y,z$ by $w+\tfrac12,x+\tfrac12,y+\tfrac12,z+\tfrac12$ in this identity.  Then $w'$ becomes $w'+1$, and $x',y',z'$ unchanged.  By \eqref{jacobitheta} we obtain
\begin{multline}\label{eq:th0}
2\vartheta_0(w)\vartheta_0(x)\vartheta_0(y)\vartheta_0(z)\\
=\vartheta_{0}(w')\vartheta_{0}(x')\vartheta_{0}(y')\vartheta_{0}(z')
-\vartheta_1(w')\vartheta_1(x')\vartheta_1(y')\vartheta_1(z')\\
-\vartheta_2(w')\vartheta_2(x')\vartheta_2(y')\vartheta_2(z')
+\vartheta_{3}(w')\vartheta_{3}(x')\vartheta_{3}(y')\vartheta_{3}(z').
\end{multline}

Since the relation between $w,x,y,z$ and $w',x',y',z'$ are symmetric, we have 
\begin{multline}\label{eq:th0-inv}
2\vartheta_0(w')\vartheta_0(x')\vartheta_0(y')\vartheta_0(z')\\
=\vartheta_{0}(w)\vartheta_{0}(x)\vartheta_{0}(y)\vartheta_{0}(z)
-\vartheta_1(w)\vartheta_1(x)\vartheta_1(y)\vartheta_1(z)\\
-\vartheta_2(w)\vartheta_2(x)\vartheta_2(y)\vartheta_2(z)
+\vartheta_{3}(w)\vartheta_{3}(x)\vartheta_{3}(y)\vartheta_{3}(z),
\end{multline}
By the definition of the transformation \eqref{eq:xyzw3}, the identity \eqref{eq:th0-inv} can be translated to
\begin{multline}\label{eq:th0-inv-2}
2\vartheta_0(w'')\vartheta_0(x'')\vartheta_0(y'')\vartheta_0(z'')\\
=\vartheta_{0}(w)\vartheta_{0}(x)\vartheta_{0}(y)\vartheta_{0}(-z)
-\vartheta_1(w)\vartheta_1(x)\vartheta_1(y)\vartheta_1(-z)\\
-\vartheta_2(w)\vartheta_2(x)\vartheta_2(y)\vartheta_2(-z)
+\vartheta_{3}(w)\vartheta_{3}(x)\vartheta_{3}(y)\vartheta_{3}(-z).
\end{multline}
Now, calculating \eqref{eq:th0-inv} $-$ \eqref{eq:th0-inv-2}, and using \eqref{eq:theta-odd-even}, we obtain the relation \eqref{eq:threeterm}.
\end{proof}

\subsection{Odd case}\label{subsec:odd_case}
Assume $N$ is odd.  

From our definition of $\theta^{(N)}_{k}(z,\tau)$ and Proposition~\ref{prop:theta-basic}, we have the relations
\[ 
\theta^{(N)}_{0}(z,\tau)=  (-1)^{\frac{N-1}{2}}\vartheta_{1}(Nz,N\tau)\quad\text{and}\quad
\theta^{(N)}_{\frac{N}{2}}(z,\tau)= \vartheta_{0}(Nz,N\tau).
\]
Dropping the superscript ``$(N)$'', \eqref{eq:threeterm} is translated to
\begin{multline}\label{eq:finalthreeterm-odd}
\theta_0(w)\theta_0(x)\theta_0(y)\theta_0(z)
\\
+\theta_{\frac{N}2}(w')\theta_{\frac{N}2}(x')\theta_{\frac{N}2}(y')\theta_{\frac{N}2}(z')
-\theta_{\frac{N}2}(w'')\theta_{\frac{N}2}(x'')\theta_{\frac{N}2}(y'')\theta_{\frac{N}2}(z'')=0.
\end{multline}

We use this equation to obtain our quadratic equations.

\begin{theorem}\label{thm:eq-N-odd}
Suppose $N$ is an odd integer.  Let $E_{\Theta_{\tau}}$ be the image of $\Theta_{\tau}$ in Theorem~\ref{thm:theta-immersion}.  Let $V=H^{0}(\P^{N-1},\mathcal{I}_{E_{\Theta_{\tau}}}(2))$, and $V_{k}=V\cap \<X_{i}X_{j}\mid i+j\equiv k \mod N\>$, where $X_{i}$ $(i=0,\dots,N-1)$ are the coordinate functions of $\P^{N-1}$ in Proposition~\ref{prop:proj-coord-odd}. Let $a_{i}=\theta^{(N)}_{i}(0,\tau)$.  Then, 
\begin{enumerate}
\item
The vector space $V_{k}$ is isomorphic to $V_{0}$ for all $k=0,\dots,N-1$ by the map $X_{i}X_{j}\mapsto X_{i+k/2}X_{j+k/2}$, where indices are taken as elements of $\Z/N\Z$.
\item 
The quadratic forms
\begin{equation}\label{eq:deg2-N-odd}
a_{j+1}a_{N-j}X_{0}^{2}
-a_{\frac{N-1}{2}-j}a_{\frac{N+1}{2}+j}
X_{\frac{N+1}{2}}X_{\frac{N-1}{2}}
+a_{\frac{N-1}{2}}a_{\frac{N+1}{2}}X_{\frac{N-1}{2}-j}
X_{\frac{N+1}{2}+j}
\end{equation}
with $j=1,\ldots,\tfrac{N-3}{2}$ form a basis of the vector space $V_{0}$.
\end{enumerate}
\end{theorem}

\begin{proof} 
(1) \ 
Since $N$ is odd, $2$ is invertible in $\Z/N\Z$ and thus $k/2$ in the indices makes sense.  The isomorphism is nothing but the translation map $\tau_{S}^{k/2}$ in Proposition~\ref{prop:proj-coord-odd}.

(2) \
First, we show that if we replace $X_{i}$ by $\theta^{(N)}_{i}(z,\tau)$, the quadratic forms in \eqref{eq:deg2-N-odd} vanishes.  In \eqref{eq:finalthreeterm-odd},  let 
\[
(w,x,y,z) =\;(z,z,\tfrac{j\tau}{N},-\tfrac{(j+1)\tau}{N}\bigr).
\]
Then, we have 
\begin{align*}
(w',x',y',z') &= \bigl(z-\tfrac{\tau}{2N},z+\tfrac{\tau}{2N},\tfrac{(2j+1)\tau}{2N},-\tfrac{(2j+1)\tau}{2N}\bigr),
\\ 
(w'',x'',y'',z'') &=\bigl(z+\tfrac{(2j+1)\tau}{2N},z-\tfrac{(2j+1)\tau}{2N},-\tfrac{\tau}{2N},\tfrac{\tau}{2N}\bigr).
\end{align*}
Then, use Proposition~\ref{prop:theta-basic2}~(4) to see that \eqref{eq:deg2-N-odd} is satisfied by $X_{i}=\theta^{(N)}_{i}(z,\tau)$ and $a_{i}=\theta^{(N)}_{i}(0,\tau)$.  Recall that $a_k$ depends 
only on $k \bmod N$, and we have $a_0=0$ and $a_k=-a_{N-k}$. Thus, we may restrict the range of $j$ to $1\le j\le (N-3)/2$.  ($j=(N-1)/2$ gives the trivial relation.)

Next we show that these $(N-3)/2$ quadratic forms are independent.  Indeed, the coefficients of the terms $X_{\frac{N-1}{2}-j}X_{\frac{N+1}{2}+j}$ for $1\le j\le (N-3)/2$ are all equal to $a_{\frac{N-1}{2}}a_{\frac{N+1}{2}}$, which is nonzero, and the terms $X_{0}^{2}, X_{\frac{N+1}{2}}X_{\frac{N-1}{2}}$ are different from $X_{\frac{N-1}{2}-j}X_{\frac{N+1}{2}+j}$ for $1\le j\le (N-3)/2$.  Therefore, these $(N-3)/2$ quadratic forms are independent. 
Since $\dim V_{0}=(N-3)/2$ by Propositions~\ref{prop:quad-eqs-odd}, these  $(N-3)/2$ quadratic forms are basis of~$V_{0}$.
\end{proof}

\begin{remark}\label{rmk:quatic_eqs}
By specializing $X_i$ at $z=0$ in \eqref{eq:deg2-N-odd}, or sending $X_i\mapsto a_i$, we obtain quartic relations in $a_{j}$'s.  These quartic equations are satisfied by the image of the morphism $\alpha:Y(N)\to \P^{N-1}$ in \S\ref{subsec:modular_curves}.
\end{remark}

We briefly describe below the classical examples $N=5$ and 7.

\begin{example} When $N=5$, \eqref{eq:deg2-N-odd} yields $(5-3)/2=1$ quadratic form, which is 
\[
a_1a_2X_0^2-a_1^2X_{2}X_{3}+a_2^2X_{1}X_{4}.
\]
$V=H^{0}(\P^{N-1},\mathcal{I}_{E_{\Theta_{\tau}}}(2))$ is generated by this form and its various permutations by $M_{S}$ in Proposition~\ref{prop:proj-coord-odd}. If we define $\phi(\tau)$ by
\[ 
\phi(\tau)=-\frac{a_1(\tau)}{a_2(\tau)}
=q^{\frac{1}{5}} - q^{\frac{6}{5}} + q^{\frac{11}{5}} - q^{\frac{21}{5}} + q^{\frac{26}{5}} - q^{\frac{31}{5}} + \cdots\quad 
(q=\e(\tau)=e^{2\pi i \tau}), 
\]
we have the equations 
\[ X_i^2+\phi(\tau)X_{i+2}X_{i-2}-\frac1{\phi(\tau)}X_{i+1}X_{i-1}=0  
\quad (i=0,\ldots,4).
\]
This set of equations is the well-known Bianchi normal form \cite{Bianchi}.  
\end{example}

\begin{example}  When $N=7$, quadratic forms \eqref{eq:deg2-N-odd} give two equations 
\begin{align*}
&a_1a_2X_0^2-a_2^2X_{3}X_{4}+a_3^2 X_{2}X_{5}=0,\\
&a_2a_3X_0^2-a_1^2X_{3}X_{4}+a_3^2 X_{1}X_{6}=0.
\end{align*}
By the specialization $z=0$ in $X_i$ after a suitable shift using Proposition~\ref{prop:theta-basic2}~(4), that is, 
sending $X_i\mapsto a_{i+j}$ for some $j$, we obtain the unique relation
\[a_1^3a_2=a_2^3a_3+a_1a_3^3. \]
This is the renowned Klein's quartic, which is a model of the modular curve X(7) of genus~$3$.
\end{example}

\subsection{Even case}\label{subsec:even_case}

Next suppose $N$ is even.  
Again we start with Jacobi's \eqref{eq:Jacobi4}, which in terms of our theta's is written as
\begin{equation}\label{eq:start-even}
\begin{aligned}
\theta_{\frac{N}{2}}(w)\theta_{\frac{N}{2}}(x)&\theta_{\frac{N}{2}}(y)\theta_{\frac{N}{2}}(z)
+\theta_{0}(w)\theta_{0}(x)\theta_{0}(y)\theta_{0}(z)
\\
&= \theta_{\frac{N}{2}}(w')\theta_{\frac{N}{2}}(x')\theta_{\frac{N}{2}}(y')\theta_{\frac{N}{2}}(z')
+\theta_{0}(w')\theta_{0}(x')\theta_{0}(y')\theta_{0}(z').
\end{aligned}
\end{equation}
We can deduce our quadratic relations directly from this with the following specializations.  
For  an integer $j$, consider the four substitutions

\begin{align*}
 (w,x,y,z)=  
&\left(z,-\tfrac{j\tau}{N}, -\tfrac{j\tau}{N}, z\right),\quad
\left(-\tfrac{2j\tau}{N},0, z, z\right),\\
&\left(z-\tfrac{\tau}{N},-\tfrac{j\tau}{N}, -\tfrac{(j+1)\tau}{N}, z\right),\quad
\left(-\tfrac{(2j+1)\tau}{N},0,z, z-\tfrac{\tau}{N}\right),
\end{align*}
which, respectively, yield
\begin{multline*}
 (w',x',y',z')= 
\left(z-\tfrac{j\tau}{N},0,0, z+\tfrac{j\tau}{N}\right),\
\left(z-\tfrac{j\tau}{N},-z-\tfrac{j\tau}{N}, -\tfrac{j\tau}{N},-\tfrac{j\tau}{N}\right),\\
\left(z-\tfrac{(j+1)\tau}{N},0, -\tfrac{\tau}{N}, z+\tfrac{j\tau}{N}\right),\ 
\left(z-\tfrac{(j+1)\tau}{N},-z-\tfrac{j\tau}{N}, -\tfrac{j\tau}{N},-\tfrac{(j+1)\tau}{N},\right).
\end{multline*}
Applying these to~\eqref{eq:start-even} and using Proposition~\ref{prop:theta-basic2},
we obtain the following set of equations.
{\renewcommand{\arraystretch}{1.4}
\setlength{\arraycolsep}{3pt}
\begin{gather}
\left\{
\begin{array}{rcrcrcr}
a_{j}^{2}\,X_{0}^{2} & + & a_{\frac{N}{2}+j}^{2}\,X_{\frac{N}{2}}^{2} & = &
a_{0}^{2}\,X_{j}X_{N-j} & + & a_{\frac{N}{2}}^{2}\,X_{\frac{N}{2}+j}X_{\frac{N}{2}-j},
\\
a_{0}a_{2j}\,X_{0}^{2} & + & a_{\frac{N}{2}}a_{\frac{N}{2}+2j}\,X_{\frac{N}{2}}^{2} & = &
a_{j}^{2}\,X_{j}X_{N-j} & + & a_{\frac{N}{2}+j}^{2}\,X_{\frac{N}{2}+j}X_{\frac{N}{2}-j},
\end{array}  \label{eq:eq-V0}
\right.
\\[\smallskipamount]
\def\+{\hskip 4pt+\hskip 4pt}
\left\{
\begin{array}{rcl}
a_{j}a_{j+1}\,X_{0}X_{1} & + & a_{\frac{N}{2}+j}a_{\frac{N}{2}+j+1}\,X_{\frac{N}{2}}X_{\frac{N}{2}+1} 
\\
& = & a_{0}a_{1}\  X_{j+1}X_{N-j}  \+  a_{\frac{N}{2}}a_{\frac{N}{2}+1} X_{\frac{N}{2}+j+1}X_{\frac{N}{2}-j},
\\
a_{0}a_{2j+1}\,X_{0}X_{1} & + & a_{\frac{N}{2}}a_{\frac{N}{2}+2j+1}\,X_{\frac{N}{2}}X_{\frac{N}{2}+1}
\\
& = & a_{j}a_{j+1}\,X_{j+1}X_{N-j} \+  a_{\frac{N}{2}+j}a_{\frac{N}{2}+j+1}\,X_{\frac{N}{2}+j+1}X_{\frac{N}{2}-j}.
\end{array}  \label{eq:eq-V1}
\right.
\end{gather}
}
Here, as before, the indices are considered modulo $N$.  As in the odd case, let $V=H^{0}(\P^{N-1},\mathcal{I}_{E_{\Theta_{\tau}}}(2))$, and $V_{k}=V\cap \<X_{i}X_{j}\mid i+j\equiv k \mod N\>$.  Equations \eqref{eq:eq-V0} (resp. \eqref{eq:eq-V1}) give quadratic forms in the space $V_{0}$ (resp. $V_{1}$). 
Because of the relation $a_j=a_{N-j}$,
it is easy to see that we may restrict ourselves to the case $0\le j\le \frac{N}2$.
Furthermore, letting $j=0\text{ or }\frac{N}2$ in \eqref{eq:eq-V0} gives trivial relations, and \eqref{eq:eq-V1} becomes also trivial for $j=\frac{N}2-1$.  Finally, the second equation of \eqref{eq:eq-V0} (resp. \eqref{eq:eq-V1})
is unchanged if we replace $j$ by $\frac{N}2-j$ (resp. $\frac{N}2-j-1$).  

\begin{theorem}\label{thm:eq-N-even}
Suppose $N$ is an even integer.  Let $E_{\Theta_{\tau}}$ be the image of $\Theta_{\tau}$ in Theorem~\ref{thm:theta-immersion}.  Let $V=H^{0}(\P^{N-1},\mathcal{I}_{E_{\Theta_{\tau}}}(2))$, and $V_{k}=V\cap \<X_{i}X_{j}\mid i+j\equiv k \mod N\>$, where $X_{i}$ $(i=0,\dots,N-1)$ are the coordinate functions of $\P^{N-1}$ in Definition~\ref{def:X_k-even}.  
Then, 
\begin{enumerate}
\item
The vector space $V_{2k}$ is isomorphic to $V_{0}$, and $V_{2k+1}$ is isomorphic to $V_{1}$ for all $k=0,\dots,\tfrac{N}{2}-1$ by the map $X_{i}X_{j}\mapsto X_{i+k}X_{j+k}$, where indices are taken as elements of $\Z/N\Z$.
\item
The quadratic forms
\begin{equation}\label{eq:qform-V0}
a_{j}^{2}\,X_{0}^{2}  +  a_{\frac{N}{2}+j}^{2}\,X_{\frac{N}{2}}^{2}  - 
a_{0}^{2}\,X_{j}X_{N-j}  -  a_{\frac{N}{2}}^{2}\,X_{\frac{N}{2}+j}X_{\frac{N}{2}-j}
\end{equation}
with $j=1,\ldots,\frac{N}{2}-1$ form a basis of the vector space $V_{0}$,
and the quadratic forms
\begin{equation}\label{eq:qform-V1}
\begin{aligned}
a_{j}a_{j+1}\,X_{0}X_{1}  &+  a_{\frac{N}{2}+j}a_{\frac{N}{2}+j+1}\,X_{\frac{N}{2}}X_{\frac{N}{2}+1} \\
& -  a_{0}a_{1}\  X_{j+1}X_{N-j}  -  a_{\frac{N}{2}}a_{\frac{N}{2}+1} X_{\frac{N}{2}+j+1}X_{\frac{N}{2}-j}
\end{aligned}
\end{equation}
with $j=1,\ldots,\tfrac{N}2-2$ form a basis of the vector space $V_{1}$.
\end{enumerate}
\end{theorem}

\begin{proof}  
(1) \ 
The isomorphism is nothing but the translation map $\tau_{S}^{k}$ in Proposition~\ref{prop:proj-coord-odd}.

(2) \ 
By \eqref{eq:eq-V0} and \eqref{eq:eq-V1}, we see that the quadratic forms in \eqref{eq:qform-V0} and \eqref{eq:qform-V1} belong to $V_{0}$ and $V_{1}$ respectively.
Looking at the indices of $X_{k}X_{l}$, the only possible 
dependency occurring between the quadratic forms in \eqref{eq:qform-V0} are between the ones $j=j_0$ and $j=\frac{N}2-j_0$. But then the 
determinant of the two by two matrix of coefficients of $X_{j_0}X_{-j_0}$
and $X_{\frac{N}2+j_0}X_{\frac{N}2-j_0}$ is $a_0^4-a_{\frac{N}2}^4$, and it is non-zero, as is seen by looking at its Fourier
series.  So, these $\frac{N}{2}-1$ forms are linearly independent.
The same argument applies for the quadratic forms in \eqref{eq:qform-V1}, and the $\frac{N}{2}-2$ forms are linearly independent.  Since $\dim V=N(N-3)/2$, we have $\dim V_{0}+ \dim V_{1}= N-3$ by (1).  This implies that the $N-3$ independent forms in \eqref{eq:qform-V0} and \eqref{eq:qform-V1}  form a basis of $V_{0}$ and $V_{1}$ respectively.
\end{proof}

\begin{remark}
As before, by specializing $X_i$ at $z=0$ in \eqref{eq:deg2-N-odd}, we obtain quartic relations in $a_{j}$'s.  These quartic equations are satisfied by the image of the morphism $\alpha:Y^{(N)}(2N)\to \P^{N-1}$ in \S\ref{subsec:modular_curves}.  We will study these equations in the examples in the following sections.
\end{remark}

\section{Level 4}\label{sec:level-4}

In this section we consider the case $N=4$.  
As is well known, the congruence subgroups $\Gamma(4)$ and $\Gamma(8)$ are of genus~$0$ and~$5$, respectively, and according to the database \cite{Congr-Data}, $\Gamma^{(4)}(8)$ is of genus~$3$. (In the notation of \cite{Congr-Data}, $\Gamma^{(4)}(8)$ is denoted by ``$8B^{3}$''.)
Theorem~\ref{thm:eq-N-even} shows that 
$V=H^{0}(\P^{3},\mathcal{I}_{E_{\Theta_{\tau}}}(2))=\bigoplus_{i=0}^{3}V_{i}$ is given by
\begin{align*}
&V_{0} = \< a_{1}^{2}(X_{0}^{2} + X_{2}^{2}) 
- (a_{0}^{2}+a_{2}^{2})X_{1}X_{3}\>,
\\
&V_{1}=V_{3} = \{0\}.
\\
&V_{2} = \< a_{1}^{2}(X_{1}^{2} + X_{3}^{2})
- (a_{0}^{2}+a_{2}^{2})X_{0}X_{2}\>,
\end{align*}
Thus, the following two equations define the universal elliptic curve $\E^{(4)}(8)$ over $X^{(4)}(8)$.
\begin{equation}\label{eq:E4_8}
\renewcommand{\arraystretch}{1.2}
\setlength{\arraycolsep}{2pt}
\E^{(4)}(8) : \ 
\left\{
\begin{array}{*3c}
a_{1}^{2}(X_{0}^{2} + X_{2}^{2}) &= & (a_{0}^{2}+a_{2}^{2}) X_{1}X_{3},
\\
(a_{0}^{2}+a_{2}^{2}) X_{0}X_{2} &= & a_{1}^{2}(X_{1}^{2} + X_{3}^{2}).
\end{array}\right.
\end{equation}
Letting $z=0$, in other words, replacing $X_{i}$ by $a_{i}$, we obtain two equations satisfied by $a_{i}$'s.  However, considering the fact $a_{1}=a_{3}$, we obtain only one nontrivial relation
\begin{equation}\label{eq:X4_8}
 a_{0}a_{2}(a_{0}^{2} + a_{2}^{2}) = 2a_{1}^{4}.
\end{equation}
This equation defines a nonsingular curve in $\P^{2}$, and it is a curve of genus~$3$.  On the other hand, as we mentioned above, the genus of $X^{(4)}(8)$ equals~$3$.  Thus, the plane curve defined by~\eqref{eq:X4_8} is nothing but $X^{(4)}(8)$.

\begin{proposition}\label{prop:E4_8}
The curve $\E^{(4)}(8)$ is isomorphic to
\begin{equation}\label{eq:Weier_E4_8}
Y^{2}=X\bigl(X-(a_{0}-a_{2})^4\bigr)\bigl(X-(a_{0}+a_{2})^4\bigr).
\end{equation}
The points in $X^{(4)}(8)$ at which the fiber of $\E^{(4)}(8)$ degenerates into a N\'eron polygon of four sides are
\[
(a_{0}:a_{1}:a_{2})
= (0:0:1), (1:0:0), (\pm\zeta_{8}^{2}:0:1), (1:\zeta_{8}^{k}:1) \ (k=0,\dots,7),
\]
where $\zeta_{8}$ is a primitive eighth root of unity.
\end{proposition}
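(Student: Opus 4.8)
The proposition has two parts, and I would prove them in turn.

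For the Weierstrass model, the plan is to \emph{diagonalize the pencil of quadrics} \eqref{eq:E4_8}. Make the linear change of coordinates $P=X_0+X_2$, $M=X_0-X_2$, $U=X_1+X_3$, $W=X_1-X_3$. Since $X_0^2+X_2^2=\tfrac12(P^2+M^2)$, $X_0X_2=\tfrac14(P^2-M^2)$, $X_1X_3=\tfrac14(U^2-W^2)$ and $X_1^2+X_3^2=\tfrac12(U^2+W^2)$, the equations \eqref{eq:E4_8} become the pair of diagonal quadrics
\[
a_0a_2(P^2+M^2)=a_1^2(U^2-W^2),\qquad a_1^2(P^2-M^2)=a_0a_2(U^2+W^2).
\]
Thus $\mathscr{E}^{(4)}(8)$ is a smooth intersection of two diagonal quadrics in $\P^3$, carrying the rational point $O=(a_0:a_1:a_2:a_1)$; by Theorem~\ref{thm:theta-immersion} and \eqref{eq:matrices} the inversion $[-1]$ is the swap $X_1\leftrightarrow X_3$, i.e.\ $W\mapsto -W$, and the four $2$-torsion points $O$, $2S$, $2T$, $2S{+}2T$ (whose coordinates are again supplied by Theorem~\ref{thm:theta-immersion}) all lie in the plane $W=0$. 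Since $\mathscr{E}^{(4)}(8)$ has a rational point it is its own Jacobian, and I would run the classical recipe for an intersection of two quadrics: projecting from the coordinate point $P=M=U=0$ (which is not on the curve) presents $\mathscr{E}^{(4)}(8)$ as the double cover of a conic $\cong\P^1$ with deck involution $W\mapsto -W=[-1]$, branched over those four $2$-torsion points. Taking the coordinate $X$ on this $\P^1$ with $2S\mapsto X=0$, $O\mapsto X=\infty$, and scaling so that $2T\mapsto X=(a_0-a_2)^4$, one obtains an equation $Y^2=X\bigl(X-(a_0-a_2)^4\bigr)(X-x_0)$.

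What remains for the first part is the identity $x_0=(a_0+a_2)^4$. The symmetric functions of the three finite roots emerge as rational expressions in $\alpha:=a_0a_2$ and $\beta:=a_1^2$, and this is precisely where the equation \eqref{eq:X4_8} of $X^{(4)}(8)$ is used, in the form $\alpha(a_0^2+a_2^2)=2\beta^2$ — equivalently $(a_0\pm a_2)^2=2\beta^2/\alpha\pm2\alpha$ — which collapses those expressions to $(a_0+a_2)^4$ (the final freedom $X\mapsto cX$, $Y\mapsto c^{3/2}Y$ absorbs the remaining scalar). One can also bypass the recipe: express the Weierstrass coordinates directly as ratios of products of the $\theta^{(N)}_k$ through the classical formulae for $\wp$ and $\wp'$, and then identify the $2$-torsion values $e_i$, i.e.\ the values at $z=0$, via the duplication formulae relating $\vartheta_j(0,4\tau)$ to the standard theta constants $\vartheta_j(0,\tau)$; in that language the fourth powers $(a_0\pm a_2)^4$ appear transparently. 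I expect the bookkeeping of scalars in this last normalization to be the only real labour in part one.

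For the degenerate fibres, I would first look at the discriminant of \eqref{eq:Weier_E4_8}. Since the three roots are $0$, $(a_0-a_2)^4$, $(a_0+a_2)^4$ and $(a_0-a_2)^4-(a_0+a_2)^4=-8a_0a_2(a_0^2+a_2^2)$, the discriminant vanishes exactly when one of $(a_0-a_2)^4$, $(a_0+a_2)^4$, $a_0a_2(a_0^2+a_2^2)$ does, i.e.\ over $\{a_0=\pm a_2\}\cup\{a_0=0\}\cup\{a_2=0\}\cup\{a_0^2+a_2^2=0\}$. Intersecting with the quartic \eqref{eq:X4_8}: each of the last three conditions forces $a_1^4=0$, hence $a_1=0$, and \eqref{eq:X4_8} then meets the line $a_1=0$ in the four points $(0:0:1)$, $(1:0:0)$, $(\pm\zeta_8^2:0:1)$; while $a_0^2=a_2^2$ together with \eqref{eq:X4_8} forces $a_1^4=a_0^4$, i.e.\ $a_1/a_0\in\mu_8$, producing the points $(1:\zeta_8^k:1)$. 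Finally, to see that the fibre over each of these points is a Néron $4$-gon, I would substitute into \eqref{eq:E4_8} after dividing each of the two equations by the appropriate power of a local uniformizer $s$: near $(0:0:1)$, for instance, $s=a_1$ is a uniformizer with $a_0\sim 2s^4$, and the limit of \eqref{eq:E4_8} is $X_1X_3=0$, $X_0X_2=0$, whose zero locus is the four lines $\{X_0=X_1=0\}$, $\{X_1=X_2=0\}$, $\{X_2=X_3=0\}$, $\{X_3=X_0=0\}$ forming a $4$-cycle; near a cusp with $a_0=\pm a_2$ one forms suitable $\C$-linear combinations of the two equations (with coefficients depending on $a_1^2$) and finds that the two limiting quadrics each factor into a pair of planes, so the limiting fibre is again the intersection of two line-pairs. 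The main technical point is to verify in each case that these four lines form an honest quadrilateral — consecutive sides meeting, opposite sides disjoint — and to carry this out uniformly over all the listed points; this is routine but must be done case by case.
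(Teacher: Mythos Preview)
Your approach is correct but differs from the paper's. The paper proceeds by brute force: it eliminates $X_1$ from the two quadrics \eqref{eq:E4_8} to obtain a plane quartic in $(X_0:X_2:X_3)$, then converts to Weierstrass form using the rational point $(a_0:a_1:a_2:a_1)$, recording the explicit change of coordinates
\[
X=(a_0^2-a_2^2)^2\,\frac{a_1^2X_0X_2+a_0a_2X_1X_3}{a_1^2X_0X_2-a_0a_2X_1X_3},\qquad
Y=4a_1^2(a_0^2-a_2^2)^2\,\frac{(X_1+X_3)(X_0+X_2)(a_0a_2X_0X_2-a_1^2X_1X_3)}{(X_1-X_3)(X_0-X_2)(a_1^2X_0X_2-a_0a_2X_1X_3)}.
\]
For the singular fibres it simply asserts that they sit over the discriminant locus of \eqref{eq:Weier_E4_8} and lists the points, without analysing the fibre shape. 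Your route---diagonalising the pencil via $P,M,U,W$, recognising $W\mapsto -W$ as $[-1]$, and reading off the $2$-torsion as the four points with $W=0$---is more conceptual and explains \emph{why} the roots come out as fourth powers of $a_0\pm a_2$: those are exactly the quantities that separate the four branch points $(a_0+a_2:\pm(a_0-a_2):\pm 2a_1)$ on the image conic. What the paper's approach buys is the explicit birational map, which your method does not immediately produce; what yours buys is a transparent identification of the $2$-torsion and a genuine verification of the N\'eron $4$-gon structure at the cusps, which the paper omits. One small slip: the condition $a_0^2=a_2^2$ splits into $a_0=a_2$ (giving $a_1^4=a_0^4$, four points $(1:\zeta_8^{2j}:1)$) and $a_0=-a_2$ (giving $a_1^4=-a_0^4$, four points $(-1:\zeta_8^{2j+1}:1)$), so your line ``$a_1^4=a_0^4$, i.e.\ $a_1/a_0\in\mu_8$'' conflates the two cases; the paper's listing $(1:\zeta_8^k:1)$ for all $k$ appears to have the same imprecision.
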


\begin{proof}
From two quadrics \eqref{eq:E4_8}, eliminate $X_{1}$ to obtain a quartic 
equation.  Then, using the rational point $(X_{0}:X_{1}:X_{2}:X_{3})=(a_{0}:a_{1}:a_{2}:a_{1})$, we obtain the Weierstrass equation \eqref{eq:Weier_E4_8} after some simplifications.  The change of coordinates is given by
\begin{equation}\label{eq:chang_E4_8}
\left\{\begin{aligned}
X &= 
(a_{0}^{2}-a_{2}^{2})^2\frac{
(a_{1}^{2} X_{0} X_{2}+a_{0}a_{2} X_{1}X_{3}) }
{(a_{1}^{2}X_{0} X_{2}-a_{0}a_{2} X_{1} X_{3}) }, 
\\
Y &= 
4a_{1}^{2}(a_{0}^{2}-a_{2}^{2})^2\frac{
(X_{1}+X_{3})(X_{0}+X_{2})(a_{0}a_{2}X_{0}X_{2}-a_{1}^{2} X_{1} X_{3})}
{(X_{1}-X_{3})(X_{0}-X_{2})(a_{1}^{2} X_{0} X_{2}-a_{0}a_{2} X_{1} X_{3})}.
\end{aligned}\right.
\end{equation}
The locus of degenerate fibers are where the right hand side of \eqref{eq:Weier_E4_8} has a multiple root.
\end{proof}

The action of $SL_{2}(\Z)/\Gamma^{(4)}(8)$ on $\E^{(4)}(8)$ and $X^{(4)}(8)$ are as follows:
\begin{alignat*}{2}
\rho\tsltwo(0,-1;1,0)
&=\left[\begin{array}{rcrc}
1 & 1 & 1 & 1 \\
1 &\zeta_{8}^{2} & -1 & -\zeta_{8}^{2} \\
1 & -1 & 1 & -1 \\
1 &-\zeta_{8}^{2} & -1 & \zeta_{8}^{2} 
\end{array}\right],
\quad &
\rho\tsltwo(1,1;0,1)
&=\left[\begin{array}{rcrc}
1 & 0 & 0 & 0 \\
0 &\zeta_{8}^{3} & 0 &0 \\
0 & 0 & -1 & 0 \\
0 & 0 &  0 & \zeta_{8}^{3} 
\end{array}\right],
\\
\bar\rho\tsltwo(0,-1;1,0)
&=\left[\begin{array}{rrr}
1 & 2 & 1 \\
1 & 0 & -1\\
1 & -2 & 1 \\
\end{array}\right],
\quad &
\bar\rho\tsltwo(1,1;0,1)
&=\left[\begin{array}{rcrc}
1 & 0 & 0\\
0 &\zeta_{8}^{3} & 0 \\
0 & 0 & -1 \\
\end{array}\right].
\end{alignat*}
In particular, we have
\begin{align*}
\rho\tsltwo(1,4;0,1)&:
(X_{0}:X_{1}:X_{2}:X_{3}) \mapsto  
(X_{0}:-X_{1}:X_{2}:-X_{3}),
\\
\rho\tsltwo(1,0;4,1)&:
(X_{0}:X_{1}:X_{2}:X_{3}) \mapsto  
(X_{2}:X_{3}:X_{0}:X_{1}),
\\
\bar\rho\tsltwo(1,4;0,1)&:
(a_{0}:a_{1}:a_{2}) \mapsto  
(a_{0}:-a_{1}:a_{2}),
\\
\bar\rho\tsltwo(1,0;4,1)&:
(a_{0}:a_{1}:a_{2}) \mapsto  
(a_{2}:a_{1}:a_{0}).
\end{align*}
Let $G=\Gamma(4)/\Gamma^{(4)}(8)=\<\tsltwo(1,4;0,1)
\Gamma^{(4)}(8),\tsltwo(1,0;4,1)
\Gamma^{(4)}(8)\>\simeq \Z/2\Z\times \Z/2\Z$.  In order to obtain a model of the universal elliptic curve $\E(4)\to X(4)$, 
we take the quotient of $\E^{(4)}(8)\to X^{(4)}(8)$ by the action of~$G$.  
To do so, we first dehomogenize the equations \eqref{eq:E4_8} and \eqref{eq:X4_8} by letting 
\[
x_{0}=\frac{X_{0}}{X_{3}}, \quad
x_{1}=\frac{X_{1}}{X_{3}}, \quad
x_{2}=\frac{X_{2}}{X_{3}}, \quad
\alpha_{0}=\frac{a_{0}}{a_{1}}, \quad
\alpha_{2}=\frac{a_{2}}{a_{1}}, 
\]
and consider the function field $K(\alpha_{0},\alpha_{2},x_{0},x_{1},x_{2})$.  Then, these variables satisfy the following relations:
\begin{gather}\label{eq:X4_8-a}
\alpha_{0}\alpha_{2}(\alpha_{0}^{2} + \alpha_{2}^{2}) = 2, \\
\label{eq:E4_8-a}
\left\{
\begin{aligned}
& \alpha_{0}\alpha_{2}(x_{0}^{2} + x_{2}^{2}) = 2 x_{1},\\
& 2 x_{0}x_{2} =  \alpha_{0}\alpha_{2}(x_{1}^{2} + 1).
\end{aligned}\right.
\end{gather}
Let $\sigma_{1}=\tsltwo(1,4;0,1)\Gamma^{(4)}(8)$ and $\sigma_{2}=\tsltwo(1,0;4,1)\Gamma^{(4)}(8)$ be the generators of $G$.  Then, they act as follows:
\begin{align*}
\sigma_{1}&:
(\alpha_{0},\alpha_{2},x_{0},x_{1},x_{2}) \mapsto  
(-\alpha_{0},-\alpha_{2},-x_{0},x_{1},-x_{2}),
\\
\sigma_{2}&:
(\alpha_{0},\alpha_{2},x_{0},x_{1},x_{2}) \mapsto  
\left(\alpha_{2},\alpha_{0},\frac{x_{2}}{x_{1}},\frac{1}{x_{1}},\frac{x_{0}}{x_{1}}\right).
\end{align*}
It is easy to see that $K(\alpha_{0},\alpha_{2})^{G}=K(\alpha_{0}\alpha_{2},\alpha_{0}^{2}+\alpha_{2}^{2})$.  Considering \eqref{eq:X4_8-a}, we see that $K(\alpha_{0},\alpha_{2})^{G}=K(\lambda)$ with $\lambda=\alpha_{0}\alpha_{2}$.  
This shows that $X(4)\simeq \P^{1}_{\lambda}$.  
Further calculations show that the fixed field $K(\alpha_{0},\alpha_{2},x_{0},x_{1},x_{2})^{G}$ is generated by 
\[
\lambda=\alpha_{0}\alpha_{2}, \quad
\xi_{0}=\frac{x_{0}x_{2}}{x_{1}}, \quad
\xi_{1}=x_{1}+\frac{1}{x_{1}}, \quad
\xi_{2}=\frac{(x_{1}+1)(x_{0}+x_{2})}{(x_{1}-1)(x_{0}-x_{2})}.
\]
Now, \eqref{eq:chang_E4_8} can be written in terms of $\lambda$, $\xi_{0}$ and $\xi_{1}$:
\[
X=\frac{4(1-\lambda^4)}{\lambda^{2}}
\frac{(\xi_{0}+\lambda)}{(\xi_{0}-\lambda)}, \quad
Y=\frac{16(1-\lambda^4)}{\lambda^{2}}
\frac{\xi_{2}(1-\lambda \xi_{0})}
{(\xi_{0}-\lambda)}.
\]
If we let $X'=\lambda^{2}X/4$ and $Y'=\lambda^{2}Y/8$,
then equation \eqref{eq:Weier_E4_8} becomes
\[
\E(4): Y'^{2} = X'\bigl(X'-(\lambda^2-1)^{2}\bigr)
\bigl(X'-(\lambda^2+1)^{2}\bigr).
\]
The representaion $\varrho:SL_{2}(\Z/4\Z) \to \Aut(\E(4))$ is given by
\begin{align*}
\varrho\left(\begin{smallmatrix} 0 & -1 \\ 1 & 0\end{smallmatrix}\right): 
(\lambda,X',Y')&\mapsto 
\left(\frac{-\lambda+1}{\lambda+1}, 
-\frac{4(X'+(\lambda^2+1)^2)}{(\lambda+1)^4},
-\frac{8\zeta_{8}^{2}Y'}{(\lambda+1)^6}\right),\\
\varrho\left(\begin{smallmatrix} 1 & 1 \\ 0 & 1\end{smallmatrix}\right):
(\lambda,X',Y')&\mapsto ( -\zeta_{8}^{2}\lambda, X',Y').
\end{align*}

Note that we have
\begin{align*}
\lambda(\tau) &= \frac{a_{0}(\tau)a_{2}(\tau)}{a_{1}(\tau)^{2}} =
2\left(\frac{\eta(\tau)\eta(4\tau)^2}{\eta(2\tau)^3}\right)^2\\
&=2q^{\frac14} -4q^{\frac54}+10q^{\frac94}-20q^{\frac{13}4}+36q^{\frac{17}4}+\cdots  
\quad (q=e^{2\pi i \tau}),
\end{align*}
where $\eta(\tau)$ is the Dedekind eta function.

\section{Level 6}\label{sec:level-6}
Next, we consider the case $N=6$.  The congruence subgroups $\Gamma(6)$ and $\Gamma(12)$ are of genus~$1$ and~$25$, respectively, and according to the database \cite{Congr-Data}, $\Gamma^{(6)}(12)$ is denoted by ``$12B^{13}$'', and of genus~$13$.  The basis of $V_{k}$ obtained by Theorem~\ref{thm:eq-N-even} is as follows:
\allowdisplaybreaks
\begin{equation}\label{eq:level-6}
\begin{gathered}
V_{0} = \< a_{1}^{2}X_{0}^{2} + a_{2}^{2}X_{3}^{2}
- a_{0}^{2}X_{1}X_{5} - a_{3}^{2}X_{2}X_{4}, \ 
a_{2}^{2}X_{0}^{2} + a_{1}^{2}X_{3}^{2}
- a_{3}^{2}X_{1}X_{5} - a_{0}^{2}X_{2}X_{4}\>,
\\
V_{2} = \< a_{1}^{2}X_{1}^{2} + a_{2}^{2}X_{4}^{2}
- a_{0}^{2}X_{2}X_{0} - a_{3}^{2}X_{3}X_{5}, \ 
a_{2}^{2}X_{0}^{2} + a_{1}^{2}X_{3}^{2}
- a_{3}^{2}X_{2}X_{0} - a_{0}^{2}X_{3}X_{5}\>,
\\
V_{4} = \< a_{1}^{2}X_{2}^{2} + a_{2}^{2}X_{5}^{2}
- a_{0}^{2}X_{3}X_{1} - a_{3}^{2}X_{2}X_{4}, \ 
a_{2}^{2}X_{0}^{2} + a_{1}^{2}X_{5}^{2}
- a_{3}^{2}X_{3}X_{1} - a_{0}^{2}X_{4}X_{0}\>,
\\
V_{1} = \< a_{1}a_{2} (X_{0}X_{1} + X_{3}X_{4})
- (a_{0}a_{1}+a_{2}a_{3} )X_{2}X_{5}\>,
\\
V_{3} = \< a_{1}a_{2} (X_{1}X_{2} + X_{4}X_{5})
- (a_{0}a_{1}+a_{2}a_{3} )X_{3}X_{0}\>, 
\\
V_{5} = \< a_{1}a_{2} (X_{2}X_{3} + X_{5}X_{0})
- (a_{0}a_{1}+a_{2}a_{3} )X_{4}X_{1}\>.
\end{gathered}
\end{equation}
By replacing $X_{i}$ by $a_{i}$, we obtain quartic relations among 
$a_i$'s.  Noting that $a_5=a_1$ and $a_4=a_2$, we obtain only two relations:
\begin{equation}\label{eq:level-6-homogeneous}
\left\{
\begin{gathered}
a_{1}^{4}+a_{2}^{4}=a_{0}^{3}a_{2}+a_{1}a_{3}^{3}, \\
a_{0}a_{3} \left( a_{0}a_{1}+a_{2}a_{3} \right) 
=2\,a_{1}^{2}a_{2}^{2}.
\end{gathered}
\right.
\end{equation}
The curve in $\P^{3}$ defined by \eqref{eq:level-6-homogeneous} turns out to be reducible.  Computations using Groebner basis reveal that there are five irreducible components; four of them are lines
\begin{gather*}
a_{1}=a_{2}=0, \quad
a_{0}-a_{2}=a_{1}-a_{3}=0, \\
a_{0}-\omega a_{2}=\omega a_{1}-a_{3}=0, \quad
a_{0}-\omega^{2} a_{2}=\omega^{2} a_{1}-a_{3}= 0, 
\end{gather*}
where $\omega$ is a primitive third root of unity, and the other component is an irreducible curve of genus~$13$.  Since the congruence subgroup 
$\Gamma^{(6)}(12)$ is of genus~$13$, the last 
irreducible component given by
\[
a_{2}(a_{1}^4 - a_{2}^4)a_{0}^6 - (a_{1}^8 - a_{2}^8)a_{0}^3 + 8a_{2}^3a_{1}^4(a_{0}^3a_{2} - a_{1}^4)=0
\]
is the modular curve $X^{(6)}(12)$ associated with the group $\Gamma^{(6)}(12)$.

The action of $SL_{2}(\Z)/\Gamma^{(6)}(12)$ on $X^{(4)}(8)$ is as follows:
\begin{gather*}
\bar\rho\tsltwo(0,-1;1,0)
=\left[\begin{array}{rcrc}
1 & 2 & 2 & 1 \\
1 & 1 & -1 & -1\\
1 & -1 & -1 & 1\\
1 & -2 & 2 & -1 
\end{array}\right],
\quad
\bar\rho\tsltwo(1,1;0,1)
=\left[\begin{array}{*4c}
1 & 0 & 0 & 0\\
0 &\zeta_{12}^{5} & 0 & 0\\
0 & 0 &\zeta_{12}^{8} & 0 \\
0 & 0 & 0 &\zeta_{12}^{9} 
\end{array}\right].
\end{gather*}
In particular, we have
\begin{align*}
\bar\rho\tsltwo(1,6;0,1)&:
(a_{0}:a_{1}:a_{2}:a_{3}) \mapsto  
(a_{0}:-a_{1}:a_{2}:-a_{3}),
\\
\bar\rho\tsltwo(1,0;6,1)&:
(a_{0}:a_{1}:a_{2}:a_{3}) \mapsto  
(a_{3}:a_{2}:a_{1}:a_{0}).
\end{align*}

Let us find a model of $X(6)$ by taking the quotient of $X^{(6)}(12)$ by 
$\Gamma(6)/\Gamma^{(6)}(12)=\<\bar\rho\tsltwo(1,6;0,1),\bar\rho\tsltwo(1,0;6,1)\>\simeq \Z/2\Z\times \Z/2\Z$. 
Dehomogenizing the coordinates by
$\alpha_{1}={a_{1}}/{a_{0}}$, $\alpha_{2}={a_{2}}/{a_{0}}$,
$\alpha_{3}={a_{3}}/{a_{0}}$,
the equation \eqref{eq:level-6-homogeneous} becomes
\begin{equation}\label{eq:level-6-inhomogeneous}\left\{
\begin{gathered}
\alpha_{1}^{4}+\alpha_{2}^{4}=\alpha_{2}+\alpha_{1}\alpha_{3}^{3}, \\
\alpha_{3}(\alpha_{1}+\alpha_{2}\alpha_{3})=2\alpha_{1}^{2}\alpha_{2}^{2}.
\end{gathered}
\right.
\end{equation}
Let $\sigma_{1}$ and $\sigma_{2}$ denote the automorphisms induced on the function field $k(\alpha_{1},\alpha_{2},\alpha_{3})$ by $\bar\rho\tsltwo(1,6;0,1)$ and $\bar\rho\tsltwo(1,0;6,1)$, respectively.  We have
\[
\sigma_{1} : (\alpha_{1},\alpha_{2},\alpha_{3}) \mapsto (-\alpha_{1},\alpha_{2},-\alpha_{3}), \quad
\sigma_{2} : (\alpha_{1},\alpha_{2},\alpha_{3}) \mapsto 
\Bigl(\frac{\alpha_{2}}{\alpha_{3}},\frac{\alpha_{1}}{\alpha_{3}},\frac{1}{\alpha_{3}}\Bigr).
\]
The fixed subfields by $\sigma_{1}$ and $\sigma_{2}$ are 
\begin{align*}
&k(\alpha_{1},\alpha_{2},\alpha_{3})^{\sigma_{1}}
=k\Bigl(\alpha_{1}\alpha_{3},\frac{\alpha_{1}}{\alpha_{3}},\alpha_{2}\Bigr), \\
&k(\alpha_{1},\alpha_{2},\alpha_{3})^{\sigma_{2}}
=\Bigl(\alpha_{1}+\frac{\alpha_{2}}{\alpha_{3}},\alpha_{2}+\frac{\alpha_{1}}{\alpha_{3}},\alpha_{3}+\frac{1}{\alpha_{3}}\Bigr).
\end{align*}
Finally, define
\begin{equation}\label{eq:level8-invariant}
\beta_{1}=\frac{\alpha_{1}\alpha_{2}}{\alpha_{3}}, \quad
\beta_{2}=\alpha_{1}\alpha_{3}+\frac{\alpha_{2}}{\alpha_{3}^{2}}.
\end{equation}
Clearly, $\beta_{1}$ and $\beta_{2}$ are fixed by $G=\<\sigma_{1},\sigma_{2}\>$.  It is easy to show that the fixed field by the group $G$ is given by
\[
k(\alpha_{1},\alpha_{2},\alpha_{3})^{G} = k(\beta_{1},\beta_{2}).
\]
Eliminating $\alpha_{1},\alpha_{2},\alpha_{3}$ from \eqref{eq:level-6-inhomogeneous} and \eqref{eq:level8-invariant}  by some calculations based on Groebner basis, we find that if we let
\[
X = 2 \beta_{1}, 
\quad
Y = \frac{2 \beta_{1}^2(4 \beta_{1}^3 - 1)}{\beta_{2}},
\]
they satisfy
\[
Y^{2} = X^{3} + 1,
\]
which is well-known model of the modular curve~$X(6)$.
Furthermore, we have 
\[
\beta_{1} = \frac{X}{2},\quad
\beta_{2} = \frac{X^2(X^3 - 2)}{4Y},
\]
and thus $k(\beta_{1},\beta_{2})=k(X,Y)$. 

On the curve $Y^{2}=X^{3}+1$, the actions of $\bar\rho\sltwo(0,1;-1,0)$ and $\bar\rho\sltwo(1,1;0,1)$ are given by
\begin{align*}
\bar\rho\tsltwo(0,1;-1,0)
&:(X,Y) \mapsto (2,-3) - (X,Y),
\\
\bar\rho\tsltwo(1,1;0,1)
&:(X,Y) \mapsto [-\omega](X,Y)=(\omega X,-Y),
\end{align*}
where the operation ``$-$'' in the first map is the group operation of $Y^{2}=X^{3}+1$, and the map $[\omega]$ is the complex multiplication of $Y^{2}=X^{3}+1$.
In terms of $a_{i}(\tau)=\theta_{i}^{(N)}(0,\tau)$, $X$ and $Y$ are expressed as follows:
\[
X = \frac{2a_{1}(\tau)a_{2}(\tau)}{a_{0}(\tau)a_{3}(\tau)}, \quad
Y = \frac{a_{0}(\tau)^{2}a_{1}(\tau)^{2}
-a_{2}(\tau)^{2}a_{3}(\tau)^{2}}
{a_{0}(\tau)^{2}a_{2}(\tau)^{2}
-a_{1}(\tau)^{2}a_{3}(\tau)^{2}}.
\]
Incidentally, $X$ and $Y$, which are modular functions on $\Gamma(6)$, can also be written by using the 
Dedekind eta function as follows:
\begin{align*} X&=\frac{\eta(2\tau)\eta(3\tau)^3}{\eta(\tau)\eta(6\tau)^3}=q^{-\frac13} + q^{\frac{2}{3}} + q^{\frac{5}{3}} 
- q^{\frac{8}{3}} - q^{\frac{11}{3}} + q^{\frac{17}{3}} + 2\, q^{\frac{20}{3}} -\cdots,
\\
Y&=\frac{\eta(2\tau)^4\eta(3\tau)^2}{\eta(\tau)^2\eta(6\tau)^4}=q^{-\frac12} + 2 q^{\frac{1}{2}} + q^{\frac{3}{2}} 
- 2 q^{\frac{7}{2}} - 2 q^{\frac{9}{2}} +  2 q^{\frac{11}{2}} + 4 q^{\frac{13}{2}} + \cdots,
\end{align*}
where $q=e^{2\pi i \tau}$. The function $Y$ has a simpler expression in terms of $a_i(\tau)=\theta_i^{(6)}(0,\tau)$ as  
\[ Y=\frac{a_0(\frac{\tau}{3})a_3(\frac{\tau}{3})}{a_0(\tau)a_3(\tau)}. \]

At the end of this section, we note the connection to the ``Hesse cubic'', that is the elliptic normal
curve of degree 3.  Let
\begin{align*}  3\mu&=X^2-\frac{2}{X} =\frac{Y^2-3}{X}\\
&=q^{-\frac23}+5q^{\frac43}-7q^{\frac{10}3}+3q^{\frac{16}3}+15q^{\frac{22}3}-32q^{\frac{28}3} 
+9q^{\frac{34}3}+\cdots.   
\end{align*}
We can check that $\mu(\tau/2)$ is a modular function for the group $\Gamma(3)$, 
and we have the relation 
\[ X_0^3+X_2^3+X_4^3=3\mu(\tau) X_0 X_2 X_4. \]
Actually, our theta functions $\theta_k^{(3)}(z,\tau)\ (k=0,1,2)$ for $N=3$
can be obtained from those for $N=6$ ($k=0,2,4$) by changing the variables $z\to z/2-1/4, \,\tau\to\tau/2$,
and thus the above equation gives the Hesse cubic family of elliptic curves with level 3 structure.
We refer the reader to \cite{KKNT} for the derivation of the Hesse cubic in the same line of the current paper.

\section{Level 8}\label{sec:level-8}

In the case $N=8$, the basis of $V_{0}$ and $V_{1}$ obtained by Theorem~\ref{thm:eq-N-even} is as follows:
\begin{equation}\label{eq:level-8}
\renewcommand{\arraystretch}{1.0}
\setlength{\arraycolsep}{2pt}
\begin{aligned}
&\begin{array}{r*{10}c}
V_{0}=\< &a_{1}^{2}X_{0}^{2} &+& a_{3}^{2}X_{4}^{2} 
&-& a_{0}^{2}X_{1}X_{7}& & &-& a_{4}^{2}X_{3}X_{5}, 
\\
&a_{2}^{2}X_{0}^{2} &+& a_{2}^{2}X_{4}^{2} 
& & &-& (a_{0}^{2}+a_{4}^{2})X_{2}X_{6}, &
\\
&a_{3}^{2}X_{0}^{2} &+& a_{1}^{2}X_{4}^{2} 
&-& a_{4}^{2}X_{1}X_{7} & & &-& a_{0}^{2}X_{3}X_{5} &\>,
\end{array}
\\
&\begin{array}{r*8c}
V_{1}=\< &a_{1}a_{2}X_{0}X_{1} &-& a_{0}a_{1}X_{2}X_{7} 
&-& a_{3}a_{4}X_{3}X_{6} &+& a_{2}a_{3}X_{4}X_{5},
\\
&a_{2}a_{3}X_{0}X_{1} &-& a_{3}a_{4}X_{2}X_{7} 
&-& a_{0}a_{1}X_{3}X_{6} &+& a_{1}a_{2}X_{4}X_{5} &\>.
\end{array}
\end{aligned}
\end{equation}
The bases of $V_{2k}$ and $V_{2k+1}$ are obtained by replacing $X_{i}$ by $X_{i+k}$ in $V_{0}$ and $V_{1}$ respectively.
By letting $z=j\tau/N$, $j=0,1,\dots,$ in \eqref{eq:level-8}, we obtain the following relations among~$a_{i}$:
\begin{equation}\label{eq:level-8-modular}
\left\{
\begin{aligned}
&a_{0}a_{4}(a_{0}^{2}+a_{4}^{2}) = 2a_{2}^{4},
\\
&a_{0}a_{4}(a_{1}^{2}+a_{3}^{2}) = 2a_{1}a_{3}a_{2}^{2},
\\
&a_{0}a_{2}a_{4}(a_{0}+a_{4})=2a_{1}^{2}a_{3}^{2},
\\
&a_{2}^{3}(a_{0}+a_{4}) = a_{1}a_{3}(a_{1}^{2}+a_{3}^{2}),
\\
&a_{1}a_{3}(a_{0}^{2}+a_{4}^{2}) = a_{2}^{2}(a_{1}^{2}+a_{3}^{2}),
\\
&a_{2}(a_{0}^{3} + a_{4}^{3}) = a_{1}^{4} + a_{3}^{4}.
\end{aligned}
\right.
\end{equation}
The curve in $\P^{4}$ defined by \eqref{eq:level-8-modular} turns out to be  an irreducible curve of genus~$41$, and this is a model of $X^{(8)}(16)$.

Dehomogenize \eqref{eq:level-8-modular} by letting
$\alpha_{0}={a_{0}}/{a_{2}}$, $\alpha_{1}={a_{1}}/{a_{2}}$, $\alpha_{3}={a_{3}}/{a_{2}}$, $\alpha_{4}={a_{4}}/{a_{2}}$.
By some calculations of Groebner basis, it turns out that in the function field $k(\alpha_{0},\alpha_{1},\alpha_{3},\alpha_{4})$, the following three equations are enough to generate the ideal generated by the above six equations:
\begin{equation}\label{eq:level8-rel}
\alpha_{0}\alpha_{4}(\alpha_{0}^{2}+\alpha_{4}^{2}) = 2,\quad 
\alpha_{0}+\alpha_{4} = \alpha_{1}\alpha_{3}(\alpha_{1}^{2}+\alpha_{3}^{2}),\quad 
\alpha_{1}\alpha_{3}(\alpha_{0}^{2}+\alpha_{4}^{2}) = \alpha_{1}^{2}+\alpha_{3}^{2}.
\end{equation}

Let $\sigma_{1}$ and $\sigma_{2}$ denote the automorphisms of $k(\alpha_{0},\alpha_{1},\alpha_{3},\alpha_{4})$ induced by $\bar\rho\tsltwo(1,8;0,1)$ and $\bar\rho\tsltwo(1,0;8,1)$, respectively.  We have
\begin{align*}
& \sigma_{1} : (\alpha_{0},\alpha_{1},\alpha_{3},\alpha_{4}) \mapsto 
(\alpha_{0},-\alpha_{1},-\alpha_{3},\alpha_{4}), \\
&\sigma_{2} : (\alpha_{0},\alpha_{1},\alpha_{3},\alpha_{4}) \mapsto 
(\alpha_{4},\alpha_{3},\alpha_{1},\alpha_{0}).
\end{align*}
It is easy to see that $\alpha$ and $\beta$ commute, and thus the group of automorphism $\<\alpha,\beta\>$ induced by $\alpha$ and $\beta$ is isomorphic to $(\Z/2\Z)^{2}$.  The fixed subfields by $\alpha$ and $\beta$ are 
\begin{align*}
k(\alpha_{0},\alpha_{1},\alpha_{3},\alpha_{4})^{\sigma_{1}}
&=k\Bigl(\alpha_{0},\alpha_{1}\alpha_{3},\frac{\alpha_{1}}{\alpha_{3}},\alpha_{4}\Bigr), \\
k(\alpha_{0},\alpha_{1},\alpha_{3},\alpha_{4})^{\sigma_{2}}
&=k\left(\alpha_{0}+\alpha_{4},\alpha_{1}+\alpha_{3}, (\alpha_{1}-\alpha_{3})(\alpha_{0}-\alpha_{4}),\frac{\alpha_{1}-\alpha_{3}}{\alpha_{0}-\alpha_{4}}\right).
\end{align*}
Finally, define
\[
\beta_{0}=\alpha_{0}+\alpha_{4},\quad
\beta_{1}=\alpha_{1}\alpha_{3},\quad
\beta_{3}=\frac{\alpha_{1}}{\alpha_{3}}+\frac{\alpha_{3}}{\alpha_{1}},\quad
\beta_{4}=\frac{\alpha_{1}\alpha_{3}(\alpha_{0}-\alpha_{4})}{(\alpha_{1}+\alpha_{3})(\alpha_{1}-\alpha_{3})}.
\]
Then, $\beta_{0}$, $\beta_{1}$, $\beta_{3}$, and $\beta_{4}$ are all fixed by $G=\<\sigma_{1},\sigma_{2}\>$.  It is easy to show that the fixed field by the group $G$ is given by

\[
k(\alpha_{0},\alpha_{1},\alpha_{3},\alpha_{4})^{G}
=k(\beta_{0},\beta_{1},\beta_{3},\beta_{4}).
\]
Further calculations show that the relations \eqref{eq:level8-rel} translates to the relations
\[
\beta_{0}=\beta_{1}\beta_{3}, \quad \beta_{3}=\frac{1}{\beta_{4}^{2}}, \quad
\beta_{1}^{4}=4\beta_{4}^{6}+\beta_{4}^{2}.
\]
This implies that
\[
k(\alpha_{0},\alpha_{1},\alpha_{3},\alpha_{4})^{G}
=k(\beta_{1},\beta_{4}), \quad \text{with } \beta_{1}^{4}=4\beta_{4}^{6}+\beta_{4}^{2}
\]
The genus of the curve defined by $\beta_{1}^{4}=4\beta_{4}^{6}+\beta_{4}^{2}$ is~$5$, which coincides with the genus of $X(8)$.  Thus, we conclude that an affine model of $X(8)$ is given by 
\[
X(8): \beta_{4}^{2}(1+4\beta_{4}^{2})=\beta_{1}^{4}.
\]
Recall that an equation of $X^{(4)}(8)$ is given by $ a^{(4)}_{0}a^{(4)}_{2}\bigl((a^{(4)}_{0})^{2} + (a^{(4)}_{2})^{2}\bigr) = 2(a^{(4)}_{1})^{4}$ (see \eqref{eq:X4_8}).
We have a two-to-one map 
\[
X(8)\to X^{(4)}(8); \quad
(\beta_{1},\beta_{4})\mapsto
\bigl(a^{(4)}_{0}:a^{(4)}_{2}:a^{(4)}_{4}\bigr)
=(1:\beta_{1}:2\beta_{4}^{2}).
\]

The modular functions $\beta_1$ and $\beta_4$ on $\Gamma(8)$ can be written in terms of the Dedekind eta function as follows:
\begin{align*} 
\beta_1&=\frac{\eta(2\tau)^4\eta(8\tau)^2}{\eta(\tau)\eta(4\tau)^5}=q^{\frac{1}{8}} + q^{\frac{9}{8}} - 2 q^{\frac{17}{8}} - q^{\frac{25}{8}}
+ 4 q^{\frac{33}{8}} + 2 q^{\frac{41}{8}} -  7 q^{\frac{49}{8}}  -\cdots,,
\\
\beta_4&=-\frac{\eta(2\tau)\eta(8\tau)^2}{\eta(4\tau)^3}=q^{\frac{1}{4}} - q^{\frac{9}{4}} + 2 q^{\frac{17}{4}} - 3 q^{\frac{25}{4}} + 4 q^{\frac{33}{4}} - 
 6 q^{\frac{41}{4}} + 9 q^{\frac{49}{4}}- \cdots.
\end{align*}
They also have neat expressions in terms of our theta series:
\[ 
\beta_1=\frac{a_2(\frac{\tau}{2})a_2(2\tau)}{a_2(\tau)^2},\quad \beta_4=\frac{a_2(2\tau)}{a_2(\tau)}. 
\]

\appendix

\section{Hurwitz's immersion}\label{appendix}

Here, we describe the connections and differences between Hurwitz's $\sigma$-functions and our theta
functions in detail.

Hurwitz \cite{Hurwitz} uses the Weierstrass $\sigma$-function to construct an immersion of an elliptic curve into projective space. The $\sigma$-function is defined with respect to a fundamental pair of periods $\omega _{1},\omega _{2}\in \C$. Replacing $\omega_{2}$ by $-\omega_{2}$ if necessary, we may assume $\Im\omega_{2}/\omega_{1}>0$.  So, if we define $\tau=\omega_{2}/\omega_{1}$, $\tau$ is a point in the upper half plane $\H$.

Let $\Lambda$ be the lattice $\Lambda =\{m\omega _{1}+n\omega _{2}\,\,|\,\,m,n\in \Z \}$, and $\Lambda^{*}$ denotes the set of nonzero elements of $\Lambda$. The Weierstrass sigma function $\sigma(u)$ associated with $\Lambda \subset \C$ is defined by
\[
\sigma(u)=u\prod_{w\in\Lambda^{*}}
\Bigl(1-\tfrac{u}{w}\Bigr) e^{\frac{u}{w}+\frac{1}{2}(\frac{u}{w})^2},
\]
The function $\sigma$ is a quasiperiodic holomorphic function having a simple zero at each of the points in $\Lambda$.  Here, ``quasiperiodic'' means that $\sigma(u)$ satisfies the identities $\sigma(u+\omega_{k})
=\exp\bigl({\eta_{k}\bigl(u+\tfrac{\omega_{k}}{2}\bigr)+\pi i}\bigr)\sigma(u)$ \ ($k=1,2$), where $\eta_{1}$, $\eta_{2}$ satisfies the identity
$\eta_{1}\omega_{2} - \eta_{2}\omega_{1}=2\pi i$.
We sometimes write $\sigma(u\mid \omega _{1},\omega _{2})$ instead of $\sigma(u)$ to make clear that it depends on $\omega _{1}$ and~$\omega _{2}$.

A ``shift'' of $\sigma$ by $a\omega_{1}+b\omega_{2}$ is defined by 
\begin{multline}\label{eq:sigma-ab}
\sigma_{a,b}(u\mid \omega_{1},\omega_{2})
\\
=\exp\Bigl((a\eta_{1}+b\eta_{2})\bigl(u-\tfrac{a\omega_{1}+b\omega_{2}}{2}\bigr)\Bigr)
\cdot \sigma(u - a\omega_{1} - b\omega_{2}\mid \omega_{1},\omega_{2}).
\end{multline}
Clearly, $\sigma_{a,b}(u)$ has simple zeros at $u\equiv a\omega_{1}+b\omega_{2}\mod \Lambda$.

Let $N$ be an integer, and consider the overlattice $\Lambda_{N}=\left\<\tfrac{\omega_{1}}{N},\omega_{2}\right\>\supset \Lambda$.
Hurwitz defines, for any integer $k$ 
\[
X_{k}(u)=\mu_{k}\cdot \exp({-G_{1}u^{2}})\cdot
\sigma_{\epsilon,\epsilon+\frac{k}{N}}
\bigl(u\,\big|\,\tfrac{\omega_{1}}{N},\omega_{2}\bigr),
\]
where $\epsilon =0$ if $N$ odd, and $\epsilon =\tfrac{1}{2}$ if $N$ even.
The quantity $G_{1}$ is given by the formula
$G_{1} = \frac{N\bar\eta_{1}-N\eta_{1}}{2\omega_{1}}
=\frac{\bar\eta_{2}-N\eta_{2}}{2\omega_{2}}$, 
where $\bar\eta_{1}$ and $\bar\eta_{2}$ denote the values that $\eta_{1}$ and $\eta_{2}$ take when $\omega_{1}$ is replaced by $\omega_{1}/{N}$.
Finally, Hurwitz defines  $\mu_{k}$ by $\mu_{k} =
e^{-\tfrac{\pi i}{2N}k}\cdot e^{-\tfrac{5\pi i}{4}}\cdot\mu$
 if $N$ even, and 
$(-1)^{k}\cdot \mu$ if $N$ odd, where $\mu$ is an arbitrary constant.

The function $X_{k}(u)$ has $N$ simple zeros within the fundamental domain $\{r\omega_{1}+s\omega_{2}\mid 0\le r,s<1\}$ of $\Lambda$. They are 
\begin{equation}\label{eq:sigma-zeros}
\begin{array}{cll}
\tfrac{m}{N}\omega_{1}+\tfrac{k}{N}\omega_{2}, & m=0,1,\dots,N-1,
&\ \text{ if $N$ is odd,} \\[\medskipamount]
\bigl(\tfrac{1}{2N} + \tfrac{m}{N}\bigr)\omega_{1}
+\bigl(\tfrac{1}{2} + \tfrac{k}{N}\bigr)\omega_{2}, & m=0,1,\dots,N-1,
&\ \text{ if $N$ is even.}
\end{array}
\end{equation}
Hurwitz then shows, among other things, the following relation for $\lambda_{1}$, $\lambda_{2}\in \Z$.
\begin{multline*}
X_{k}\Bigl(u+\tfrac{\lambda_{1}\omega_{1}+\lambda_{2}\omega{2}}{N}\Bigr)
=(-1)^{N(\lambda_{1}+\lambda_{2})}\cdot
\exp\Bigl(-2\pi i\Bigl(\tfrac{k\lambda_{1}}{N}+\tfrac{\lambda_{1}\lambda_{2}}{2N}\Bigr)\Bigr)
\\
\times
\exp\Bigl((\lambda_{1}\eta_{1}+\lambda_{2}\eta_{2})\Bigl(u+\tfrac{\lambda_{1}\omega_{1}+\lambda_{2}\omega_{2}}{2N}\Bigr)\Bigr)\cdot
X_{k-\lambda_{2}}(u)
\end{multline*}     
In particular, if $\lambda_{1}=0$ and $\lambda_{2}=k$, we have
\begin{equation}\label{eq:Hurwitz-0-k}
X_{k}(u)=(-1)^{Nk}\cdot
\exp\Bigl(k\eta_{2}\bigl(u-\tfrac{k\omega_{2}}{2N}\bigr)\Bigr)\cdot
X_{0}\bigl(u-\tfrac{k\omega_{2}}{N}\bigr).
\end{equation}

On the other hand, we can show using Proposition~\ref{prop:theta-basic2} (4) that our function $\theta_{k}(z)
=\theta_{(\frac{1}{2}-\frac{k}{N},\frac{N}{2})}(Nz,N\tau)$ satisfies
\begin{equation}\label{eq:Theta-0-k}
\theta_{k}(z) = (-1)^{k}\cdot
\exp\Bigl(-2\pi i k\bigl(z-\tfrac{k\tau}{2N}\bigr)\Bigr)\cdot
\theta_{0}\bigl(z-\tfrac{k\tau}{N}\bigr).
\end{equation}
Using \eqref{eq:Hurwitz-0-k} and \eqref{eq:Theta-0-k}, we can deduce the relation between $X_{k}(u)$ and $\theta_{k}(z)$ for any $k$ once we establish the relation between Hurwitz's $X_{0}(u)$ and $\theta_{0}(z)$.

Define $\varphi_{\epsilon,\epsilon}(u)$ by
\[
\varphi_{\epsilon,\epsilon}(u) = 
\exp\Bigl(\tfrac{-N\bar\eta_{1}}{2\omega_{1}}u^{2}\Bigr)\cdot
\sigma_{\epsilon,\epsilon}
\bigl(u\,\big|\,\tfrac{\omega_{1}}{N},\omega_{2}\bigr).
\]
Then, we have $X_{0}(u)=\mu_{0}\cdot 
\exp\Bigl(\tfrac{N\eta_{1}}{2\omega_{1}}u^{2}\Bigr)
\cdot \varphi_{\epsilon,\epsilon}(u)$.
We then compute $\varphi_{\epsilon,\epsilon}(u+\omega_{1}/N)$ and $\varphi_{\epsilon,\epsilon}(u+\omega_{2})$ using the formula \eqref{eq:sigma-ab},
\begin{align*}
&\varphi_{\epsilon,\epsilon}\bigl(u+\tfrac{\omega_{1}}{N}\bigr)
=-\exp(-2\pi i \epsilon)\cdot\varphi_{\epsilon,\epsilon}(u),
\\
&\varphi_{\epsilon,\epsilon}\bigl(u+\omega_{2}\bigr)
=-\exp\Bigl(-2\pi i \bigl(\tfrac{Nu}{\omega_{1}}
	+\tfrac{N\omega_{2}}{2\omega_{1}}\bigr)+2\pi i \epsilon\Bigr)
\cdot\varphi_{\epsilon,\epsilon}(u).
\end{align*}
Let $\tau=\omega_{2}/\omega_{1}$ and $u=\omega_{1}z$, and define 
\(
\Phi_{\epsilon_{1},\epsilon_{2}}(z)
=\varphi_{\epsilon,\epsilon}(\omega_{1}z).
\)
Then, we have
\begin{equation}\label{eq:Phi-ee}
\begin{aligned}
&\Phi_{\epsilon,\epsilon}\bigl(z+\tfrac{1}{N}\bigr)
=-\exp(-2\pi i \epsilon)
\cdot
\Phi_{\epsilon,\epsilon}(z),
\\
&\Phi_{\epsilon,\epsilon}(z+\tau)
=-\exp\Bigl(-2\pi i \bigl(Nz+\tfrac{N\tau}{2}\bigr)
	+2\pi i \epsilon\Bigr)
\cdot\Phi_{\epsilon,\epsilon}(z).
\end{aligned}
\end{equation}

\subsection{Odd case}
By Proposition~\ref{prop:theta-basic2} (2) and (3), $\theta_{0}(z)$ satisfies
\begin{align*}
&\theta_{0}\bigl(z+\tfrac{1}{N}\bigr)
=-\theta_{0}(z),
\\
&\theta_{0}(z+\tau)
=
-\exp\Bigl(-2\pi i \bigl(Nz+\tfrac{N\tau}{2}\bigr)\Bigr)\cdot
\theta_{0}(z),
\end{align*}
which is exactly the same transformation rules \eqref{eq:Phi-ee} for $\epsilon=0$.  Thus,  if $N$ is odd, the ratio $\Phi_{0,0}(z)/\theta_{0}(z)$ is a doubly periodic function of periods $1/N$ and $\tau$.
Since the function $\Phi_{0,0}(z)/\theta_{0}(z)$ has at most one simple pole in the fundamental domain, it must be a constant~$C$.  The constant $C$ is obtained by L'H\^opital's rule as follows
\[
C=\lim_{z\to0}\frac{\Phi_{0,0}(z)}{\theta_{0}(z)}
=\frac{\omega_{1}\sigma_{0,0}'(0)}{N\theta_{(\frac{1}{2},\frac{N}{2})}'(0,N\tau)}
=\frac{\omega_{1}}{N\theta_{(\frac{1}{2},\frac{N}{2})}'(0,N\tau)}.
\]
Thus, if $N$ is odd, then
\(
X_{0}(\omega_{1}z)=
C\cdot\mu_{0}\cdot 
\exp\Bigl(\tfrac{N\eta_{1}\omega_{1}}{2}z^{2}\Bigr)
\cdot \theta_{0}(z).
\)
The ratio of $X_{k}(u)$ and $\theta_{k}(z)$ can be calculated using \eqref{eq:Hurwitz-0-k} and Proposition~\ref{prop:theta-basic2} (4),
\[
\frac{X_{k}(\omega_{1}z)}{\theta_{k}(z)}
=\frac{\exp\Bigl(k\eta_{2}\bigl(\omega_{1}z-\tfrac{k\omega_{2}}{2N}\bigr)\Bigr)}
{\exp\Bigl(-2\pi i k\bigl(z-\tfrac{k\tau}{2N}\bigr)\Bigr)}\cdot
\frac{X_{0}\bigl(\omega_{1}z-\tfrac{k\omega_{2}}{N}\bigr)}
{\theta_{0}\bigl(z-\tfrac{k\tau}{N}\bigr)}
=C\cdot\mu_{0}\cdot 
\exp\Bigl(\tfrac{N\eta_{1}\omega_{1}}{2}z^{2}\Bigr).
\]
This shows that two immersions by $X_{k}(u)$ and $\theta_{k}(z)$ coincide with each other.

\subsection{Even case}
If $N$ is even, the zeros of Hurwitz's $X_{0}(u)$ and the zeros of $\theta_{0}(u/\omega_{1})$ are off by $\frac{1}{2}\omega_{2}$, and so the two immersions differ.
We thus try to modify the definition of $\theta_{k}^{(N)}$ to shift the zeros.  It turns out that if we define an immersion using the function
\[
\theta^{*}_{k}(z)=(-1)^{k}\theta_{(\frac{k}{N},\frac{1}{2})}(Nz,N\tau) 
\]
instead of $\theta_{k}(z)=\theta_{(\frac{1}{2}-\frac{k}{N},\frac{N}{2})}(Nz,N\tau)$, the immersion coincides with that of Hurwitz's.
To see this, compare $X_{0}(u)$ and $\theta_{0}^{*}(z)$. In this case $\Phi_{\frac{1}{2},\frac{1}{2}}(z)=\varphi_{\frac{1}{2},\frac{1}{2}}(\omega_{1}z)$ satisfy
\begin{align*}
&\Phi_{\frac{1}{2},\frac{1}{2}}\bigl(z+\tfrac{1}{N}\bigr)
=-\exp(\pi i)\cdot\Phi_{\frac{1}{2},\frac{1}{2}}(z)
=\Phi_{\frac{1}{2},\frac{1}{2}}(z),
\\
&\Phi_{\frac{1}{2},\frac{1}{2}}(z+\tau)
=\exp\Bigl(-2\pi i \bigl(Nz+\tfrac{N\tau}{2}\bigr)\Bigr)
\cdot\Phi_{\frac{1}{2},\frac{1}{2}}(z).
\end{align*}
On the other hand, by Proposition~\ref{prop:theta-basic}, we can show 
\begin{align*}
&\theta_{0}^{*}\bigl(z+\tfrac{1}{N}\bigr)
=\theta_{(0,\frac{N}{2})}(Nz+1,N\tau)
=\theta_{0}^{*}(z),
\\
&\begin{aligned}
\theta_{0}^{*}(z+\tau)
&=\theta_{(0,\frac{N}{2})}\bigl(Nz+N\tau,N\tau\bigr)
=\exp\Bigl(-2\pi i \bigl(Nz+\tfrac{N\tau}{2}\bigr)\Bigr)\cdot
\theta_{0}^{*}(z).
\end{aligned}
\end{align*}
These identities imply that if $N$ is even, the ratio $\Phi(z)/\theta_{0}^{*}(z)$ is a doubly periodic function of periods $1/N$ and $\tau$. Since this function has at most one simple pole in the fundamental domaine, it must be a constant~$C^{*}$. Thus, if $N$ is even, then
$X_{0}(\omega_{1}z)
=
C^{*}\cdot\mu_{0}\cdot 
\exp\bigl(\tfrac{N\eta_{1}\omega_{1}}{2}z^{2}\bigr)
\cdot \theta_{0}^{*}(z)$.

By Proposition~6.2 (2) with values $p=0$, $q=N/2$, $r=-k/N$, we can show
\(
\theta_{k}^{*}(z) =
\exp\Bigl(-2\pi i k\bigl(z-\tfrac{k\tau}{2N}\bigr)\Bigr)\cdot
\theta_{0}^{*}\bigl(z-\tfrac{k\tau}{N}\bigr)
\).
Therefore, the ratio of the two functions becomes
\[
\frac{X_{k}(\omega_{1}z)}{\theta_{k}^{*}(z)}
=\frac{\exp\Bigl(k\eta_{2}\bigl(\omega_{1}z-\tfrac{k\omega_{2}}{2N}\bigr)\Bigr)}
{\exp\Bigl(-2\pi i k\bigl(z-\tfrac{k\tau}{2N}\bigr)\Bigr)}\cdot
\frac{X_{0}\bigl(\omega_{1}z-\tfrac{k\omega_{2}}{N}\bigr)}
{\theta_{0}^{*}\bigl(z-\tfrac{k\tau}{N}\bigr)}
=C^{*}\cdot\mu_{0}\cdot 
\exp\Bigl(\tfrac{N\eta_{1}\omega_{1}}{2}z^{2}\Bigr).
\]
This implies that two immersions by $X_{k}(u)$ and $\theta_{k}^{*}(z)$ coincide with each other.

Alternatively, we may modify Hurwitz's definition to make it coincides with ours.  To do so, we use $\sigma_{\frac{1}{2},0}
\bigl(u\,\big|\,\tfrac{\omega_{1}}{N},\omega_{2}\bigr)$, which has the same set of simple zeros with $\theta_{0}^{(N)}(z,\tau)$.

\def\arXiv#1{arXiv:\href{http://arXiv.org/abs/#1}{#1}}
\providecommand{\bysame}{\leavevmode\hbox to3em{\hrulefill}\thinspace}
\providecommand{\MR}{\relax\ifhmode\unskip\space\fi MR }
\providecommand{\MRhref}[2]{%
  \href{http://www.ams.org/mathscinet-getitem?mr=#1}{#2}
}
\providecommand{\href}[2]{#2}

\end{document}